\crefname{enumi}{item}{items}
\crefname{equation}{}{}
\crefname{subsection}{Subsection}{Subsections}
\theoremstyle{plain}
\newtheorem{theorem}{Theorem}[section]
\newtheorem{lemma}[theorem]{Lemma}
\newtheorem{prop}[theorem]{Proposition}
\newtheorem{cor}[theorem]{Corollary}
\newtheorem{setting}[theorem]{Setting}
\theoremstyle{definition}
\DeclareMathAlphabet{\mathpzc}{OT1}{pzc}{m}{it}
\DeclareMathAlphabet{\mathscr}{LS1}{stixscr}{m}{n}
\newcommand{\E}{\mathbb{E}}
\renewcommand{\P}{\mathbb{P}}
\newcommand{\R}{\mathbb{R}}
\newcommand{\N}{\mathbb{N}}
\newcommand{\bbF}{\mathbb{F}}
\newcommand{\bbX}{\mathbb{X}}
\newcommand{\bbY}{\mathbb{Y}}
\newcommand{\w}[1]{\mathfrak{w}^{#1}}
\renewcommand{\b}[1]{\mathfrak{b}^{#1}}
\renewcommand{\v}[1]{\mathfrak{v}^{#1}}
\renewcommand{\c}[1]{\mathfrak{c}^{#1}}
\newcommand{\smallsum}{\textstyle\sum}
\newcommand{\with}{\curvearrowleft}
\newcommand{\cB}{\mathcal{B}}
\newcommand{\cF}{\mathcal{F}}
\newcommand{\cG}{\mathcal{G}}
\newcommand{\cL}{\mathcal{L}}
\newcommand{\cX}{\mathcal{X}}
\newcommand{\cY}{\mathcal{Y}}
\newcommand{\bfa}{\mathbf{a}}
\newcommand{\bfe}{\mathbf{e}}
\newcommand{\bfA}{\mathbf{A}}
\newcommand{\bfL}{\mathbf{L}}
\newcommand{\scrC}{\mathscr{C}}
\newcommand{\scrL}{\mathscr{L}}
\newcommand{\scrN}{\mathscr{N}}
\newcommand{\fC}{\mathfrak{C}}
\newcommand{\fD}{\mathfrak{D}}
\newcommand{\fG}{\mathfrak{G}}
\newcommand{\fL}{\mathfrak{L}}
\newcommand{\fM}{\mathfrak{M}}
\newcommand{\fb}{\mathfrak{b}}
\newcommand{\fc}{\mathfrak{c}}
\newcommand{\fd}{\mathfrak{d}}
\newcommand{\fm}{\mathfrak{m}}
\newcommand{\fn}{\mathfrak{n}}
\newcommand{\fv}{\mathfrak{v}}
\newcommand{\fw}{\mathfrak{w}}
\DeclarePairedDelimiter{\norm}{\lVert}{\rVert}
\DeclarePairedDelimiter{\abs}{\lvert}{\rvert}
\DeclarePairedDelimiter{\rbr}{(}{)}
\DeclarePairedDelimiter{\br}{[}{]}
\DeclarePairedDelimiter{\cu}{\{}{\}}
\newcommand{\Rect}{\mathfrak{R}}
\renewcommand{\d}{ \mathrm{d}}
\newcommand{\indicator}[1]{\mathbbm{1}_{\smash{#1}}}
\newcommand{\realization}[1] {\mathscr{N} ^{ #1  }}
\newcommand{\realapprox}[2]{\mathscr{N} ^{#1}_ {#2}}
\newcommand{\width}{H}
\newcommand{\supgn}{\mathfrak{g}}
\NewDocumentCommand{\nobs}{}{
  \bool_if:nTF { \g_noteobserve } {
    \bool_gset_false:N \g_noteobserve 
    note~
  } {
    \bool_gset_true:N \g_noteobserve 
    observe~
  }
}
\NewDocumentCommand{\Nobs}{}{
  \bool_if:nTF { \g_noteobserve } {
    \bool_gset_false:N \g_noteobserve 
    Note~
  } {
    \bool_gset_true:N \g_noteobserve 
    Observe~
  }
}
\NewDocumentCommand{\cfadd}{ m }
{
  \seq_if_in:NnF \g_cflist_loaded { #1 } {
    \seq_if_in:NnF \g_cflist_pending { #1 } {
      \seq_gput_right:Nn \g_cflist_pending { #1 }
    }
  }
}
\NewDocumentCommand{\cfconsiderloaded}{ m }{
  \seq_gput_right:Nn \g_cflist_loaded {#1}
}
\NewDocumentCommand{\cfremove}{ m }
{
  \seq_gremove_all:Nn \g_cflist_pending { #1 }
}
\NewDocumentCommand{\cfload}{ o }
{
  \seq_if_empty:NTF \g_cflist_pending {\unskip} {
    (cf.\ \cref{\seq_use:Nn \g_cflist_pending {,}})\IfValueTF{#1}{#1~}{\unskip}
    \seq_gconcat:NNN \g_cflist_loaded \g_cflist_loaded \g_cflist_pending
    \seq_gclear:N \g_cflist_pending
  }
}
\NewDocumentCommand{\cfclear} {} {
  \seq_gclear:N \g_cflist_loaded
  \seq_gclear:N \g_cflist_pending
}
\NewDocumentCommand{\cfout}{ o }
{
  \seq_if_empty:NTF \g_cflist_pending {\unskip} {
    (cf.\ \cref{\seq_use:Nn \g_cflist_pending {,}})\IfValueTF{#1}{#1~}{\unskip}
    \seq_gclear:N \g_cflist_pending
  }
}
\NewDocumentCommand{\ifnocf} { m } {
  \seq_if_empty:NT \g_cflist_pending { #1 }
}
\NewDocumentEnvironment{cproof}{m}
{\begin{proof}[Proof of \cref{#1}]}%
{\noindent The proof of \cref{#1} is thus complete.
\end{proof}}
\NewDocumentEnvironment{cproof2}{m}
{\begin{proof}[Proof of \cref{#1}]}%
{\noindent This completes the proof of \cref{#1}.
\end{proof}}
\definecolor{codegreen}{rgb}{0,0.6,0}
\definecolor{codegray}{rgb}{0.5,0.5,0.5}
\definecolor{codepurple}{rgb}{0.58,0,0.82}
\definecolor{backcolour}{rgb}{0.95,0.95,0.92}
\lstdefinestyle{mystyle}{
    backgroundcolor=\color{backcolour},   
    commentstyle=\color{codegreen},
    keywordstyle=\color{magenta},
    numberstyle=\tiny\color{codegray},
    stringstyle=\color{codepurple},
    basicstyle=\ttfamily\footnotesize,
    breakatwhitespace=false,         
    breaklines=true,                 
    captionpos=b,                    
    keepspaces=true,                 
    numbers=left,                    
    numbersep=5pt,                  
    showspaces=false,                
    showstringspaces=false,
    showtabs=false,                  
    tabsize=2
}
\title{\vspace{-0.5cm}
A proof of convergence for stochastic gradient descent\\
in the training of artificial neural networks with\\
ReLU activation for constant target functions}
\author{
Arnulf Jentzen$^{1, 2}$ and
Adrian Riekert$^3$
\bigskip
\\
\small{$^1$ Faculty of Mathematics and Computer Science, University of M{\"u}nster,}
\vspace{-0.1cm}\\
\small{M{\"u}nster, Germany, e-mail: \texttt{ajentzen}\textcircled{\texttt{a}}\texttt{uni-muenster.de}}
\smallskip
\\
\small{$^2$ School of Data Science and Shenzhen Research Institute of Big Data,}
\vspace{-0.1cm}\\
\small{The Chinese University of Hong Kong, Shenzhen, China, e-mail: \texttt{ajentzen}\textcircled{\texttt{a}}\texttt{cuhk.edu.cn}}
\smallskip
\\
\small{$^3$ Faculty of Mathematics and Computer Science, University of M{\"u}nster,}
\vspace{-0.1cm}\\
\small{M{\"u}nster, Germany, e-mail: \texttt{ariekert}\textcircled{\texttt{a}}\texttt{uni-muenster.de}}
}
\date{\today}
\begin{document}

\maketitle
\begin{abstract}
In this article we study the stochastic gradient descent (SGD) optimization method 
in the training of fully-connected feedforward artificial neural networks 
with ReLU activation. The main result of this work proves that the risk of 
the SGD process converges to zero if the target function under consideration 
is constant. In the established convergence result the considered artificial 
neural networks consist of one input layer, one hidden layer, and one output 
layer (with $d \in \N$ neurons on the input layer, $\width \in \N$ neurons on the hidden layer, and one neuron on the output layer). The learning rates of the SGD process are assumed to be sufficiently small 
and the input data used in the SGD process to train the artificial neural networks 
is assumed to be independent and identically distributed.
\end{abstract}

\tableofcontents

\section{Introduction}
Artificial neural networks (ANNs) are these days widely used in several real world applications, including, e.g., text analysis, image recognition, autonomous driving, and game intelligence.
Stochastic gradient descent (SGD) optimization methods provide
the standard schemes which are used for the training 
of ANNs. Nonetheless, until today, 
there is no complete mathematical analysis 
in the scientific literature 
which rigorously explains the success of SGD optimization methods 
in the training of ANNs in numerical simulations.

 However, there are several interesting directions of research regarding the mathematical analysis of SGD optimization methods in the training of ANNs.
 The convergence of SGD optimization schemes for convex target functions is well understood, cf., e.g., \cite{BachMoulines2013, BachMoulines2011, Nesterov1983, Nesterov2014, Rakhlin2012} and the references mentioned therein.
For abstract convergence results for SGD optimization methods without convexity assumptions we refer, e.g., to \cite{AkyildizSabanis2021, BertsekasTsitsiklis2000, DereichKassing2021, DereichMuller_Gronbach2019, FehrmanGessJentzen2020, KarimiMiasojedowMoulinesWai2019arXiv, LeiHuLiTang2020, LovasSabanis2020} and the references mentioned therein.
We also refer, e.g., to \cite{CheriditoJentzenRossmannek2020, JentzenvonWurstemberger2020, LuShinSuKarniadakis2020, Shamir2019} and the references mentioned therein for lower bounds and divergence results for SGD optimization methods.
 For more detailed overviews and further references on SGD optimization schemes we refer, e.g., to \cite{Bottou2018optimization}, \cite[Section 1.1]{FehrmanGessJentzen2020}, \cite[Section 1]{JentzenKuckuckNeufeldVonWurstemberger2018arxiv}, and \cite{Ruder2017overview}.
The effect of random initializations in the training of ANNs was studied, e.g., in \cite{BeckJentzenKuckuck2019arXiv, Hanin2018, HaninRolnick2018, JentzenWelti2020arxiv, LuShinSuKarniadakis2020, ShinKarniadakis2020} and the references mentioned therein.
Another promising branch of research has investigated the convergence of SGD for the training of ANNs in the so-called overparametrized regime, where the number of ANN parameters has to be sufficiently large. In this situation SGD can be shown to converge to global minima with high probability, see, e.g., \cite{ChizatBach2018, DuZhaiPoczosSingh2018arXiv, EMaWu2020, JentzenKroeger2021, LiLiang2019, EChaoWu2018} for the case of shallow ANNs and see, e.g., \cite{AllenzhuLiLiang2019, AllenzhuLiSong2019, DuLeeLiWangZhai2019,  SankararamanDeXuHuangGoldstein2020, ZouCaoZhouGu2019} for the case of deep ANNs. These works consider the empirical risk, which is measured with respect to a finite set of data. 

Another direction of research is to study the true risk landscape of ANNs and characterize the saddle points and local minima, which was done in Cheridito et al.~\cite{Cheridito2021landscape} for the case of affine target functions. The question under which conditions gradient-based optimization algorithms cannot converge to saddle points was investigated, e.g., in \cite{LeePanageasRecht2019, LeeJordanRecht2016, PanageasPiliouras2017, Panageas2019firstorder} for the case of deterministic GD optimization schemes and, e.g., in \cite{Ge2015} for the case of SGD optimization schemes.

In this work we study the plain vanilla SGD optimization method in the training of 
fully-connected feedforward ANNs with ReLU activation in the special situation 
where the target function is a constant function. 
The main result of this work, \cref{theorem:sgd} in \cref{subsection:sgd:convergence}, 
proves that the risk of the SGD process converges to zero in the almost sure 
and the $ L^1 $-sense if the learning rates are sufficiently small but fail to be summable. 
We thereby extend the findings in our previous article Cheridito et al.~\cite{CheriditoJentzenRiekert2021} 
by proving convergence for the SGD optimization method instead of merely for the deterministic GD optimization method, by allowing the gradient to be defined as the limit of the gradients of appropriate general approximations of the ReLU activation function instead of a specific choice for the approximating sequence,
by allowing the learning rates to be non-constant and varying over time, 
by allowing the input data to multi-dimensional,
and by allowing the law of the input data 
to be an arbitrary probability distribution on $[a,b]^d$ with $a \in \R$, $b \in (a, \infty)$, $d \in \N$
instead of the continuous uniform distribution on $ [0,1] $.

To illustrate the findings of this work in more details, we present 
in \cref{theo:intro} below a special case of \cref{theorem:sgd}.
Before we present below the rigorous mathematical statement of \cref{theo:intro}, 
we now provide an informal description of the statement of \cref{theo:intro} 
and also briefly explain some of the mathematical objects 
that appear in \cref{theo:intro} below. 

In \cref{theo:intro} we study the SGD optimization method in the training of fully-connected 
feedforward artificial neural networks (ANNs) with three layers: the input layer, one hidden 
layer, and the output layer. The input layer consists of $ d \in \N = \{ 1, 2, ... \} $ 
neurons (the input is thus $d$-dimensional), the hidden layer consists of  $\width \in \N$
neurons (the hidden layer is thus $\width$-dimensional), and the output layer consists of 1 neuron (the output is thus one-dimensional). 
In between the $ d $-dimensional input layer and the $ \width $-dimensional hidden layer 
an affine linear transformation from $ \R^d $ to $ \R^{ \width } $ 
is applied with $ \width d + \width $ real parameters 
and in between the $ \width $-dimensional hidden layer and the $ 1 $-dimensional output 
layer an affine linear transformation from $ \R^{ \width } $ to $ \R^{ 1 } $ 
is applied with $ \width + 1 $ real parameters. 
Overall the considered ANNs are thus described through
\begin{equation} 
  \fd = ( \width d + \width ) + ( \width + 1 ) = \width d + 2 \width + 1 
\end{equation}
real parameters. In \cref{theo:intro} we assume that the target function 
which we intend to learn is a constant 
and the real number $ \xi \in \R $ in \cref{theo:intro} specifies this constant. 
The real numbers $ a \in \R $, $ b \in (a,\infty) $ in \cref{theo:intro} specify 
the set in which the input data for the training process lies 
in the sense that we assume that the input data is given through $ [a,b]^d $-valued 
i.i.d.\ random variables. 

In \cref{theo:intro} we study the SGD optimization method in the training of ANNs with the rectifier function $\R \ni x \mapsto \max\{ x, 0 \} \in \R$ as the activation function. This type of activation is often also referred to as rectified linear unit activation (ReLU activation). The ReLU activation function $\R \ni x \mapsto \max\{ x, 0 \} \in \R$ fails to be differentiable at the origin and the ReLU activation function can in general therefore not be used to define gradients of the considered risk function and the corresponding gradient descent process. 
In implementations, maybe the most common procedure to overcome this issue 
is to formally apply the chain rule as if all involved functions would be differentiable and to define the ``derivative'' of the ReLU activation function as the left derivative of the ReLU activation function. This is also precisely the way how SGD is implemented in {\sc TensorFlow} and we refer to \cref{subsection:python:code} for a short illustrative example {\sc Python} code for the computation of such generalized gradients of the risk function.

In this article we mathematically formalize this procedure (see \cref{eq:intro:relu:approx}, \cref{setting:sgd:eq:relu}, and \cref{emp:loss:differentiable:item1} in \cref{emp:loss:differentiable}) by employing appropriate continuously differentiable functions which approximate the ReLU activation function in the sense that the employed approximating functions converge to the ReLU activation function and that the derivatives of the employed approximating functions converge to the left derivative of the ReLU activation function. 
More specifically, in \cref{theo:intro} the function $\Rect_{ \infty } \colon \R \to \R$ is the ReLU activation function and the functions $\Rect_r \colon  \R \to \R,$ $r \in \N$, serve as continuously differentiable approximations for the ReLU activation function $\Rect_{ \infty }$. 
In particular, in \cref{theo:intro} we assume that for all $x \in \R$ it holds that 
$ \Rect_{ \infty }( x ) = \max \{ x , 0 \} $ and 
\begin{equation} \label{eq:intro:relu:approx}
 \limsup\nolimits_{r \to \infty}  \abs { \Rect _r ( x ) - \max\{ x, 0 \} } = \limsup\nolimits_{r \to \infty} \abs { (\Rect _r)' ( x ) - \indicator{(0, \infty)} ( x ) }  = 0.
\end{equation}
In \cref{theo:intro} the realization functions associated to the considered ANNs are described through the functions $\scrN_r = ( \realapprox{\phi}{r} )_{\phi \in \R^\fd} \colon \R^\fd \to C ( \R^d , \R)$, $r \in \N \cup \{ \infty \}$. In particular, in \cref{theo:intro} we assume that for all $\phi = ( \phi_1, \ldots, \phi_\fd ) \in \R^\fd$, $x = ( x_1, \ldots, x_d ) \in \R^d$ we have that
\begin{equation}
\realapprox{\phi}{\infty} (x) = \phi_\fd + \smallsum_{i = 1}^\width \phi_{\width ( d+1 )  + i} \max \cu[\big]{ \phi_{ \width d + i} + \smallsum_{j = 1}^d \phi_{ (i - 1 ) d + j} x_j , 0 }
\end{equation}
(cf.\ \cref{theo:intro:eq:realization} below). 
The input data which is used to train the considered ANNs is provided 
through the random variables $X^{ n, m } \colon \Omega \to [a,b]^d$, $n, m \in \N_0$, 
which are assumed to be i.i.d.\ random variables. 
Here $(\Omega, \cF, \P)$ is the underlying probability space.

The function $\cL \colon \R^\fd \to \R$ in \cref{theo:intro} specifies 
the risk function associated to the considered supervised learning problem 
and, roughly speaking, for every neural network parameter $\phi \in \R^{ \fd }$ we have that the value $\cL ( \phi ) \in [0,\infty)$ of the risk function measures the error how well the realization function $\realapprox{\phi}{\infty} \colon \R^d \to \R$ of the neural network associated to $\phi$ approximates the target function $[a,b]^d \ni x \mapsto \xi \in \R$.

The sequence of natural numbers $ ( M_n )_{ n \in \N_0 } \subseteq \N $ describes the size of the mini-batches in the SGD process. 
The SGD optimization method is described through the SGD process $\Theta \colon \N_0 \times \Omega \to \R^\fd$ in \cref{theo:intro} and the real numbers $\gamma_n \in [0, \infty)$, $n \in \N_0$, specify the learning rates in the SGD process. The learning rates are assumed to be sufficiently small in the sense that 
\begin{equation}
  \sup\nolimits_{n \in \N_0} 
  \gamma_n 
  \leq 
  \rbr{ 5 + 5 \norm{\Theta_0 } }^{ - 2 }
  ( \max \{\abs{\xi}, \abs{a}, \abs{b}, d \} )^{ - 5 } 
\end{equation}
and the learning rates may not be summable and instead are assumed to satisfy $\sum_{k=0}^{ \infty } \gamma_k = \infty$. Under these assumptions \cref{theo:intro} proves that the true risk $\cL( \Theta_n )$ 
converges to zero in the almost sure and the $ L^1 $-sense 
as the number of gradient descent steps $ n \in \N $ increases to infinity. 
We now present \cref{theo:intro} and 
thereby precisely formalize the above mentioned paraphrasing comments.

\begin{samepage}
\begin{theorem} \label{theo:intro}
Let $d, \width, \fd \in \N$, $\xi , a  \in \R$, $b \in (a, \infty)$ satisfy $\fd = d\width + 2 \width + 1$,
let $\Rect _r \colon \R \to \R$, $r \in \N \cup \{ \infty\}$, satisfy for all $x \in \R$ that $ \rbr*{ \bigcup_{r \in \N } \{ \Rect _r \}  } \subseteq C^1 ( \R , \R)$, $\Rect _\infty ( x ) = \max \{ x , 0 \}$, and $\limsup_{r \to \infty}  \rbr*{ \abs { \Rect _r ( x ) - \Rect _\infty ( x ) } + \abs { (\Rect _r)' ( x ) - \indicator{(0, \infty)} ( x ) } } = 0$,
let $\scrN_ r = (\realapprox{\phi}{r})_{\phi \in \R^{\fd } } \colon \R^{\fd } \to C(\R ^d , \R)$, $r \in \N \cup \{ \infty \}$, 
satisfy for all $r \in \N \cup \{ \infty \}$, $\phi = ( \phi_1, \ldots, \phi_\fd) \in \R^{\fd}$, $x = (x_1, \ldots, x_d) \in \R^d$ that
\begin{equation} \label{theo:intro:eq:realization}
\realapprox{\phi}{r} (x) = \phi_\fd + \smallsum_{i = 1}^\width \phi_{\width ( d+1 )  + i} \Rect _r \rbr[\big]{ \phi_{ \width d + i} + \smallsum_{j = 1}^d \phi_{ (i - 1 ) d + j} x_j } ,
\end{equation}
let $(\Omega , \cF , \P)$ be a probability space,
let $X^{n , m}  \colon \Omega \to [a,b]^d$, $n, m \in \N_0$, be i.i.d.\ random variables,
let $\norm{ \cdot } \colon \R^\fd \to \R$ and $\cL  \colon \R^\fd \to \R$ satisfy for all $\phi =(\phi_1, \ldots, \phi_{\fd} ) \in \R^\fd$ that $\norm{\phi} = [ \sum_{i=1}^\fd \abs*{ \phi_i } ^2 ] ^{1/2}$ and $\cL  ( \phi) =  \E \br[\big]{( \realapprox{\phi}{\infty} ( X^{0,0}) - \xi  ) ^2 }$,
let $(M_n)_{n \in \N_0} \subseteq \N$,
let $\fL^n_r \colon  \R^{\fd} \times \Omega  \to \R$, $n \in \N_0$, $r \in \N \cup \{ \infty \}$, satisfy for all
$n \in \N_0$, $r \in \N \cup \{ \infty \}$, $\phi \in \R^{\fd}$, $\omega \in \Omega$ that 
\begin{equation} 
\fL^n_r ( \phi , \omega ) = \tfrac{1}{M_n} \smallsum_{m=1}^{M_n}( \realapprox{\phi}{r} ( X^{n,m} ( \omega )) - \xi  ) ^2,
\end{equation}
let $\fG^n  \colon \R^{\fd} \times \Omega \to \R^{\fd}$, $n \in \N_0$, satisfy for all
$n \in \N_0$, $\phi \in \R^\fd$, $ \omega \in  \{ \mathscr{w} \in \Omega \colon  ((\nabla_\phi \fL^n_r ) ( \phi , \mathscr{w} ) )_{r \in \N } \text{ is} \allowbreak\text{convergent} \}$ that $\fG^n ( \phi , \omega ) = \lim_{r \to \infty} (\nabla_\phi \fL^n_r ) ( \phi, \omega )$,
let $\Theta = (\Theta_n)_{n \in \N_0} \colon \N_0 \times \Omega \to \R^{ \fd }$
be a stochastic process, 
let $(\gamma_n)_{n \in \N_0} \subseteq [0, \infty)$,
assume that $\Theta_0$ and $( X^{n,m} )_{(n,m) \in ( \N_0 ) ^2}$ are independent,
and
assume for all $n \in \N_0$, $\omega \in \Omega$ that
$\Theta_{n+1} ( \omega) = \Theta_n (\omega) - \gamma_n \fG ^{n} ( \Theta_n (\omega), \omega)$,
$18 ( \max \{\abs{\xi}, \abs{a}, \abs{b}, d \} ) ^5 \gamma_n \leq \rbr{ 1 + \norm{\Theta_0 ( \omega ) } } ^{-2}$, and $\sum_{k = 0}^\infty \gamma_k = \infty$.
Then
\begin{enumerate} [label=(\roman*)]
\item \label{theo:intro:item1} there exists $\fC \in \R$ such that $\P  \rbr*{  \sup_{n \in \N_0}  \norm{ \Theta_n } \leq \fC  } = 1$,
    \item \label{theo:intro:item2} it holds that $\P  \rbr*{  \limsup_{n \to \infty} \cL  ( \Theta_n ) = 0  } = 1$, and
    \item \label{theo:intro:item3} it holds that $\limsup_{n \to \infty} \E [ \cL  ( \Theta_n ) ] = 0$.
\end{enumerate}
\end{theorem}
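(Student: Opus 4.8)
The plan is to first establish \cref{theo:intro:item1} (almost sure boundedness of the trajectory) and then, working on the full measure event on which $\sup_{n}\norm{\Theta_n}$ is finite, to prove an approximate descent inequality for the true risk $\cL(\Theta_n)$ which, combined with a coercivity identity special to constant target functions, forces $\cL(\Theta_n)\to 0$; \cref{theo:intro:item2} is then the almost sure assertion, and \cref{theo:intro:item3} follows from \cref{theo:intro:item1,theo:intro:item2} by bounded convergence (on that event $\cL(\Theta_n)$ is bounded by a fixed polynomial in $\sup_m\norm{\Theta_m}$, $\abs\xi$, $\abs a$, $\abs b$, $d$).

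\textbf{The algebraic backbone.} For each $\phi$ the random vectors $(\nabla_\phi\fL^n_r)(\phi,\cdot)$, $r\in\N$, converge in $L^1(\P)$ (being dominated by a polynomial in $\norm\phi$), so one gets a deterministic generalized gradient $\cG(\phi):=\E\br{\fG^n(\phi,\cdot)}=\lim_{r\to\infty}\nabla_\phi\E\br{(\realapprox{\phi}{r}(X^{0,0})-\xi)^2}$, independent of $n$. Decomposing $\phi$ into the hidden weights, hidden biases, output weights $(\phi_{\width(d+1)+i})_{i=1}^{\width}$, and output bias $\phi_\fd$, and using only $\max\{t,0\}=t\,\indicator{(0,\infty)}(t)$ (so no differentiation through the kink of $\Rect_\infty$ is needed), direct computations give
\begin{equation}\label{plan:eq1}
 2\,\cL(\phi)=\smallsum_{i=1}^{\width}\phi_{\width(d+1)+i}\,(\cG(\phi))_{\width(d+1)+i}-(\xi-\phi_\fd)\,(\cG(\phi))_\fd
\end{equation}
and, for all $n\in\N_0$ and $\omega\in\Omega$,
\begin{equation}\label{plan:eq2}
 \scp{\phi}{\fG^n(\phi,\omega)}{}=4\,\fL^n_\infty(\phi,\omega)+(2\xi-\phi_\fd)\,(\fG^n(\phi,\omega))_\fd .
\end{equation}
From \eqref{plan:eq1} and Cauchy--Schwarz one deduces the coercivity bound $\cL(\phi)\leq(\norm\phi+\abs\xi)\,\norm{\cG(\phi)}$, i.e.\ $\cL$ has no stationary points with positive value; this is where constancy of the target is essential. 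One also has the elementary estimates $\abs{(\fG^n(\phi,\omega))_\fd}\leq 2\,(\fL^n_\infty(\phi,\omega))^{1/2}$ and $\norm{\fG^n(\phi,\omega)}\leq\mathfrak{K}\,(1+\norm\phi)\,(\fL^n_\infty(\phi,\omega))^{1/2}$ (with $\mathfrak{K}$ depending only on $\max\{\abs a,\abs b,d\}$), together with $\E\br{\fL^n_\infty(\phi,\cdot)}=\cL(\phi)$, so in particular $\E\br{\norm{\fG^n(\phi,\cdot)}^2}\leq\mathfrak{K}^2(1+\norm\phi)^2\cL(\phi)$: the stochastic gradient and its fluctuations both vanish as the risk does.

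\textbf{Boundedness.} Since the hypothesis on the learning rates forces $\Theta_0$ to be essentially bounded, I would prove by induction on $n$ that $\norm{\Theta_n}\leq\fC$ for a suitable $\fC$ depending only on $\norm{\Theta_0}$, $\abs\xi$, $\abs a$, $\abs b$, $d$. Using $\norm{\Theta_{n+1}}^2=\norm{\Theta_n}^2-2\gamma_n\scp{\Theta_n}{\fG^n(\Theta_n)}{}+\gamma_n^2\norm{\fG^n(\Theta_n)}^2$, the identity \eqref{plan:eq2}, the bounds $\abs{(\fG^n(\Theta_n))_\fd}\leq 2(\fL^n_\infty(\Theta_n))^{1/2}$ and $\norm{\fG^n(\Theta_n)}^2\leq\mathfrak{K}^2(1+\norm{\Theta_n})^2\fL^n_\infty(\Theta_n)$, and Young's inequality,
\begin{equation}
 \norm{\Theta_{n+1}}^2\leq\norm{\Theta_n}^2-6\gamma_n\fL^n_\infty(\Theta_n)+2\gamma_n\abs{2\xi-(\Theta_n)_\fd}^2+\gamma_n^2\mathfrak{K}^2(1+\norm{\Theta_n})^2\fL^n_\infty(\Theta_n),
\end{equation}
and the smallness assumption on $\gamma_n$ (this is where the exponent $5$ and the factor $18$ are consumed) lets the last term be absorbed into $-6\gamma_n\fL^n_\infty(\Theta_n)$; a careful bookkeeping of the residual terms --- using that $\fL^n_\infty(\Theta_n)$ is comparatively large whenever $\norm{\Theta_n}$ has grown --- then closes the induction and also delivers $\sum_{n}\gamma_n\fL^n_\infty(\Theta_n)<\infty$ almost surely.

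\textbf{Descent inequality and conclusion.} On the event $\{\sup_n\norm{\Theta_n}\leq\fC\}$ I would expand $\realapprox{\Theta_{n+1}}{\infty}(x)-\realapprox{\Theta_n}{\infty}(x)$ around $\Theta_n$ in the direction $\Theta_{n+1}-\Theta_n=-\gamma_n\fG^n(\Theta_n)$ using the left-derivative convention at the kink: the remainder splits into a genuinely quadratic part of size $\mathfrak{K}(1+\fC)^2\norm{\Theta_{n+1}-\Theta_n}^2$ and a part $\smallsum_{i=1}^{\width}(\Theta_n)_{\width(d+1)+i}\,\delta_i^n(x)$ coming from those hidden neurons $i$ whose pre-activation at $x$ changes sign between $\Theta_n$ and $\Theta_{n+1}$, with $0\leq\delta_i^n(x)\leq\norm{\Theta_{n+1}-\Theta_n}(1+\sqrt d\max\{\abs a,\abs b\})$. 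Substituting this into the representation of $\cL(\Theta_{n+1})-\cL(\Theta_n)$ as an expectation over $X^{0,0}$ of $(\realapprox{\Theta_{n+1}}{\infty}(X^{0,0})-\realapprox{\Theta_n}{\infty}(X^{0,0}))(\realapprox{\Theta_{n+1}}{\infty}(X^{0,0})+\realapprox{\Theta_n}{\infty}(X^{0,0})-2\xi)$, taking conditional expectation with respect to the natural filtration (under which $X^{n,\cdot}$ is independent of $\Theta_n$, so that $\E[\fG^n(\Theta_n)\mid\cdot]=\cG(\Theta_n)$), and using $\E[\norm{\fG^n(\Theta_n)}^2\mid\cdot]\leq\mathfrak{K}^2(1+\fC)^2\cL(\Theta_n)$ to absorb the quadratic part into the negative main term, one should arrive at
\begin{equation}\label{plan:eq3}
 \E\br{\cL(\Theta_{n+1})\mid\cdot}\leq\cL(\Theta_n)-\tfrac{\gamma_n}{2}\norm{\cG(\Theta_n)}^2+\varepsilon_n ,
\end{equation}
where $\varepsilon_n\geq 0$ collects the sign-change contributions and --- this is the decisive point --- $\sum_{n}\varepsilon_n<\infty$ almost surely, because the contribution of hidden neuron $i$ can be charged to the (ultimately finitely many) occasions on which its activation region, as seen through the law of $X^{0,0}$, degenerates: once hidden neuron $i$ is inactive $\P_{X^{0,0}}$-almost everywhere its incoming parameters are frozen forever. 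Granting \eqref{plan:eq3}, the Robbins--Siegmund lemma shows that $\cL(\Theta_n)$ converges almost surely and $\sum_{n}\gamma_n\norm{\cG(\Theta_n)}^2<\infty$; since $\sum_n\gamma_n=\infty$ this forces $\liminf_{n\to\infty}\norm{\cG(\Theta_n)}=0$, hence by $\cL(\phi)\leq(\norm\phi+\abs\xi)\norm{\cG(\phi)}$ and $\norm{\Theta_n}\leq\fC$ also $\liminf_{n\to\infty}\cL(\Theta_n)=0$; together with the almost sure convergence this gives $\lim_{n\to\infty}\cL(\Theta_n)=0$ almost surely, i.e.\ \cref{theo:intro:item2}, and \cref{theo:intro:item3} follows by bounded convergence. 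I expect the two hard parts to be the closing of the norm induction in the boundedness step and, above all, the almost sure summability of the error $\varepsilon_n$ stemming from the non-differentiability of $\Rect_\infty$ --- the latter is exactly what makes the scheme converge for learning rates that need not be square-summable, and it is the crux of the proof.
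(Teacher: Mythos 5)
Your algebraic identities \eqref{plan:eq1} and \eqref{plan:eq2} are correct, and the overall flavor (a Lyapunov-type descent, summability of $\sum_n\gamma_n\cL(\Theta_n)$, then upgrading a liminf to a limsup) matches the paper. But the two steps you yourself flag as ``hard'' are genuine gaps, and the first one cannot be closed in the form you propose. Running the induction on the plain norm, the cross term $-2\gamma_n(2\xi-(\Theta_n)_\fd)\,\fG^n_\fd(\Theta_n)$ has no sign, and after Young's inequality you are left with the residual $+2\gamma_n\abs{2\xi-(\Theta_n)_\fd}^2$, which is of order $\gamma_n$ times a quantity that is \emph{not} controlled by $\fL^n_\infty(\Theta_n)$; since $\sum_n\gamma_n=\infty$ (and $\gamma_n$ need not even be square-summable), these residuals can accumulate without bound. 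The heuristic that ``$\fL^n_\infty(\Theta_n)$ is comparatively large whenever $\norm{\Theta_n}$ has grown'' is false: a parameter with zero output weights and output bias near $\xi$ has arbitrarily large norm and zero risk. The paper's way out is exactly to augment the Lyapunov function to $V(\phi)=\norm{\phi}^2+\abs{\phi_\fd-2\xi}^2$; then $\langle(\nabla V)(\phi),\fG^n(\phi,\omega)\rangle=8\,\fL^n_\infty(\phi,\omega)$ holds \emph{exactly and pathwise} (\cref{lem:emp:lyapunov}), the cross term disappears, and one gets the pathwise monotone decrease of $V(\Theta_n)$ for small learning rates (\cref{lem:sgd:gen,lem:lyapunov:sgd,cor:lyapunov:sgd}), which yields both item \ref{theo:intro:item1} and $\sum_n\gamma_n\fL^n_\infty(\Theta_n)<\infty$ almost surely.

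The second gap is the claimed supermartingale inequality \eqref{plan:eq3} with almost surely summable sign-change errors $\varepsilon_n$: the argument that activation regions ``degenerate only finitely often'' is not substantiated (pre-activations can cross zero on sets of positive $\mu$-measure infinitely often along the trajectory), and nothing in your sketch bounds $\sum_n\varepsilon_n$. The paper shows this whole step is unnecessary: from the pathwise bound $\sum_n\gamma_n\fL^n_\infty(\Theta_n)<\infty$ one takes expectations, uses $\E[\fL^n_\infty(\Theta_n)]=\E[\cL(\Theta_n)]$ (which needs the measurability/independence structure of \cref{lem:sgd:measurable,cor:expected:loss}, since $\Theta_n$ depends only on past batches) and monotone convergence to get $\sum_n\gamma_n\cL(\Theta_n)<\infty$ almost surely; then $\liminf_{n\to\infty}\cL(\Theta_n)=0$, and the limsup is handled by a purely deterministic crossing-times contradiction argument on each fixed $\omega$, combining the local Lipschitz continuity of $\cL$ (\cref{lem:realization:lip}) with the uniform bound on $\norm{\fG^n(\Theta_n(\omega),\omega)}$ along the bounded trajectory (\cref{lem:empgradient:bounded}) --- no descent inequality for $\cL(\Theta_n)$ itself, and no Robbins--Siegmund lemma, is needed. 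Your final coercivity bound $\cL(\phi)\leq(\norm{\phi}+\abs{\xi})\norm{\cG(\phi)}$ is fine but likewise not needed once the crossing argument is available. In short: replace the plain norm by the shifted Lyapunov function and drop \eqref{plan:eq3} in favour of the expectation/crossing argument; as it stands, the proposal does not constitute a proof.
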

\end{samepage}

\cref{theo:intro} is a direct consequence of \cref{cor:sgd:norm} 
in \cref{subsection:sgd:convergence} below. 
\cref{cor:sgd:norm}, in turn, follows from \cref{theorem:sgd}. 
\cref{theorem:sgd} proves that the true risk of the considered SGD processes $(\Theta_n)_{n \in \N_0}$ converges to zero both in the almost sure and the $L^1$-sense in the special case where the target function is constant.
In \cref{section:gd} we establish an analogous result for the deterministic GD optimization method. More specifically, \cref{theo:gd:loss} demonstrates the the true risk of the considered GD processes converges to zero if the target function is constant.
Our proofs of \cref{theo:gd:loss} and \cref{theorem:sgd} make use of similar Lyapunov estimates as in Cheridito et al.~\cite{CheriditoJentzenRiekert2021}. The contradiction argument we use to deal with the case of non-constant learning rates in the proofs of \cref{theo:gd:loss} and \cref{theorem:sgd} is strongly inspired by the arguments in Lei et al.~\cite[Section IV.A]{LeiHuLiTang2020}.

\section{Convergence of gradient descent (GD) processes}
\label{section:gd}

In this section we establish in \cref{theo:gd:loss} in \cref{subsection:gd:convergence} below 
that the true risks of GD processes 
converge 
in the training of ANNs with ReLU activation 
to zero if the target function under consideration 
is a constant. 
\cref{theo:gd:loss} imposes the mathematical framework in \cref{setting:snn} in \cref{subsection:setting:gd} below 
and in \cref{setting:snn} we formally introduce, 
among other things, the considered target function 
$ f \colon [a,b]^d \to \R $ (which is assumed to be an element of the continuous functions 
$ C( [a,b ] ^d, \R ) $ from $ [a,b]^d $ to $ \R $), 
the realization functions 
$ \realapprox{\phi}{\infty} \colon \R^d \to \R $, 
$ \phi \in \R^\fd$, 
of the considered ANNs (see \cref{setting:snn:eq:realization} in \cref{setting:snn}), 
the true risk function $ \cL_{ \infty } \colon \R^\fd \to \R $, 
a sequence of smooth approximations 
$ \Rect_r \colon \R \to \R $, $ r \in \N $, 
of the ReLU activation function 
(see \cref{setting:assumption:rect} in \cref{setting:snn}),
as well as 
the appropriately generalized gradient function 
$ \mathcal{G} = ( \cG_1, \ldots, \cG_\fd) \colon \R^\fd \to \R^\fd $ 
associated to the true risk function. 
In the elementary result in \cref{prop:relu:approx} in \cref{subsection:relu:approx} below 
we also explicitly specify a simple example 
for the considered sequence of smooth approximations 
of the ReLU activation function. 
\cref{prop:relu:approx} is, e.g., proved as 
Cheridito et al.~\cite[Proposition 2.2]{CheriditoJentzenRiekert2021}.

\Cref{theo:gd:item2} in \cref{theo:gd:loss} in \cref{subsection:gd:convergence} below 
shows that the 
true risk 
$ \mathcal{L}_{ \infty }( \Theta_n ) $
of the GD process $ \Theta \colon \N_0 \to \R^{ \fd } $ 
converges to zero as the number 
of gradient descent steps $ n \in \N $ increases to infinity. 
In our proof of \cref{theo:gd:loss} we use 
the upper estimates for the standard norm of the generalized gradient function 
$ \mathcal{G} \colon \R^{ \mathfrak{d} } \to \R^{ \mathfrak{d} } $ 
in \cref{lem:gradient:est} and \cref{cor:g:bounded} in \cref{subsection:gradient:est} below 
as well as the Lyapunov type estimates for GD processes 
in \cref{lem:gradient:descent:gen}, \cref{cor:gradient:descent:gen}, \cref{cor:est:vtheta_n}, and \cref{lem:vthetan:decreasing} in 
\cref{subsection:lyapunov:gd} below. 
Our proof of 
\cref{cor:g:bounded}
employs \cref{lem:gradient:est} and the elementary 
local Lipschitz continuity estimates 
for the true risk function 
in \cref{lem:realization:lip} below. 
\cref{lem:realization:lip} is a direct consequence of, e.g., Beck et al.~\cite[Theorem 2.36]{BeckJentzenKuckuck2019arXiv}. 
Our proof of \cref{lem:gradient:est} makes use of the 
elementary representation result for 
the generalized gradient function $ \cG \colon \R^\fd \to \R^\fd $ 
in \cref{prop:limit:lr} in \cref{subsection:approx:gradient} below.

Our proof of \cref{cor:gradient:descent:gen} employs 
\cref{lem:gradient:est} and the elementary lower and upper estimates 
for the Lyapunov function 
$ \R^{ \mathfrak{d} } \ni \phi \mapsto \norm{\phi}^2 + \abs{\phi_\fd - 2 f(0) }^2 \in \R $
in \cref{prop:lyapunov:norm} below. 
Our proof of \cref{lem:gradient:descent:gen} uses the elementary representation result 
for the gradient function 
of the Lyapunov function 
$ V \colon \R^\fd \to \R $ 
in \cref{prop:v:gradient} in \cref{subsection:lyapunov:elementary} below 
as well as the identities 
for the gradient flow dynamics 
of the Lyapunov function in 
\cref{prop:lyapunov:gradient} and \cref{cor:lyapunov:const} in \cref{subsection:lyapunov:elementary} below.

The findings in this section extend and/or generalize 
the findings in 
Section 2 and Section 3 
in Cheridito et al.~\cite{CheriditoJentzenRiekert2021} (to the more general and multi-dimensional 
setup considered in \cref{setting:snn} in \cref{subsection:setting:gd}). In particular, 
\cref{prop:relu:approx} in \cref{subsection:relu:approx} is a direct consequence of \cite[Proposition 2.2]{CheriditoJentzenRiekert2021},
\cref{prop:limit:lr} in \cref{subsection:approx:gradient} extends \cite[Proposition 2.3]{CheriditoJentzenRiekert2021}, 
\cref{lem:gradient:est} in \cref{subsection:gradient:est} generalizes \cite[Lemma 2.10]{CheriditoJentzenRiekert2021},
\cref{cor:g:bounded} in \cref{subsection:gradient:est} generalizes \cite[Corollary 2.11]{CheriditoJentzenRiekert2021},
\cref{prop:lyapunov:norm} in \cref{subsection:lyapunov:elementary} generalizes \cite[Proposition 2.12]{CheriditoJentzenRiekert2021},
\cref{prop:v:gradient} in \cref{subsection:lyapunov:elementary} generalizes Proposition \cite[Proposition 2.13]{CheriditoJentzenRiekert2021},
\cref{cor:lyapunov:const} in \cref{subsection:lyapunov:elementary} extends \cite[Proposition 2.14]{CheriditoJentzenRiekert2021}, 
\cref{cor:critical:points} in \cref{subsection:lyapunov:elementary} generalizes \cite[Corollary 2.15]{CheriditoJentzenRiekert2021},
\cref{lem:gradient:descent:gen} in \cref{subsection:lyapunov:gd} extends \cite[Lemma 4.1]{CheriditoJentzenRiekert2021},
\cref{cor:est:vtheta_n} in \cref{subsection:lyapunov:gd} generalizes \cite[Corollary 4.2]{CheriditoJentzenRiekert2021},
\cref{lem:vthetan:decreasing} in \cref{subsection:lyapunov:gd} extends \cite[Lemma 4.3]{CheriditoJentzenRiekert2021}, and
\cref{theo:gd:loss} in \cref{subsection:lyapunov:gd} generalizes \cite[Theorem 4.4]{CheriditoJentzenRiekert2021}. 

\subsection{Description of artificial neural networks (ANNs) with ReLU activation}
\label{subsection:setting:gd}

\begin{setting} \label{setting:snn} 
Let $d, \width, \fd \in \N$, $ \bfa , a \in \R$, $b \in (a, \infty)$, $f \in C ( [a , b]^d , \R)$ satisfy $\fd = d\width + 2 \width + 1$ and $\bfa = \max \{ \abs{a}, \abs{b} , 1 \}$,
let $\fw  = (( \w{\phi} _ {i,j}  )_{(i,j) \in \{1, \ldots, \width \} \times \{1, \ldots, d \} })_{ \phi \in \R^{\fd}} \colon \R^{\fd} \to \R^{ \width \times d}$,
$\fb =  (( \b{\phi} _ 1 , \ldots, \b{\phi} _ \width ))_{ \phi \in \R^{\fd}} \colon \R^{\fd} \to \R^{\width}$,
$\fv = (( \v{\phi} _ 1 , \ldots, \v{\phi} _ \width ))_{ \phi \in \R^{\fd}} \colon \R^{\fd} \to \R^{\width}$, and
$\fc = (\c{\phi})_{\phi \in \R^{\fd }} \colon \R^{\fd} \to \R$
 satisfy for all $\phi  = ( \phi_1 ,  \ldots, \phi_{\fd}) \in \R^{\fd}$, $i \in \{1, 2, \ldots, \width \}$, $j \in \{1, 2, \ldots, d \}$ that $\w{\phi}_{i , j} = \phi_{ (i - 1 ) d + j}$, $\b{\phi}_i = \phi_{\width d + i}$, 
$\v{\phi}_i = \phi_{ \width ( d+1 )  + i}$, and $\c{\phi} = \phi_{\fd}$,
let $\Rect _r \colon \R \to \R$, $r \in \N \cup \{ \infty\}$, satisfy for all  $x \in \R$ that $ \rbr*{ \bigcup_{r \in \N } \{ \Rect _r \}  } \subseteq C^1 ( \R , \R)$, $\Rect _\infty ( x ) = \max \{ x , 0 \}$, $\sup_{r \in \N} \sup_{y \in [-\abs{x}  , \abs{ x } ]}  \abs{ (\Rect _r)'(y) }  < \infty$, and 
\begin{equation} \label{setting:assumption:rect}
     \limsup\nolimits_{r \to \infty} \rbr*{ \abs { \Rect _r ( x ) - \Rect _\infty ( x ) } + \abs { (\Rect _r)' ( x ) - \indicator{(0, \infty)} ( x ) } } = 0,
\end{equation}
let $\mu \colon \cB ( [ a,b] ^d ) \to [0,1]$ be a probability measure,
let $\scrN_ r = (\realapprox{\phi}{r})_{\phi \in \R^{\fd } } \colon \R^{\fd } \to C(\R^d , \R)$, $r \in \N \cup \{ \infty \}$, and $\cL_r \colon \R^{\fd  } \to \R$, $r \in \N \cup \{ \infty \}$,
satisfy for all $r \in \N \cup \{ \infty \}$, $\phi \in \R^{\fd}$, $x = (x_1, \ldots, x_d) \in \R^d$ that 
\begin{equation} \label{setting:snn:eq:realization}
\realapprox{\phi}{r} (x) = \c{\phi} + \smallsum_{i = 1}^\width \v{\phi}_i \Rect _r \rbr[\big]{ \b{\phi}_i + \smallsum_{j=1}^d \w{\phi}_{i,j} x_j }
\end{equation}
and $\cL_r(\phi) = \int_{[a ,b]^d} (\realapprox{\phi}{r} (y) - f ( y ) )^2 \, \mu ( \d y )$,
let $\cG = (\cG_1, \ldots, \cG_{\fd}) \colon \R^{\fd} \to \R^{\fd}$ satisfy for all
$\phi \in  \{ \varphi \in \R^{\fd} \colon   ((\nabla \cL_r ) ( \varphi ) )_{r \in \N } \text{ is convergent} \}$ that $\cG ( \phi ) = \lim_{r \to \infty} (\nabla \cL_r ) ( \phi )$,
let $\norm{ \cdot } \colon \rbr*{  \bigcup_{n \in \N} \R^n  } \to \R$ and $\langle \cdot , \cdot \rangle \colon \rbr*{  \bigcup_{n \in \N} (\R^n \times \R^n )  } \to \R$ satisfy for all $n \in \N$, $x=(x_1, \ldots, x_n)$, $y=(y_1, \ldots, y_n ) \in \R^n $ that $\norm{ x } = [ \sum_{i=1}^n \abs*{ x_i } ^2 ] ^{1/2}$ and $\langle x , y \rangle = \sum_{i=1}^n x_i y_i$,
and let $I_i^\phi \subseteq \R^d$, $\phi \in \R^{\fd }$, $i \in \{1, 2, \ldots, \width \}$, and $V \colon \R^{\fd } \to \R$ satisfy for all 
$\phi \in \R^{\fd}$, $i \in \{1, 2, \ldots, \width \}$ that $I_i^\phi = \{ x = (x_1, \ldots, x_d) \in [a,b]^d \colon \b{\phi}_i + \sum_{j = 1}^d \w{\phi}_{i,j} x_j > 0 \}$ and $V(\phi) = \norm{ \phi } ^2 + \abs{ \c{\phi} -  2 f ( 0 ) } ^2$.
\end{setting}

\subsection{Smooth approximations for the ReLU activation function}
\label{subsection:relu:approx}

\begin{prop} \label{prop:relu:approx}
Let $\Rect_r \colon \R \to \R$, $r \in \N$, satisfy for all $r \in \N$, $x \in \R$ that $\Rect _r ( x ) = r^{-1} \ln ( 1 + r^{-1} e^{r x } )$. Then
\begin{enumerate} [label=(\roman*)] 
    \item it holds for all $r \in \N$ that $\Rect_r \in C^\infty ( \R , \R)$,
    \item it holds for all $x \in \R$ that $\limsup_{r \to \infty}  \abs { \Rect _r ( x ) - \max \{ x , 0 \} }= 0$,
    \item it holds for all $x \in \R$ that $\limsup_{r \to \infty}  \abs { (\Rect _r)' ( x ) - \indicator{(0, \infty)} ( x ) }  = 0$, and
    \item it holds that $\sup_{r \in \N} \sup_{x \in \R}  \abs{ (\Rect _r)' (x) }  \leq 1 < \infty$.
\end{enumerate}
\end{prop}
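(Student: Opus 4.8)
The plan is to verify all four assertions of \cref{prop:relu:approx} by direct elementary computations with the explicit formula $\Rect_r(x) = r^{-1}\ln(1 + r^{-1}e^{rx})$. For item~(i), I would note that for each $r \in \N$ the map $\R \ni x \mapsto 1 + r^{-1}e^{rx} \in (0,\infty)$ is smooth with strictly positive values, so that $\Rect_r$, being the composition of this map with $\ln$ followed by multiplication by $r^{-1}$, lies in $C^\infty(\R,\R)$. Next I would compute via the chain rule, using $\tfrac{d}{dx}(1 + r^{-1}e^{rx}) = e^{rx}$, that for all $r \in \N$ and all $x \in \R$
\[
  (\Rect_r)'(x) = \frac{r^{-1}e^{rx}}{1 + r^{-1}e^{rx}} = \frac{e^{rx}}{r + e^{rx}} = \bigl(1 + r\,e^{-rx}\bigr)^{-1}.
\]
Since $r \ge 1$ and $e^{rx} > 0$, this quantity lies in the open interval $(0,1)$ for every $r \in \N$ and every $x \in \R$, which proves item~(iv).

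For item~(iii) I would let $r \to \infty$ in the identity $(\Rect_r)'(x) = (1 + r\,e^{-rx})^{-1}$, distinguishing the sign of $x$. If $x > 0$, then $r\,e^{-rx} \to 0$ as $r \to \infty$, so $(\Rect_r)'(x) \to 1 = \indicator{(0,\infty)}(x)$; if $x = 0$, then $(\Rect_r)'(0) = (1+r)^{-1} \to 0 = \indicator{(0,\infty)}(0)$; and if $x < 0$, then $r\,e^{-rx} \to \infty$, so $(\Rect_r)'(x) \to 0 = \indicator{(0,\infty)}(x)$. This establishes item~(iii).

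For item~(ii) I would rewrite $\Rect_r(x) = r^{-1}\bigl(\ln(r + e^{rx}) - \ln r\bigr)$ and estimate in two regimes. For $x \le 0$ one has $e^{rx} \le 1$, hence $r \le r + e^{rx} \le r + 1$, and therefore $0 \le \Rect_r(x) \le r^{-1}\ln(1 + r^{-1}) \le r^{-2}$, which converges to $0 = \max\{x,0\}$. For $x > 0$ one has $e^{rx} \le r + e^{rx} \le (1+r)e^{rx}$ (the last inequality because $e^{rx} \ge 1$), hence
\[
  x - \tfrac{\ln r}{r} \le \Rect_r(x) \le x + \tfrac{1}{r}\ln(1 + r^{-1}) \le x + r^{-2},
\]
and both bounds converge to $x = \max\{x,0\}$ as $r \to \infty$. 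The squeeze theorem then yields item~(ii).

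There is essentially no serious obstacle here: the whole statement reduces to routine limit computations (and indeed \cref{prop:relu:approx} is already available in the literature). The only points that require a modicum of care are the bookkeeping of the vanishing error terms $r^{-1}\ln r$ and $r^{-2}$ in the squeeze for item~(ii) with $x > 0$, and separating the boundary case $x = 0$ in item~(iii), where $(\Rect_r)'(0)$ does not coincide with the target value $0$ for finite $r$ but still tends to it.
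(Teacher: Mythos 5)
Your proof is correct: the closed form $(\Rect_r)'(x) = (1+re^{-rx})^{-1}$ and the two-regime squeeze for $\Rect_r(x)$ are exactly the routine computations needed, and all limits and bounds (including the boundary case $x=0$ in item~(iii) and the error terms $r^{-1}\ln r$ and $r^{-2}$ in item~(ii)) check out. The paper itself gives no proof of \cref{prop:relu:approx} and instead cites Cheridito et al.~\cite[Proposition~2.2]{CheriditoJentzenRiekert2021}, where essentially this same elementary argument is carried out, so your write-up is a faithful self-contained substitute.
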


\subsection{Properties of the approximating true risk functions and their gradients}
\label{subsection:approx:gradient}

\begin{prop} \label{prop:limit:lr}
Assume \cref{setting:snn} and let $\phi  = (\phi_1, \ldots, \phi_{\fd}) \in \R^{\fd}$. Then 
\begin{enumerate} [label=(\roman*)]
\item \label{prop:limit:lr:1} it holds for all $r \in \N$ that $\cL _ r \in C^1 ( \R^{\fd }, \R)$,
    \item \label{prop:limit:lr:2} it holds for all $r \in \N$, $i \in \{1, 2, \ldots, \width \}$, $j \in \{1, 2, \ldots, d \}$ that
\begin{equation} \label{eq:approx:loss:gradient}
    \begin{split}
        \rbr[\big]{  \tfrac{\partial  }{ \partial \phi_{ (i - 1 ) d + j}} \cL_r } ( \phi ) &= 2 \v{\phi}_i \int_{[a,b]^d} x_j \br[\big]{  (\Rect _r )' \rbr[\big]{ \b{\phi}_i + \smallsum_{k = 1}^d \w{\phi}_{i, k} x_k }} ( \realapprox{\phi}{r}(x) - f ( x ) ) \, \mu ( \d x ), \\
         \rbr[\big]{  \tfrac{\partial }{ \partial \phi_{\width d + i}}  \cL_r } ( \phi ) &= 2 \v{\phi}_i \int_{[a,b]^d}  \br[\big]{  (\Rect _r )' \rbr[\big]{ \b{\phi}_i + \smallsum_{k = 1}^d \w{\phi}_{i, k} x_k } } ( \realapprox{\phi}{r}(x) - f ( x ) ) \, \mu ( \d x ), \\
          \rbr[\big]{  \tfrac{\partial }{ \partial \phi_{ \width ( d+1 )  + i }}  \cL_r } ( \phi ) &= 2  \int_{[a,b]^d} \br[\big]{  \Rect _r\rbr[\big]{ \b{\phi}_i + \smallsum_{k = 1}^d \w{\phi}_{i, k} x_k } } ( \realapprox{\phi}{r}(x) - f ( x ) ) \, \mu ( \d x ),  \\
          \text{and} \qquad \rbr[\big]{  \tfrac{\partial  }{ \partial \phi_\fd} \cL_r } ( \phi ) &= 2  \int_{[a,b]^d} ( \realapprox{\phi}{r}(x) - f (x )) \, \mu(\d x),
    \end{split}
\end{equation}
\item \label{prop:limit:lr:3} it holds that $\limsup_{r \to \infty} \abs{ \cL_r ( \phi ) - \cL_\infty ( \phi) } = 0$,
\item \label{prop:limit:lr:4} it holds that $\limsup_{r \to \infty } \norm{ ( \nabla \cL_ r ) ( \phi ) - \cG ( \phi ) }  = 0$, and
\item \label{prop:limit:lr:5} it holds for all $i \in \{1, 2, \ldots, \width \}$, $j \in \{1, 2, \ldots, d \}$ that
\begin{equation} \label{eq:loss:gradient}
\begin{split}
        \cG_{ (i - 1 ) d + j} ( \phi) &= 2 \v{\phi}_i \int_{I_i^\phi} x _j ( \realapprox{\phi}{\infty} (x) - f ( x ) ) \, \mu ( \d x ), \\
        \cG_{ \width d + i} ( \phi) &= 2 \v{\phi}_i \int_{I_i^\phi} (\realapprox{\phi}{\infty} (x) - f ( x ) ) \, \mu ( \d x ), \\
        \cG_{\width ( d+1 )  + i} ( \phi) &= 2 \int_{[a,b]^d} \br[\big]{\Rect _\infty \rbr[\big]{\b{\phi}_i + \smallsum_{k = 1}^d \w{\phi}_{i, k} x_k } } ( \realapprox{\phi}{\infty}(x) - f ( x ) ) \, \mu ( \d x ), \\
        \text{and} \qquad \cG_{\fd} ( \phi) &= 2 \int_{[a,b]^d} (\realapprox{\phi}{\infty} (x) - f ( x ) ) \, \mu ( \d x ).
        \end{split}
\end{equation}

\end{enumerate}
\end{prop}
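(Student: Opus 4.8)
The plan is to deduce items \ref{prop:limit:lr:1}--\ref{prop:limit:lr:5} from the chain rule, a standard theorem on differentiating under the integral sign, and the Lebesgue dominated convergence theorem, the one recurring technical ingredient being a uniform bound on the approximations $\Rect_r$ and their derivatives over compact sets. Concretely, for every $c \in [0,\infty)$ the hypothesis in \cref{setting:snn} yields $\sup_{r \in \N}\sup_{y \in [-c,c]}\abs{(\Rect_r)'(y)} < \infty$, and \eqref{setting:assumption:rect} evaluated at $y = 0$ yields $\Rect_r(0) \to \Rect_\infty(0) = 0$ and hence $\sup_{r \in \N}\abs{\Rect_r(0)} < \infty$; by the mean value theorem these combine to give $\sup_{r \in \N}\sup_{y \in [-c,c]}\abs{\Rect_r(y)} < \infty$. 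Since $x = (x_1,\ldots,x_d)$ ranges over the compact cube $[a,b]^d$, for each fixed $\phi$ the affine readouts $\b{\phi}_i + \sum_{j=1}^d \w{\phi}_{i,j}x_j$ stay in a bounded interval, so the quantities $\Rect_r(\b{\phi}_i + \sum_j \w{\phi}_{i,j}x_j)$, $(\Rect_r)'(\b{\phi}_i + \sum_j \w{\phi}_{i,j}x_j)$, $\realapprox{\phi}{r}(x)$, and $f(x)$ admit bounds that are uniform in $r \in \N$ and $x \in [a,b]^d$ and locally uniform in $\phi$.

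For items \ref{prop:limit:lr:1} and \ref{prop:limit:lr:2} I would fix $r \in \N$ and observe that $\R^\fd \ni \phi \mapsto \realapprox{\phi}{r}(x) \in \R$ is continuously differentiable for every $x \in \R^d$, being built from the $C^1$ function $\Rect_r$ by finitely many additions and multiplications; the chain rule produces $\tfrac{\partial}{\partial\phi_{(i-1)d+j}}\realapprox{\phi}{r}(x) = \v{\phi}_i (\Rect_r)'(\b{\phi}_i + \sum_k \w{\phi}_{i,k}x_k)\,x_j$, $\tfrac{\partial}{\partial\phi_{\width d + i}}\realapprox{\phi}{r}(x) = \v{\phi}_i (\Rect_r)'(\b{\phi}_i + \sum_k \w{\phi}_{i,k}x_k)$, $\tfrac{\partial}{\partial\phi_{\width (d+1) + i}}\realapprox{\phi}{r}(x) = \Rect_r(\b{\phi}_i + \sum_k \w{\phi}_{i,k}x_k)$, and $\tfrac{\partial}{\partial\phi_\fd}\realapprox{\phi}{r}(x) = 1$. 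Hence $\phi \mapsto (\realapprox{\phi}{r}(x) - f(x))^2$ is $C^1$ with gradient $2(\realapprox{\phi}{r}(x) - f(x))(\nabla_\phi\realapprox{\phi}{r})(x)$, which is jointly continuous in $(\phi,x)$ and therefore, by the bound from the previous paragraph, dominated locally uniformly in $\phi$ by a $\mu$-integrable constant on $[a,b]^d$. The differentiation-under-the-integral-sign theorem then gives $\cL_r \in C^1(\R^\fd,\R)$ together with $(\nabla\cL_r)(\phi) = \int_{[a,b]^d} 2(\realapprox{\phi}{r}(y) - f(y))(\nabla_\phi\realapprox{\phi}{r})(y)\,\mu(\d y)$, and reading off the four coordinate blocks yields \eqref{eq:approx:loss:gradient}.

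For items \ref{prop:limit:lr:3}, \ref{prop:limit:lr:4}, and \ref{prop:limit:lr:5} I would fix $\phi \in \R^\fd$ and let $r \to \infty$. By \eqref{setting:assumption:rect}, for every $y \in \R$ we have $\Rect_r(y) \to \Rect_\infty(y)$ and $(\Rect_r)'(y) \to \indicator{(0,\infty)}(y)$ — in particular $(\Rect_r)'(0) \to 0 = \indicator{(0,\infty)}(0)$ — so $\realapprox{\phi}{r}(x) \to \realapprox{\phi}{\infty}(x)$ for every $x \in \R^d$. Therefore the integrand of $\cL_r(\phi)$ converges pointwise to that of $\cL_\infty(\phi)$ and is bounded uniformly in $r$ on $[a,b]^d$, and dominated convergence (recall $\mu$ is a probability measure) gives item \ref{prop:limit:lr:3}. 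Applying dominated convergence to each of the four uniformly bounded integrands in \eqref{eq:approx:loss:gradient} — using that, for $x \in [a,b]^d$, $(\Rect_r)'(\b{\phi}_i + \sum_k \w{\phi}_{i,k}x_k) \to \indicator{(0,\infty)}(\b{\phi}_i + \sum_k \w{\phi}_{i,k}x_k) = \indicator{I_i^\phi}(x)$ and $\Rect_r(\b{\phi}_i + \sum_k \w{\phi}_{i,k}x_k) \to \Rect_\infty(\b{\phi}_i + \sum_k \w{\phi}_{i,k}x_k)$ — shows that $((\nabla\cL_r)(\phi))_{r \in \N}$ is convergent with limit equal to the right-hand side of \eqref{eq:loss:gradient}; since $\cG$ is by definition this limit, items \ref{prop:limit:lr:4} and \ref{prop:limit:lr:5} follow at once.

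The chain-rule computations and the passages to the limit are routine; I expect the only place where genuine care is needed is the verification of the uniform bounds of the first paragraph and the check that the hypotheses of the differentiation-under-the-integral-sign theorem hold locally uniformly in $\phi$ — in other words, the bookkeeping that licenses interchanging differentiation and limits with the integral $\int_{[a,b]^d}(\cdot)\,\mu(\d y)$.
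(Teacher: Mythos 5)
Your proposal is correct and follows essentially the same route as the paper's proof: a uniform-in-$r$ bound for $\Rect_r$ and $(\Rect_r)'$ on compact sets (the paper derives the bound on $\Rect_r$ via the fundamental theorem of calculus rather than the mean value theorem, an immaterial difference), then the chain rule and differentiation under the integral sign for items \ref{prop:limit:lr:1} and \ref{prop:limit:lr:2}, and dominated convergence with the pointwise limits from \cref{setting:assumption:rect} for items \ref{prop:limit:lr:3}--\ref{prop:limit:lr:5}.
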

\begin{cproof} {prop:limit:lr}
Throughout this proof let $\fM \colon [0, \infty) \to [0, \infty]$ satisfy for all $x \in [0, \infty)$ that $\fM ( x ) = \sup_{r \in \N} \sup_{y \in [-x,x]} \rbr*{ \abs{\Rect _r ( y ) } + \abs{(\Rect _r)' ( y ) } }$.
\Nobs that the assumption that for all $r \in \N$ it holds that $\Rect _r \in C^1 ( \R , \R)$ implies that for all $r \in \N$, $x \in \R$ we have that $\Rect _r(x) = \Rect _r(0) + \int_0^x (\Rect _r)'(y) \, \d y$. This, the assumption that for all $x \in \R$ it holds that $\sup_{r \in \N} \sup_{y \in [-\abs{x}  , \abs{ x } ]}  \abs{ (\Rect _r)'(y) }  < \infty$ and the fact that $\sup_{r \in \N} \abs{\Rect _r(0)} < \infty$ prove that for all $x \in [0, \infty)$ it holds that $\sup_{r \in \N} \sup_{y \in [-x,x]} \abs{\Rect _r ( y ) }  < \infty$. Hence, we obtain that for all $x \in [0, \infty)$ it holds that $\fM ( x ) < \infty$.
This, the assumption that for all $r \in \N$ it holds that $\Rect _r \in C^1 ( \R , \R)$, the chain rule, and the dominated convergence theorem establish \cref{prop:limit:lr:1,prop:limit:lr:2}.
Next \nobs that for all $r \in \N$, $x = (x_1, \ldots, x_d) \in [a,b]^d$ it holds that
\begin{equation} \label{proof:limit:lr:eq1}
\begin{split}
     \abs{ \realapprox{\phi}{r} ( x ) - f ( x ) }
   &\leq \br[\big]{ \sup\nolimits_{y \in [a,b]^d } \abs { f(y)} } + \abs{ \c{\phi} } + \smallsum_{i = 1}^\width \abs{\v{\phi}_i}  \br[\big]{\Rect _r \rbr[\big]{ \b{\phi}_i + \smallsum_{j=1}^d \w{\phi}_{i,j} x_j } } \\
   &\leq \br[\big]{ \sup\nolimits_{y \in [a,b]^d } \abs { f(y)} } + \abs{ \c{\phi} } + \smallsum_{i = 1}^\width \abs{\v{\phi}_i} \br[\big]{ \fM \rbr[\big]{ \abs{\b{\phi}_i} + \bfa \smallsum_{j = 1}^d \abs{ \w{\phi}_{i,j}} } } .
   \end{split}
\end{equation}
The fact that for all $x \in [a,b]^d$ it holds that $\lim_{r \to \infty} ( \realapprox{\phi}{r} ( x ) - f ( x ) ) = \realapprox{\phi}{\infty} ( x ) - f ( x ) $ and the dominated convergence theorem hence prove that $\lim_{r \to \infty} \cL_r ( \phi ) = \cL_\infty ( \phi)$. This establishes \cref{prop:limit:lr:3}.
Moreover, \nobs that \cref{proof:limit:lr:eq1}, the dominated convergence theorem, and the fact that for all $x \in [a,b]^d$ it holds that $ \lim_{r \to \infty} ( \realapprox{\phi}{r} ( x ) - f (x ) ) = \realapprox{\phi}{\infty} ( x ) - f ( x )$ assure that
\begin{equation} \label{proof:limit:lr:eq2}
    \lim_{r \to \infty}  \br[\big]{  \rbr[\big]{ \tfrac{\partial  }{ \partial \phi_\fd} \cL_r } ( \phi ) }
    = 2 \int_{[a,b]^d} (\realapprox{\phi}{\infty} (x) - f ( x ) ) \, \mu ( \d x ).
\end{equation}
Next \nobs that for all $x =(x_1, \ldots, x_d) \in [a,b]^d$, $i \in \{1, 2, \ldots, \width \}$, $j \in \{1, 2, \dots, d \}$ we have that
\begin{equation} \label{prop:limit:lr:eq:partialw}
    \begin{split}
        &\lim_{r \to \infty}  \br[\big]{ x _j \br[\big]{  (\Rect _r) ' \rbr[\big]{ \b{\phi}_i + \smallsum_{k = 1}^d \w{\phi}_{i, k} x_k } } ( \realapprox{\phi}{r}(x) - f ( x ) ) } \\
        &= x_j ( \realapprox{\phi}{\infty} ( x ) - f ( x ) ) \indicator{(0 , \infty ) }  \rbr[\big]{ \b{\phi}_i + \smallsum_{k = 1}^d \w{\phi}_{i, k} x_k } \\
        &= x _j ( \realapprox{\phi}{\infty} ( x ) - f ( x ) )\indicator{I_i^\phi} ( x )
    \end{split}
\end{equation}
and
\begin{equation} \label{prop:limit:lr:eq:partialb}
\begin{split}
    & \lim_{r \to \infty} \br[\big]{   [(\Rect _r) ' \rbr[\big]{ \b{\phi}_i + \smallsum_{k = 1}^d \w{\phi}_{i, k} x_k }] ( \realapprox{\phi}{r}(x) - f ( x )) } \\
     &=  ( \realapprox{\phi}{\infty} ( x ) - f ( x ) ) \indicator{(0 , \infty ) }\rbr[\big]{ \b{\phi}_i + \smallsum_{k = 1}^d \w{\phi}_{i, k} x_k } \\
     &=  ( \realapprox{\phi}{\infty} ( x ) - f ( x ) )  \indicator{I_i^\phi} ( x ).
     \end{split}
\end{equation}
Furthermore, \nobs that \cref{proof:limit:lr:eq1} shows that for all $r \in \N$, $x = (x_1, \ldots, x_d) \in [a , b ]^d$, $i \in \{1, 2, \ldots, \width \}$, $j \in \{1, 2, \ldots, d \}$, $v \in \{ 0, 1 \}$ it holds that
\begin{equation} \label{prop:limit:lr:eq:partial:bounded}
\begin{split}
    &\abs[\big]{ ( x_j )^v \br[\big]{ (\Rect _r) ' \rbr[\big]{ \b{\phi}_i + \smallsum_{k = 1}^d \w{\phi}_{i, k} x_k } } ( \realapprox{\phi}{r}(x) - f ( x )) } \\
    &\leq \bfa  \br[\Big]{ \abs[\big]{ (\Rect _r ) ' \rbr[\big]{ \b{\phi}_i + \smallsum_{k = 1}^d \w{\phi}_{i, k} x_k} } } \br[\big]{ \abs{ \realapprox{\phi}{r}(x) - f ( x ) } }\\
    & \leq \bfa \br[\big]{ \fM \rbr[\big]{ \abs{\b{\phi}_i} + \bfa \smallsum_{k = 1}^d \abs{\w{\phi}_{i, k}} } } \br[\big]{ \abs{ \realapprox{\phi}{r} ( x ) - f ( x ) } } \\
   &\leq \bfa \br[\big]{ \fM \rbr[\big]{ \abs{\b{\phi}_i} + \bfa \smallsum_{k = 1}^d \abs{\w{\phi}_{i, k} } } } \Bigl( \br[\big]{ \sup\nolimits_{y \in [a,b]^d} \abs{f ( y ) } } \Bigr. \\
   & \Bigl. \quad + \abs{ \c{\phi} } + \smallsum_{\ell = 1}^\width \abs{\v{\phi}_ \ell} \br[\big]{ \fM \rbr[\big]{\abs{\b{\phi}_\ell } + \bfa \smallsum_{m = 1 }^d \abs{ \w{\phi}_{\ell , m}} } }  \Bigr) .
   \end{split}
\end{equation}
The dominated convergence theorem and \cref{prop:limit:lr:eq:partialw} hence prove that for all $i \in \{1, 2, \ldots, \width \}$, $j \in \{1, 2, \ldots, d \}$ we have that
\begin{equation} \label{proof:limit:lr:eq3}
\begin{split}
    \lim_{r \to \infty} \br[\big]{  \rbr[\big]{  \tfrac{\partial  }{ \partial \phi_{ (i - 1 ) d + j}} \cL_r } ( \phi ) }
    &=  2 \v{\phi}_i \int_{[a,b]^d} x_j ( \realapprox{\phi}{\infty} ( x ) - f ( x ) )  \indicator{I_i^\phi} ( x ) \, \mu ( \d x ) \\
    & = 2 \v{\phi}_i \int_{I_i^\phi} x_j ( \realapprox{\phi}{\infty} (x) - f ( x ) ) \, \mu (\d  x ) .
    \end{split}
\end{equation}
Moreover, \nobs that \cref{prop:limit:lr:eq:partialb}, \cref{prop:limit:lr:eq:partial:bounded}, and the dominated convergence theorem demonstrate that for all $i \in \{1, 2, \ldots, \width \}$, $j \in \{1, 2, \ldots, d \}$ it holds that
\begin{equation} \label{proof:limit:lr:eq4}
\begin{split}
    \lim_{r \to \infty} \br[\big]{  \rbr[\big]{  \tfrac{\partial  }{ \partial \phi_{ \width d + i}} \cL_r } ( \phi ) }
    &= 2 \v{\phi}_i \int_{[a,b]^d} ( \realapprox{\phi}{\infty} ( x ) - f ( x ) ) \indicator{I_i^\phi} ( x ) \, \mu ( \d x ) \\
    & =  2 \v{\phi}_i \int_{I_i^\phi } ( \realapprox{\phi}{\infty} (x) - f ( x ) ) \, \mu ( \d x ) .
    \end{split}
\end{equation}
Furthermore, \nobs that for all $x \in [a , b]^d$, $i \in \{1, 2, \ldots, \width \}$ it holds that
\begin{equation} \label{prop:limit:lr:eq:partialv}
    \lim_{r \to \infty} \br[\big]{  \Rect _r \rbr[\big]{ \b{\phi}_i + \smallsum_{j = 1}^d \w{\phi}_{i,j} x_j } } ( \realapprox{\phi}{r}(x) - f ( x ) )
    = \br[\big]{\Rect  _\infty  \rbr[\big]{ \b{\phi}_i + \smallsum_{j = 1}^d \w{\phi}_{i,j} x_j } } ( \realapprox{\phi}{\infty}(x) - f ( x ) ) .
\end{equation}
In addition, \nobs that \cref{proof:limit:lr:eq1} ensures that for all $r \in \N$, $x \in [a,b]^d$, $i \in \{1, 2, \ldots, \width\}$ we have that
\begin{equation}
\begin{split}
    &\abs[\big]{  \br[\big]{  \Rect _r \rbr[\big]{ \b{\phi}_i + \smallsum_{j = 1}^d \w{\phi}_{i,j} x_j } } ( \realapprox{\phi}{r}(x) - f ( x ) )  } \\
    &\leq  \br[\big]{ \fM \rbr[\big]{ \abs{ \b{\phi}_i } + \bfa \smallsum_{j = 1}^d \abs{\w{\phi}_{i,j}} } } \abs{ \realapprox{\phi}{r} ( x ) - f ( x ) } \\
   &\leq \br[\big]{ \fM \rbr[\big]{\abs{ \b{\phi}_i } + \bfa \smallsum_{j = 1}^d \abs{\w{\phi}_{i,j}} } } \Bigl( \br[\big]{ \sup\nolimits_{y \in [a,b]^d} \abs{f(y)} } \Bigr. \\
   & \Bigl. \quad+ \abs{ \c{\phi} } + \smallsum_{\ell = 1}^\width \abs{\v{\phi}_\ell } \br[\big]{ \fM \rbr[\big]{ \abs{\b{\phi}_\ell} + \bfa \smallsum_{m = 1}^d \abs{ \w{\phi}_{\ell , m}} } } \Bigr) .
\end{split}
\end{equation}
This, \cref{prop:limit:lr:eq:partialv}, and the dominated convergence theorem demonstrate that for all $i \in \{1, 2, \ldots, \width \}$ it holds that 
\begin{equation} 
\lim_{r \to \infty} \br[\big]{   \rbr[\big]{  \tfrac{\partial  }{ \partial \phi_{ \width ( d+1 )  + i}} \cL_r } ( \phi ) } = 2 \int_{[a,b]^d} \br[\big]{\Rect  _\infty  \rbr[\big]{\b{\phi}_i + \smallsum_{j = 1}^d \w{\phi}_{i,j} x_j } } ( \realapprox{\phi}{\infty}(x) - f ( x ) ) \, \mu ( \d x ) .
\end{equation}
Combining this, \cref{proof:limit:lr:eq2,proof:limit:lr:eq3,proof:limit:lr:eq4} establishes \cref{prop:limit:lr:4,prop:limit:lr:5}. 
\end{cproof}

\subsection{Local Lipschitz continuity properties of the true risk functions}
\label{subsection:loss:lipschitz}

\cfclear
\begin{lemma} \label{lem:realization:lip}
Let $d, \width, \fd \in \N$, $ a \in \R$, $b \in [ a, \infty)$, $f \in C ( [a , b]^d , \R)$ satisfy $\fd = d\width + 2 \width + 1$,
let $\scrN = (\realization{\phi})_{\phi \in \R^{\fd } } \colon \R^{\fd } \to C(\R ^d , \R)$
satisfy for all $\phi = ( \phi_1, \ldots, \phi_\fd) \in \R^{\fd}$, $x = (x_1, \ldots, x_d) \in \R^d$ that
\begin{equation}
\realization{\phi} (x) = \phi_\fd + \smallsum_{i = 1}^\width \phi_{\width ( d+1 )  + i} \max \cu[\big]{\phi_{ \width d + i} + \smallsum_{j = 1}^d \phi_{ (i - 1 ) d + j} x_j , 0 } ,
\end{equation}
let $\mu \colon \cB ( [a,b]^d ) \to [0,1]$ be a probability measure,
let $\norm{ \cdot } \colon \R^\fd \to \R$ and $\cL \colon \R^\fd \to \R$ satisfy for all $\phi =(\phi_1, \ldots, \phi_{\fd} ) \in \R^\fd$ that $\norm{\phi} = [ \sum_{i=1}^\fd \abs*{ \phi_i } ^2 ] ^{1/2}$ and $\cL (\phi) = \int_{[a ,b]^d} (\realization{\phi} (y) - f ( y ) )^2 \, \mu ( \d y )$,
and let $K \subseteq \R^{\fd }$ be compact. Then there exists $\scrL \in \R$ such that for all $\phi, \psi \in K$ it holds that 
\begin{equation} \label{lem:realization:lip:eq}
   \sup\nolimits_{x \in [a,b]^d} \abs{ \realization{\phi} ( x ) - \realization{ \psi} ( x ) } + \abs{ \cL( \phi ) - \cL ( \psi ) } \leq \scrL \norm{ \phi - \psi }.
\end{equation}
\end{lemma}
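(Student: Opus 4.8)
The plan is to reduce everything to two elementary facts: on the compact set $K$ all network parameters are uniformly bounded, and the rectifier $\R \ni u \mapsto \max\{u,0\} \in \R$ is globally $1$-Lipschitz. First I would choose $R \in [1,\infty)$ with $\sup_{\phi \in K}\norm{\phi} \le R$ and set $c = \max\{\abs{a},\abs{b},1\}$ and $B = R(1+cd)$. Then for every $\phi \in K$, every $x=(x_1,\dots,x_d)\in[a,b]^d$, and every $i\in\{1,\dots,\width\}$ one has $\abs{x_j}\le c$, hence $\abs{\phi_{\width d+i}+\smallsum_{j=1}^d\phi_{(i-1)d+j}x_j}\le B$, and, for a second parameter $\psi\in K$, $\abs{(\phi_{\width d+i}-\psi_{\width d+i})+\smallsum_{j=1}^d(\phi_{(i-1)d+j}-\psi_{(i-1)d+j})x_j}\le(1+cd)\norm{\phi-\psi}$, since each coordinate difference of $\phi-\psi$ is bounded by $\norm{\phi-\psi}$.

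For the first term in \cref{lem:realization:lip:eq} I would expand $\realization{\phi}(x)-\realization{\psi}(x)$ by the triangle inequality. The difference of the output biases is bounded by $\abs{\phi_\fd-\psi_\fd}\le\norm{\phi-\psi}$, and for each $i\in\{1,\dots,\width\}$ the corresponding summand is handled by the add-and-subtract splitting
\begin{equation*}
\phi_{\width(d+1)+i}\max\{u^{\phi}_i,0\}-\psi_{\width(d+1)+i}\max\{u^{\psi}_i,0\} = (\phi_{\width(d+1)+i}-\psi_{\width(d+1)+i})\max\{u^{\phi}_i,0\} + \psi_{\width(d+1)+i}\bigl(\max\{u^{\phi}_i,0\}-\max\{u^{\psi}_i,0\}\bigr),
\end{equation*}
where $u^{\phi}_i=\phi_{\width d+i}+\smallsum_{j=1}^d\phi_{(i-1)d+j}x_j$ and $u^{\psi}_i$ is defined analogously. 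Using $\max\{u^{\phi}_i,0\}\le B$, $\abs{\psi_{\width(d+1)+i}}\le R$, the $1$-Lipschitz bound $\abs{\max\{u^{\phi}_i,0\}-\max\{u^{\psi}_i,0\}}\le\abs{u^{\phi}_i-u^{\psi}_i}\le(1+cd)\norm{\phi-\psi}$, and bounding each remaining parameter difference by $\norm{\phi-\psi}$, summation over $i\in\{1,\dots,\width\}$ yields $\sup_{x\in[a,b]^d}\abs{\realization{\phi}(x)-\realization{\psi}(x)}\le\scrL_1\norm{\phi-\psi}$ for some $\scrL_1\in\R$ depending only on $R$, $c$, $d$, $\width$.

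For the second term I would use the difference-of-squares factorization
\begin{equation*}
\cL(\phi)-\cL(\psi)=\int_{[a,b]^d}\bigl(\realization{\phi}(y)-\realization{\psi}(y)\bigr)\bigl(\realization{\phi}(y)+\realization{\psi}(y)-2f(y)\bigr)\,\mu(\d y).
\end{equation*}
Since $\abs{\realization{\phi}(y)}\le R+\width R B$ for all $\phi\in K$ and $y\in[a,b]^d$ (and likewise for $\psi$), and $\sup_{y\in[a,b]^d}\abs{f(y)}<\infty$ by continuity of $f$ on the compact set $[a,b]^d$, the second factor is bounded in absolute value by a constant $\scrL_2\in\R$ uniformly in $\phi,\psi\in K$ and $y\in[a,b]^d$. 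As $\mu$ is a probability measure, it follows that $\abs{\cL(\phi)-\cL(\psi)}\le\scrL_2\sup_{y\in[a,b]^d}\abs{\realization{\phi}(y)-\realization{\psi}(y)}\le\scrL_1\scrL_2\norm{\phi-\psi}$, so that setting $\scrL=\scrL_1(1+\scrL_2)$ gives \cref{lem:realization:lip:eq}. I do not anticipate a genuine obstacle here; the only point needing a little care is keeping all constants manifestly finite and dependent only on $K$ (through $R$) and on the fixed data $a,b,d,\width,f$. Alternatively, as remarked after the statement, the lemma is a direct consequence of Beck et al.~\cite[Theorem 2.36]{BeckJentzenKuckuck2019arXiv}.
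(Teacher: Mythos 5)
Your argument is correct, and for the risk function it coincides with the paper's: the same difference-of-squares factorization $\cL(\phi)-\cL(\psi)=\int(\realization{\phi}-\realization{\psi})(\realization{\phi}+\realization{\psi}-2f)\,\d\mu$, followed by a uniform bound on the second factor and the fact that $\mu$ is a probability measure. Where you diverge is the first half: the paper does not derive the Lipschitz estimate for $\phi\mapsto\realization{\phi}$ by hand but imports it wholesale from Beck et al.~\cite[Theorem 2.36]{BeckJentzenKuckuck2019arXiv}, which gives the explicit bound $\sup_{x\in[a,b]^d}\abs{\realization{\phi}(x)-\realization{\psi}(x)}\leq 2\bfa(d+1)(\width+1)(\max\{1,\norm{\phi},\norm{\psi}\})\norm{\phi-\psi}$ and is then localized to $K$ via compactness. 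Your self-contained derivation (uniform parameter bound $R$ on $K$, global $1$-Lipschitz continuity of $u\mapsto\max\{u,0\}$, and the add-and-subtract splitting across the outer weights and the hidden pre-activations) is elementary and makes the constants fully explicit, at the cost of a page of bookkeeping that the citation avoids; it is also inherently local to $K$, whereas the cited theorem yields a quantitative estimate valid on all of $\R^{\fd}$. A second minor difference: to bound $\abs{\realization{\phi}(y)}$ uniformly you use the direct estimate $R+\width RB$, while the paper reuses its Lipschitz constant together with the observation that $\realization{0}\equiv 0$, writing $\abs{\realization{\phi}(y)}=\abs{\realization{\phi}(y)-\realization{0}(y)}\leq\scrL\norm{\phi}$. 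Both are fine; nothing in your proposal needs repair.
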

\begin{cproof}{lem:realization:lip}
Throughout this proof let $\bfa \in \R$ satisfy $\bfa = \max \{ \abs{a} , \abs{b} , 1\}$.
\Nobs that, e.g., Beck et al.~\cite[Theorem 2.36]{BeckJentzenKuckuck2019arXiv} (applied with $a \with a$, $b \with b$, $d \with \fd$, $L \with 2$, $l_0 \with d$, $l_1 \with \width $, $l_2 \with 1$ in the notation of \cite[Theorem 2.36]{BeckJentzenKuckuck2019arXiv}) and the fact that for all  $\varphi = ( \varphi_1 , \ldots, \varphi_{\fd } ) \in \R^{\fd }$ it holds that $\max_{i \in \{1, 2, \ldots, \fd \}} \abs{ \varphi _ i } \leq \norm{ \varphi }$ demonstrate that for all $\phi, \psi \in \R^{\fd}$ it holds that
\begin{equation} \label{lem:realization:lip:eq1}
    \sup\nolimits_{x \in [a,b]^d} \abs{ \realization{\phi} ( x ) - \realization{ \psi} ( x ) } \leq 2 \bfa (d+1) ( \width + 1 )  ( \max \{1 , \norm{ \phi } , \norm{ \psi } \} ) \norm{ \phi - \psi } .
\end{equation}
Furthermore, \nobs that the fact that $K$ is compact ensures that there exists $\kappa \in [1 , \infty) $ such that for all $ \varphi \in K$ it holds that 
\begin{equation} \label{lem:realization:lip:eq2}
    \norm{ \varphi } \leq \kappa .
\end{equation}
Note that \cref{lem:realization:lip:eq1} and \cref{lem:realization:lip:eq2} show that there exists $\scrL \in \R$ which satisfies for all $\phi, \psi \in K$ that 
\begin{equation} \label{lem:realization:lip:eq3}
\sup\nolimits_{x \in [a,b]^d} \abs{ \realization{\phi} ( x ) - \realization{ \psi} ( x ) } \leq \scrL \norm{ \phi - \psi }.
\end{equation} 
Hence, we obtain that for all $\phi, \psi \in K$ it holds that
\begin{equation} \label{loss:lip:eq1}
    \begin{split}
        \abs{ \cL ( \phi ) - \cL ( \psi ) } &=  \abs*{ \br*{ \int_{[a,b]^d} ( \realization{\phi} (x) - f(x)) ^2 \, \mu ( \d x ) } - \br*{ \int_{[a,b]^d} ( \realization{\psi} (x) - f(x)) ^2 \, \mu ( \d x ) } } \\
        &\leq \int_{[a,b]^d} \abs[\big]{  ( \realization{\phi} (x) - f(x)) ^2 - ( \realization{\psi} (x) - f(x)) ^2 } \, \mu ( \d x ) \\
        &= \int_{[a,b]^d} \abs[\big]{ \realization{\phi} ( x ) - \realization{ \psi} ( x ) }  \abs[\big]{ \realization{\phi} ( x ) + \realization{ \psi} ( x ) - 2 f(x) } \, \mu ( \d x ) \\
        &\leq \scrL \norm{ \phi - \psi } \br*{ \int_{[a,b]^d} \abs[\big]{ \realization{\phi} ( x ) + \realization{ \psi} ( x ) - 2 f (x) } \, \mu (\d x) }.
    \end{split}
\end{equation}
This, \cref{lem:realization:lip:eq2}, \cref{lem:realization:lip:eq3}, and the fact that for all $x \in [a,b]^d$ it holds that $\realization{0} ( x ) = 0$ prove that for all $\phi , \psi \in K$ we have that
\begin{equation}
    \begin{split}
        \abs{\cL ( \phi ) - \cL ( \psi ) } 
       & \leq \scrL \norm{ \phi - \psi } \rbr*{  \sup\nolimits_{x \in [a,b]^d}  \br[\big]{  \abs{ \realization{\phi} ( x ) } + \abs{ \realization{ \psi} ( x ) }  + 2 \abs{f(x) } } } \\
        &=  \scrL \norm{ \phi - \psi } \rbr*{  \sup\nolimits_{x \in [a,b]^d}  \br[\big]{  \abs{ \realization{\phi} ( x ) - \realization{0}(x) } + \abs{ \realization{ \psi} ( x ) -\realization{0} ( x ) }  + 2 \abs{f(x) } } } \\
        &\leq \scrL  \norm{\phi - \psi } \rbr*{\scrL \norm{\phi} + \scrL \norm{\psi} + 2 \br[\big]{ \sup\nolimits_{x \in [a,b]^d }\abs{f( x )} } } \\
        &\leq 2 \scrL \rbr*{ \kappa \scrL +  \br[\big]{ \sup\nolimits_{x \in [a,b]^d }\abs{f( x ) } } } \norm{\phi - \psi}.
    \end{split}
\end{equation}
Combining this with \cref{lem:realization:lip:eq3} establishes \cref{lem:realization:lip:eq}.
\end{cproof}

\subsection{Upper estimates for generalized gradients of the true risk functions}
\label{subsection:gradient:est}

\cfclear
\begin{lemma} \label{lem:gradient:est}
Assume \cref{setting:snn} and let $\phi \in \R^{\fd}$. Then
\begin{equation} 
    \norm{ \cG ( \phi ) } ^2 \leq 4 ( \bfa^2 ( d + 1) \norm{ \phi } ^2 + 1 ) \cL_\infty ( \phi ).
\end{equation}
\end{lemma}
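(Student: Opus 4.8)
The plan is to work directly from the closed-form expression for the generalized gradient supplied by \cref{prop:limit:lr}, and in particular by \cref{prop:limit:lr:5}, which writes each of the $\fd$ components of $\cG(\phi)$ as an integral of $\realapprox{\phi}{\infty} - f$ against $\mu$. Since $\norm{\cG(\phi)}^2$ is the sum of the squares of these components, and since the $\fd = \width d + 2\width + 1$ indices split naturally into the $\width d$ ``inner weight'' components $\cG_{(i-1)d+j}$, the $\width$ ``inner bias'' components $\cG_{\width d + i}$, the $\width$ ``outer weight'' components $\cG_{\width(d+1)+i}$, and the single ``outer bias'' component $\cG_\fd$, I would estimate these four groups one at a time. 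In every case the mechanism is identical: apply the Cauchy--Schwarz inequality for the probability measure $\mu$ so as to factor out $\int_{[a,b]^d}(\realapprox{\phi}{\infty}(x) - f(x))^2\,\mu(\d x) = \cL_\infty(\phi)$, and bound the remaining factor using $\abs{x_j}\le\bfa$ for $x\in[a,b]^d$, the bound $\mu(I_i^\phi)\le\mu([a,b]^d)=1$, and $\bfa\ge1$.

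For the inner weight components, Cauchy--Schwarz gives $\abs{\cG_{(i-1)d+j}(\phi)}^2\le 4\abs{\v{\phi}_i}^2\cL_\infty(\phi)\int_{I_i^\phi}\abs{x_j}^2\,\mu(\d x)$, and summing over $j$ with $\smallsum_{j=1}^d\abs{x_j}^2\le d\bfa^2$ and $\mu(I_i^\phi)\le1$ yields $\smallsum_{j=1}^d\abs{\cG_{(i-1)d+j}(\phi)}^2\le 4 d\bfa^2\abs{\v{\phi}_i}^2\cL_\infty(\phi)$. For the inner bias components the same computation without the $x_j$-factor gives $\abs{\cG_{\width d+i}(\phi)}^2\le 4\abs{\v{\phi}_i}^2\cL_\infty(\phi)$. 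For the outer weight components I would use $(\Rect_\infty(t))^2\le t^2$ together with the Cauchy--Schwarz estimate $(\b{\phi}_i + \smallsum_k\w{\phi}_{i,k}x_k)^2\le(\abs{\b{\phi}_i}^2 + \smallsum_{j=1}^d\abs{\w{\phi}_{i,j}}^2)(1 + \smallsum_{j=1}^d\abs{x_j}^2)\le(d+1)\bfa^2(\abs{\b{\phi}_i}^2 + \smallsum_{j=1}^d\abs{\w{\phi}_{i,j}}^2)$, so that $\abs{\cG_{\width(d+1)+i}(\phi)}^2\le 4(d+1)\bfa^2(\abs{\b{\phi}_i}^2 + \smallsum_{j=1}^d\abs{\w{\phi}_{i,j}}^2)\cL_\infty(\phi)$. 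Finally the outer bias component satisfies $\abs{\cG_\fd(\phi)}^2\le 4\cL_\infty(\phi)$ by Jensen's inequality (i.e.\ Cauchy--Schwarz with the constant function $1$).

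Summing the four groups, I would first use $d\bfa^2 + 1\le(d+1)\bfa^2$ to combine the inner-weight and inner-bias contributions into $4(d+1)\bfa^2\cL_\infty(\phi)\smallsum_{i=1}^\width\abs{\v{\phi}_i}^2$, then add the outer-weight contribution $4(d+1)\bfa^2\cL_\infty(\phi)\smallsum_{i=1}^\width(\abs{\b{\phi}_i}^2 + \smallsum_{j=1}^d\abs{\w{\phi}_{i,j}}^2)$; since $\smallsum_{i=1}^\width\abs{\v{\phi}_i}^2 + \smallsum_{i=1}^\width\abs{\b{\phi}_i}^2 + \smallsum_{i=1}^\width\smallsum_{j=1}^d\abs{\w{\phi}_{i,j}}^2\le\norm{\phi}^2$, these three groups together are at most $4\bfa^2(d+1)\norm{\phi}^2\cL_\infty(\phi)$, and adding the leftover $4\cL_\infty(\phi)$ from the outer-bias term gives exactly $4(\bfa^2(d+1)\norm{\phi}^2 + 1)\cL_\infty(\phi)$, as claimed.

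This argument involves no serious analytic obstacle; the only point requiring some care is the bookkeeping in the final step, namely not estimating $\smallsum_{i=1}^\width\abs{\v{\phi}_i}^2$ by $\norm{\phi}^2$ prematurely, so that the $\v$-, $\b$-, and $\w$-squared contributions assemble into a single copy of $\norm{\phi}^2$ and the constant in front of $\cL_\infty(\phi)$ comes out as the sharp $4$ rather than a larger constant or one carrying a spurious factor of $\width$.
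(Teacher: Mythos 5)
Your proposal is correct and follows essentially the same route as the paper: the same componentwise decomposition via the explicit gradient formulas of \cref{prop:limit:lr}, the same Jensen/Cauchy--Schwarz step to extract $\cL_\infty(\phi)$, the same pointwise bounds $\abs{x_j}\le\bfa$ and $\abs{\Rect_\infty(t)}^2\le \bfa^2(d+1)(\abs{\b{\phi}_i}^2+\smallsum_j\abs{\w{\phi}_{i,j}}^2)$, and the same final bookkeeping $d\bfa^2+1\le(d+1)\bfa^2$ to assemble $\norm{\phi}^2$. The only differences (applying Cauchy--Schwarz with the weight $x_j$ rather than bounding $\abs{x_j}\le\bfa$ first, and splitting the outer-weight integral by Cauchy--Schwarz rather than Jensen plus a pointwise bound) are immaterial.
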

\begin{cproof} {lem:gradient:est}
\Nobs that Jensen's inequality implies that
\begin{equation} \label{lem:grad:est:eq1}
    \rbr*{  \int_{[a,b]^d} \abs{ \realapprox{\phi}{\infty} ( x ) - f ( x ) } \, \mu ( \d x )  } ^{\! \! 2} \leq \int_{[a,b]^d} \rbr{  \realapprox{\phi}{\infty} ( x ) - f ( x )  } ^2 \, \mu ( \d x ) = \cL_\infty ( \phi ).
\end{equation}
Combining this and \cref{eq:loss:gradient} demonstrates that for all $i \in \{1, 2, \ldots, \width \}$, $j \in \{1,2, \ldots, d \}$ it holds that
\begin{equation} \label{eq:lem:gradient:est1}
    \begin{split}
        \abs{ \cG_{ (i - 1 ) d + j }( \phi) } ^2 &= 4 (\v{\phi}_i) ^2 \rbr*{ \int_{I_i^\phi} x_j ( \realapprox{\phi}{\infty} (x) - f ( x )  ) \, \mu ( \d x )  } ^{\! \! 2} \\
        & \leq 4 (\v{\phi}_i) ^2 \rbr*{ \int_{I_i^\phi} \abs{x_j}  \abs{ \realapprox{\phi}{\infty} (x) - f ( x )  } \, \mu ( \d x )  } ^{\! \! 2} \\
        &\leq 4 \bfa ^2 (\v{\phi}_i) ^2 \rbr*{ \int_{[a,b]^d} \abs{ \realapprox{\phi}{\infty} (x) - f ( x ) } \, \mu ( \d x )  } ^{\! \! 2} \leq 4 \bfa ^2 (\v{\phi}_i)^2 \cL_\infty (\phi).
    \end{split}
\end{equation}
Next \nobs that \cref{eq:loss:gradient,lem:grad:est:eq1} prove that for all $i \in \{1,2, \ldots, \width \}$ we have that
\begin{equation} \label{eq:lem:gradient:est2}
\begin{split}
         \abs{ \cG_{\width d + i}( \phi) }^2 &= 4 (\v{\phi}_i) ^2 \rbr*{ \int_{I_i^\phi} (\realapprox{\phi}{\infty} (x) - f ( x )  ) \, \mu ( \d x )  } ^{\! \! 2} \\
         &\leq 4 (\v{\phi}_i) ^2  \rbr*{ \int_{[a,b]^d}   \abs{\realapprox{\phi}{\infty} (x) - f ( x ) } \, \mu ( \d x )  } ^{ \! \! 2} \leq 4 (\v{\phi}_i)^2 \cL_\infty (\phi).
         \end{split}
\end{equation}
Furthermore, \nobs that the fact that for all $x = (x_1, \ldots, x_d) \in [a,b]^d$, $i \in \{1,2, \ldots, \width \}$ it holds that $\abs{ \Rect _\infty \rbr{ \b{\phi}_i + \smallsum_{j = 1}^d \w{\phi}_{i,j} x_j } } ^2 \leq  \rbr{ \abs{ \b{\phi}_i } + \bfa  \smallsum_{j = 1}^d \abs{\w{\phi}_{i,j} } } ^2 \leq \bfa^2 (d+1) \rbr{ \abs{ \b{\phi}_i }^2 + \smallsum_{j = 1}^d \abs{\w{\phi}_{i,j} }^2  }$ and \cref{eq:loss:gradient} assure that for all $i \in \{1,2, \ldots, \width \}$ it holds that
\begin{equation} \label{eq:lem:gradient:est3}
\begin{split}
    \abs{ \cG_{ \width ( d+1 )  + i} ( \phi ) } ^2 &= 4 \rbr*{ \int_{[a,b]^d} \br[\big]{\Rect _\infty \rbr[\big]{ \b{\phi}_i + \smallsum_{j = 1}^d \w{\phi}_{i,j} x_j } } ( \realapprox{\phi}{\infty}(x) - f ( x )  ) \, \mu ( \d x )  } ^{\! \! 2 } \\ 
    &\leq 4 \int_{[a,b]^d} \abs[\big]{ \Rect _\infty \rbr[\big]{ \b{\phi}_i + \smallsum_{j = 1}^d \w{\phi}_{i,j} x_j } } ^2 ( \realapprox{\phi}{\infty}(x) - f ( x )  ) ^2 \, \mu ( \d x ) \\
    &\leq 4 \bfa ^2 (d+1) \br*{ \abs{ \b{\phi}_i }^2 + \smallsum_{j = 1}^d \abs{\w{\phi}_{i,j} }^2 } \cL_\infty (\phi).
    \end{split}
\end{equation}
Moreover, \nobs that \cref{eq:loss:gradient,lem:grad:est:eq1} show that
\begin{equation} \label{eq:lem:gradient:est4}
    \abs{ \cG _ { \fd } ( \phi ) } ^2 = 4 \rbr*{ \int_{[a,b]^d} (\realapprox{\phi}{\infty} (x) - f ( x )   ) \, \mu ( \d x )  } ^{\! \! 2 } \leq 4 \cL_\infty ( \phi ).
\end{equation}
Combining this with \cref{eq:lem:gradient:est1}, \cref{eq:lem:gradient:est2}, and \cref{eq:lem:gradient:est3} ensures that
\begin{equation}
\begin{split}
    &\norm{ \cG ( \phi ) } ^2 \\
    &\leq 4 \br*{ \smallsum_{i = 1}^\width \rbr*{\bfa^2 \br*{ \sum_{j = 1}^d  \abs{\v{\phi}_i} ^2 } + \abs{\v{\phi}_i} ^2 + \bfa^2 (d+1) \br*{ \abs{\b{\phi}_i} ^2 + \smallsum_{j = 1}^d \abs{\w{\phi}_{i,j}} ^2  } } } 
    \cL_\infty ( \phi )  + 4 \cL_\infty ( \phi ) \\
    &\leq 4(\bfa^2(d+1) \norm{ \phi } ^2 + 1) \cL_\infty ( \phi ).
    \end{split}
\end{equation}
\end{cproof}

\cfclear
\begin{cor} \label{cor:g:bounded} 
Assume \cref{setting:snn} and let $K \subseteq \R^{\fd}$ be compact. Then $\sup_{\phi \in K} \norm{ \cG ( \phi ) } < \infty$.
\end{cor}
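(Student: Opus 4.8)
The plan is to combine the pointwise bound from \cref{lem:gradient:est} with the boundedness of the Euclidean norm and of the true risk function $\cL_\infty$ on the compact set $K$. Indeed, \cref{lem:gradient:est} shows that for every $\phi \in \R^\fd$ we have $\norm{\cG(\phi)}^2 \leq 4(\bfa^2(d+1)\norm{\phi}^2 + 1)\cL_\infty(\phi)$, so it suffices to verify that $\sup_{\phi \in K}\norm{\phi} < \infty$ and $\sup_{\phi \in K}\cL_\infty(\phi) < \infty$.

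If $K = \emptyset$ the claim is trivial, so I would assume $K \neq \emptyset$. Compactness of $K$ implies that $K$ is bounded, which immediately gives $\sup_{\phi \in K}\norm{\phi} < \infty$ as well as $\sup_{\phi, \psi \in K}\norm{\phi - \psi} < \infty$. For the second supremum I would invoke \cref{lem:realization:lip}, applied with $\scrN = \scrN_\infty$ and $\cL = \cL_\infty$ (this is legitimate because $\Rect_\infty(x) = \max\{x,0\}$, so the realization functions in \cref{lem:realization:lip} agree with the functions $\realapprox{\cdot}{\infty}$ of \cref{setting:snn} and hence the associated risk functions coincide). \cref{lem:realization:lip} then provides some $\scrL \in \R$ with $\abs{\cL_\infty(\phi) - \cL_\infty(\psi)} \leq \scrL\norm{\phi - \psi}$ for all $\phi, \psi \in K$; fixing an arbitrary $\psi_0 \in K$ and using the triangle inequality yields $\sup_{\phi \in K}\cL_\infty(\phi) \leq \cL_\infty(\psi_0) + \scrL(\sup_{\phi, \psi \in K}\norm{\phi - \psi}) < \infty$.

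Substituting these two finite quantities back into the estimate from \cref{lem:gradient:est} gives $\sup_{\phi \in K}\norm{\cG(\phi)}^2 \leq 4(\bfa^2(d+1)(\sup_{\phi \in K}\norm{\phi})^2 + 1)(\sup_{\phi \in K}\cL_\infty(\phi)) < \infty$, and taking square roots completes the proof. I expect no genuine obstacle here: the only points that merit a moment's attention are checking that \cref{lem:realization:lip} is indeed applicable to $\cL_\infty$ from \cref{setting:snn} (i.e.\ matching $\Rect_\infty$ with $\max\{\cdot,0\}$) and, if one wants to be pedantic, disposing of the trivial case $K = \emptyset$.
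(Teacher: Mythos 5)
Your proposal is correct and follows exactly the paper's own route: it combines the pointwise estimate of \cref{lem:gradient:est} with the boundedness of $\cL_\infty$ on the compact set $K$, the latter deduced from the Lipschitz estimate of \cref{lem:realization:lip}. The additional details (fixing a base point $\psi_0$, checking $\Rect_\infty = \max\{\cdot,0\}$, and disposing of $K=\emptyset$) merely spell out what the paper leaves implicit.
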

\begin{proof} [Proof of \cref{cor:g:bounded}]
Observe that \cref{lem:realization:lip} and the assumption that $K$ is compact ensure that $\sup_{\phi \in K} \cL_\infty ( \phi ) < \infty$. This and \cref{lem:gradient:est} complete the proof of \cref{cor:g:bounded}.
\end{proof}

\subsection{Upper estimates associated to Lyapunov functions}
\label{subsection:lyapunov:elementary}

\begin{lemma} \label{prop:lyapunov:norm} 
Let $\fd \in \N$, $\xi \in \R$ and let $\norm{ \cdot } \colon \R^\fd \to \R$ and $V \colon \R^\fd \to \R$ satisfy for all $\phi = (\phi_1, \ldots, \phi_\fd ) \in \R^\fd$ that $\norm{\phi} = [ \sum_{i=1}^\fd \abs*{ \phi_i } ^2 ] ^{1/2}$ and $V ( \phi ) = \norm{\phi}^2 + \abs{ \phi_\fd - 2 \xi } ^2$. Then it holds for all $\phi \in \R^{\fd}$ that 
\begin{equation} \label{prop:lyapunov:norm:eq1}
    \norm{ \phi } ^2 \leq V(\phi) \leq 3 \norm{ \phi } ^2 + 8 \xi ^2.
\end{equation}
\end{lemma}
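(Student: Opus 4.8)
The plan is to treat the two inequalities in \cref{prop:lyapunov:norm:eq1} separately, both by completely elementary manipulations. For the lower bound I would simply note that $\abs{\phi_\fd - 2\xi}^2 \geq 0$, so that $V(\phi) = \norm{\phi}^2 + \abs{\phi_\fd - 2\xi}^2 \geq \norm{\phi}^2$ for every $\phi \in \R^\fd$. No further work is needed here.

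For the upper bound I would exploit the triangle inequality together with the elementary estimate $(u+w)^2 \leq 2u^2 + 2w^2$, which holds for all $u, w \in \R$. Applying this with $u = \phi_\fd$ and $w = -2\xi$ gives $\abs{\phi_\fd - 2\xi}^2 \leq 2\abs{\phi_\fd}^2 + 8\xi^2$. Since $\phi_\fd$ is one of the coordinates of $\phi$, we have $\abs{\phi_\fd}^2 \leq \sum_{i=1}^\fd \abs{\phi_i}^2 = \norm{\phi}^2$, and hence $\abs{\phi_\fd - 2\xi}^2 \leq 2\norm{\phi}^2 + 8\xi^2$. Substituting this into the definition of $V$ yields $V(\phi) = \norm{\phi}^2 + \abs{\phi_\fd - 2\xi}^2 \leq \norm{\phi}^2 + 2\norm{\phi}^2 + 8\xi^2 = 3\norm{\phi}^2 + 8\xi^2$, as claimed.

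Combining the two bounds displayed above establishes \cref{prop:lyapunov:norm:eq1} for all $\phi \in \R^\fd$. There is no real obstacle here; the only point to be careful about is the choice of constant in the auxiliary inequality $(u+w)^2 \leq 2u^2 + 2w^2$, which is exactly what produces the factor $3$ in front of $\norm{\phi}^2$ and the $8\xi^2$ additive term.
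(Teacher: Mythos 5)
Your proposal is correct and follows essentially the same route as the paper: the lower bound from nonnegativity of $\abs{\phi_\fd - 2\xi}^2$, and the upper bound from the elementary inequality $(x-y)^2 \leq 2(x^2+y^2)$ together with $\abs{\phi_\fd}^2 \leq \norm{\phi}^2$. Nothing to add.
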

\begin{cproof}{prop:lyapunov:norm}
\Nobs that the fact that for all $\phi \in \R^{\fd}$ it holds that $\abs{ \phi_\fd - 2 \xi } ^2 \geq 0$ assures that for all $\phi \in \R^{\fd}$ we have that
\begin{equation} \label{prop:lyapunov:norm:eq2}
V(\phi) = \norm{ \phi } ^2 + \abs{ \phi_\fd - 2 \xi } ^2 \geq \norm{ \phi } ^2.
\end{equation}
Furthermore, \nobs that the fact that for all $x , y \in \R$ it holds that $(x - y )^2 \leq 2(x^2 + y^2)$ ensures that for all $\phi \in \R^{\fd}$ it holds that
\begin{equation}
    V (\phi) \leq \norm{ \phi } ^2 + 2 (\phi_\fd) ^2 + 8 \xi ^2 \leq 3 \norm{ \phi } ^2 + 8 \xi ^2.
\end{equation}
Combining this with \cref{prop:lyapunov:norm:eq2} establishes \cref{prop:lyapunov:norm:eq1}. 
\end{cproof}

\begin{prop} \label{prop:v:gradient}
Let $\fd \in \N$, $\xi \in \R$ and let $V \colon \R^\fd \to \R$ satisfy for all $\phi = ( \phi_1, \ldots, \phi_\fd) \in \R^\fd$ that $V ( \phi ) = \br{  \sum_{i=1}^\fd \abs{\phi_i}^2 } + \abs{\phi_\fd - 2 \xi } ^2$. Then
\begin{enumerate} [label = (\roman*)]
    \item \label{prop:v:gradient:item1} it holds for all $\phi  = ( \phi_1, \ldots, \phi_\fd) \in \R^\fd$ that
    \begin{equation}
    (\nabla V ) ( \phi) = 2 \phi + \rbr[\big]{  0, 0, \ldots, 0, 2 \br{ \phi_\fd - 2 \xi } } 
\end{equation}
and
\item \label{prop:v:gradient:item2} it holds for all $\phi = ( \phi_1, \ldots, \phi_\fd)$, $\psi = ( \psi_1, \ldots, \psi_\fd) \in \R^{\fd}$ that
\begin{equation} \label{eq:prop:v:gradient}
    (\nabla V)(\phi) - (\nabla V)(\psi) = 2(\phi - \psi ) + \rbr[\big]{ 0, 0, \ldots, 0, 2 (\phi_\fd - \psi_\d ) }.
\end{equation}
\end{enumerate}
\end{prop}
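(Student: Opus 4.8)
The plan is to compute the gradient of $V$ coordinate by coordinate directly from its definition, which is legitimate since $V(\phi) = \sum_{i=1}^{\fd} |\phi_i|^2 + |\phi_\fd - 2\xi|^2$ is manifestly a polynomial, hence smooth, function of $\phi = (\phi_1, \ldots, \phi_\fd)$. First I would observe that for each index $k \in \{1, 2, \ldots, \fd - 1\}$ the only summand in this expression that depends on the variable $\phi_k$ is $|\phi_k|^2$, so that $\tfrac{\partial V}{\partial \phi_k}(\phi) = 2\phi_k$. For the last index $k = \fd$ both $|\phi_\fd|^2$ and $|\phi_\fd - 2\xi|^2$ depend on $\phi_\fd$, and differentiating (using the chain rule on the second summand) gives $\tfrac{\partial V}{\partial \phi_\fd}(\phi) = 2\phi_\fd + 2(\phi_\fd - 2\xi)$. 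Assembling these $\fd$ partial derivatives into the gradient vector and peeling off the one anomalous contribution in the last coordinate yields exactly $(\nabla V)(\phi) = 2\phi + (0, 0, \ldots, 0, 2(\phi_\fd - 2\xi))$, which is item \ref{prop:v:gradient:item1}.

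For item \ref{prop:v:gradient:item2} I would simply subtract the identity from item \ref{prop:v:gradient:item1} evaluated at $\psi$ from the same identity evaluated at $\phi$: the terms $2\phi$ and $2\psi$ combine to $2(\phi - \psi)$, while in the last coordinate the constant shifts $-4\xi$ cancel, leaving $2(\phi_\fd - 2\xi) - 2(\psi_\fd - 2\xi) = 2(\phi_\fd - \psi_\fd)$. This produces $(\nabla V)(\phi) - (\nabla V)(\psi) = 2(\phi - \psi) + (0, 0, \ldots, 0, 2(\phi_\fd - \psi_\fd))$, as claimed.

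There is no genuine obstacle here: the statement is an elementary exercise in differentiating a quadratic polynomial, and the only point requiring a little care is correctly tracking the single special coordinate $\phi_\fd$, to which the extra summand $|\phi_\fd - 2\xi|^2$ contributes. The purpose of this proposition is purely organizational — it records the explicit form of $\nabla V$ so that the subsequent Lyapunov-type computations (pairing $\nabla V$ against the generalized gradient $\cG$ from \cref{prop:limit:lr}) can be performed cleanly.
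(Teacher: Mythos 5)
Your proposal is correct and matches the paper's proof, which likewise obtains item \ref{prop:v:gradient:item1} by direct differentiation of the defining quadratic expression for $V$ (with the extra contribution $2(\phi_\fd - 2\xi)$ in the last coordinate) and then derives item \ref{prop:v:gradient:item2} by subtracting the identity at $\psi$ from the identity at $\phi$. The paper simply records these two steps more tersely than you do.
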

\begin{cproof} {prop:v:gradient}
\Nobs that the assumption that for all $\phi \in \R^{\fd}$ it holds that $V ( \phi ) = \sum_{i=1}^\fd \abs{\phi_i}^2 + \abs{\phi_\fd - 2 \xi } ^2$ proves \cref{prop:v:gradient:item1}.
Moreover, \nobs that \cref{prop:v:gradient:item1} establishes \cref{prop:v:gradient:item2}.
\end{cproof}

\begin{prop} \label{prop:lyapunov:gradient} 
Assume \cref{setting:snn}
and let $\phi \in \R^{\fd}$. Then
\begin{equation}
    \langle (\nabla V ) ( \phi), \cG ( \phi ) \rangle = 8 \int_{[a,b]^d} ( \realapprox{\phi}{\infty}(x) - f(0) ) ( \realapprox{\phi}{\infty}( x ) - f ( x ) ) \, \mu ( \d x ).
\end{equation}
\end{prop}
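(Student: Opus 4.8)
The plan is to combine the explicit formula for $(\nabla V)(\phi)$ furnished by \cref{prop:v:gradient} with the explicit representation of the generalized gradient $\cG(\phi)$ in \cref{eq:loss:gradient} of \cref{prop:limit:lr}, and to reassemble the resulting finite sum of integrals into the single integral claimed in the proposition. Recall from \cref{setting:snn} that $V(\phi) = \norm{\phi}^2 + \abs{\c{\phi} - 2 f(0)}^2$, so \cref{prop:v:gradient:item1} (applied with $\xi \with f(0)$) yields $(\nabla V)(\phi) = 2\phi + (0, \dots, 0, 2(\c{\phi} - 2 f(0)))$. Consequently
\[
\langle (\nabla V)(\phi), \cG(\phi) \rangle = 2 \langle \phi, \cG(\phi) \rangle + 2 (\c{\phi} - 2 f(0)) \, \cG_{\fd}(\phi),
\]
and the heart of the argument is the evaluation of the inner product $\langle \phi, \cG(\phi) \rangle = \sum_{i=1}^{\fd} \phi_i \cG_i(\phi)$.

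Here I would split the sum $\sum_{i=1}^{\fd} \phi_i \cG_i(\phi)$ into the four blocks dictated by the index conventions of \cref{setting:snn}: the inner weights $\w{\phi}_{i,j} = \phi_{(i-1)d+j}$, the inner biases $\b{\phi}_i = \phi_{\width d + i}$, the outer weights $\v{\phi}_i = \phi_{\width(d+1)+i}$, and the outer bias $\c{\phi} = \phi_{\fd}$; into each block I substitute the corresponding line of \cref{eq:loss:gradient}. Summing the inner-weight block over $j$ and adding the inner-bias block produces, for each $i \in \{1, \dots, \width\}$, the integrand factor $\b{\phi}_i + \smallsum_{j=1}^d \w{\phi}_{i,j} x_j$ integrated against $2 \v{\phi}_i (\realapprox{\phi}{\infty}(x) - f(x))$ over $I_i^\phi$. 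The key observation is that on $I_i^\phi$ this factor is positive by the very definition of $I_i^\phi$, hence equals $\Rect_\infty(\b{\phi}_i + \smallsum_{j=1}^d \w{\phi}_{i,j} x_j)$, whereas $\Rect_\infty(\b{\phi}_i + \smallsum_{j=1}^d \w{\phi}_{i,j} x_j)$ vanishes on $[a,b]^d \setminus I_i^\phi$; equivalently $\Rect_\infty(\b{\phi}_i + \smallsum_{j=1}^d \w{\phi}_{i,j} x_j) = (\b{\phi}_i + \smallsum_{j=1}^d \w{\phi}_{i,j} x_j) \indicator{I_i^\phi}(x)$ for $x \in [a,b]^d$. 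This lets me rewrite the combined inner-weight-plus-inner-bias block as $2 \smallsum_{i=1}^{\width} \v{\phi}_i \int_{[a,b]^d} [\Rect_\infty(\b{\phi}_i + \smallsum_{j=1}^d \w{\phi}_{i,j} x_j)] (\realapprox{\phi}{\infty}(x) - f(x)) \, \mu(\d x)$, which is exactly the outer-weight block. Hence these three blocks together equal four times that integral, and since $\smallsum_{i=1}^{\width} \v{\phi}_i \Rect_\infty(\b{\phi}_i + \smallsum_{j=1}^d \w{\phi}_{i,j} x_j) = \realapprox{\phi}{\infty}(x) - \c{\phi}$ by \cref{setting:snn:eq:realization}, their sum equals $4 \int_{[a,b]^d} (\realapprox{\phi}{\infty}(x) - \c{\phi})(\realapprox{\phi}{\infty}(x) - f(x)) \, \mu(\d x)$.

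Adding the remaining outer-bias block $\c{\phi} \cG_{\fd}(\phi) = 2 \c{\phi} \int_{[a,b]^d} (\realapprox{\phi}{\infty}(x) - f(x)) \, \mu(\d x)$ and collecting the $\c{\phi}$-terms gives $\langle \phi, \cG(\phi) \rangle = 2 \int_{[a,b]^d} (2 \realapprox{\phi}{\infty}(x) - \c{\phi})(\realapprox{\phi}{\infty}(x) - f(x)) \, \mu(\d x)$. Substituting this together with the fourth line of \cref{eq:loss:gradient}, namely $\cG_{\fd}(\phi) = 2 \int_{[a,b]^d} (\realapprox{\phi}{\infty}(x) - f(x)) \, \mu(\d x)$, into the displayed identity for $\langle (\nabla V)(\phi), \cG(\phi) \rangle$ makes the $\c{\phi}$-contributions cancel, leaving $4 \int_{[a,b]^d} (2 \realapprox{\phi}{\infty}(x) - 2 f(0))(\realapprox{\phi}{\infty}(x) - f(x)) \, \mu(\d x)$, which is precisely the assertion of the proposition. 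I do not anticipate any genuine difficulty here: the argument is essentially bookkeeping, and the only steps that require attention are the block decomposition of $\langle \phi, \cG(\phi) \rangle$ and the elementary identity $\Rect_\infty(\b{\phi}_i + \smallsum_{j=1}^d \w{\phi}_{i,j} x_j) = (\b{\phi}_i + \smallsum_{j=1}^d \w{\phi}_{i,j} x_j) \indicator{I_i^\phi}(x)$ that converts the integrals over $I_i^\phi$ appearing in \cref{eq:loss:gradient} into integrals over $[a,b]^d$.
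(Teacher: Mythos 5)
Your proposal is correct and follows essentially the same route as the paper's proof: substitute the explicit formulas from \cref{prop:v:gradient} and \cref{eq:loss:gradient}, merge the inner-weight and inner-bias blocks via the identity $\Rect_\infty\rbr{\b{\phi}_i + \smallsum_{j=1}^d \w{\phi}_{i,j}x_j} = \rbr{\b{\phi}_i + \smallsum_{j=1}^d \w{\phi}_{i,j}x_j}\indicator{I_i^\phi}(x)$ so that they coincide with the outer-weight block, and then recombine using \cref{setting:snn:eq:realization}. Your separation of $\langle\phi,\cG(\phi)\rangle$ from the last-coordinate correction term is only a bookkeeping variant of the paper's all-at-once substitution, and your final cancellation of the $\c{\phi}$-terms checks out.
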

\begin{cproof2}{prop:lyapunov:gradient}
\Nobs that \cref{prop:v:gradient} demonstrates that
\begin{align}
    &(\nabla V) ( \phi ) \\
    &= 2 \rbr[\big]{ \w{\phi}_{1,1}, \ldots, \w{\phi}_{1 , d}, \w{\phi}_{2 , 1}, \ldots, \w{\phi}_{2 , d}, \ldots, \w{\phi}_{\width , 1}, \ldots, \w{\phi}_{\width , d },  
     \b{\phi}_1, \ldots, \b{\phi}_{\width}, \v{\phi}_1, \ldots, \v{\phi}_{\width},  2 ( \c{\phi} - f(0) } . \nonumber
\end{align} 
This and \cref{eq:loss:gradient} imply that
\begin{equation}
    \begin{split}
        &\langle (\nabla V) ( \phi) , \cG(\phi) \rangle \\ 
        &= 4 \br*{ \sum_{i = 1}^\width \sum_{j = 1}^d \rbr*{ \w{\phi}_{i,j} \v{\phi}_i \int_{I_i^\phi} x_j (\realapprox{\phi}{\infty} (x) - f(x) ) \, \mu ( \d x ) } } \\
        &+ \br*{ 4 \sum_{i = 1}^\width \rbr*{ \b{\phi}_i \v{\phi}_i \int_{I_i^\phi}  (\realapprox{\phi}{\infty} (x) - f(x) ) \, \mu ( \d x ) } } \\
        &+ 4 \br*{ \sum_{i = 1}^\width \rbr*{ \v{\phi}_i \int_{[a,b]^d} \br[\big]{ \Rect _\infty \rbr[\big]{ \b{\phi}_i + \smallsum_{j = 1}^d \w{\phi}_{i,j} x_j } } ( \realapprox{\phi}{\infty}(x) - f ( x ) ) \, \mu ( \d x ) } } \\
        &+ 8(\c{\phi} - f ( 0 )) \br*{ \int_{[a,b]^d} (\realapprox{\phi}{\infty} (x) - f(x) ) \, \mu ( \d x ) }.
    \end{split}
\end{equation}
Hence, we obtain that
\begin{equation} 
    \begin{split} 
     &\langle (\nabla V) ( \phi) , \cG(\phi) \rangle \\ 
        &= 4 \br*{ \sum_{i = 1}^\width \rbr*{ \v{\phi}_i \int_{I_i^\phi} \rbr[\big]{ \b{\phi}_i + \smallsum_{j = 1}^d \w{\phi}_{i,j} x_j } (\realapprox{\phi}{\infty} (x) - f(x) ) \, \mu ( \d x ) } } \\
         &+ 4 \br*{ \sum_{i = 1}^\width \rbr*{ \v{\phi}_i \int_{[a,b]^d} \br[\big]{ \Rect _\infty \rbr[\big]{ \b{\phi}_i + \smallsum_{j = 1}^d \w{\phi}_{i,j} x_j } } ( \realapprox{\phi}{\infty}(x) - f ( x ) ) \, \mu ( \d x ) } } \\
        &+ 8 \int_{[a,b]^d} (\c{\phi} - f ( 0 )) (\realapprox{\phi}{\infty} (x) - f(x) ) \, \mu ( \d x ) \\
        & = 8 \int_{[a,b]^d} \rbr*{ (\c{\phi} - f(0) ) + \smallsum_{i = 1}^\width \br[\big]{ \v{\phi}_i \br[\big]{ \Rect _\infty \rbr[\big]{ \b{\phi}_i + \smallsum_{j = 1}^d \w{\phi}_{i,j} x_j } } } } (\realapprox{\phi}{\infty} (x) - f(x) ) \, \mu ( \d x ) \\
        &= 8 \int_{[a,b]^d} ( \realapprox{\phi}{\infty}(x) - f(0) ) ( \realapprox{\phi}{\infty}( x ) - f ( x ) ) \, \mu ( \d x ).
    \end{split}
\end{equation}
\end{cproof2}

\begin{cor} \label{cor:lyapunov:const}
Assume \cref{setting:snn}, assume for all $x \in [a,b]^d$ that $f(x) = f(0)$,
and let $\phi \in \R^{\fd}$. Then $\langle (\nabla V ) ( \phi ) , \cG ( \phi ) \rangle = 8 \cL_\infty ( \phi)$.
\end{cor}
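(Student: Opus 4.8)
The statement follows almost immediately from \cref{prop:lyapunov:gradient}. The plan is to start from the identity
\begin{equation*}
  \langle (\nabla V)(\phi), \cG(\phi) \rangle = 8 \int_{[a,b]^d} ( \realapprox{\phi}{\infty}(x) - f(0) )( \realapprox{\phi}{\infty}(x) - f(x) ) \, \mu(\d x)
\end{equation*}
provided by \cref{prop:lyapunov:gradient} (which is available since \cref{setting:snn} is in force), and then to exploit the additional hypothesis that $f$ is constant on $[a,b]^d$.

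First I would observe that the assumption that for all $x \in [a,b]^d$ it holds that $f(x) = f(0)$ ensures that for all $x \in [a,b]^d$ we have $\realapprox{\phi}{\infty}(x) - f(0) = \realapprox{\phi}{\infty}(x) - f(x)$. Substituting this into the integrand above shows that
\begin{equation*}
  \langle (\nabla V)(\phi), \cG(\phi) \rangle = 8 \int_{[a,b]^d} ( \realapprox{\phi}{\infty}(x) - f(x) )^2 \, \mu(\d x).
\end{equation*}
Finally I would invoke the definition of $\cL_\infty$ in \cref{setting:snn}, namely $\cL_\infty(\phi) = \int_{[a,b]^d} ( \realapprox{\phi}{\infty}(y) - f(y) )^2 \, \mu(\d y)$, to conclude that the right-hand side equals $8 \cL_\infty(\phi)$, which is the claimed identity.

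\textbf{Main obstacle.} There is essentially no obstacle here: the argument is a one-line substitution into the formula of \cref{prop:lyapunov:gradient} together with recognizing the definition of the true risk $\cL_\infty$. The only point requiring a word of care is that the replacement $f(x) \rightsquigarrow f(0)$ in the integrand is legitimate because the equality $f(x) = f(0)$ is assumed to hold for every $x \in [a,b]^d$ (so in particular $\mu$-almost everywhere), after which the two factors in the integrand coincide and the integral becomes the second moment defining $\cL_\infty(\phi)$.
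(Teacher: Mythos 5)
Your proposal is correct and follows essentially the same route as the paper: both invoke \cref{prop:lyapunov:gradient} for the integral representation of $\langle (\nabla V)(\phi), \cG(\phi)\rangle$ and then use the constancy of $f$ to identify that integral with $8\cL_\infty(\phi)$. The only cosmetic difference is the direction of the rewriting (you substitute $f(0)=f(x)$ into the integrand, whereas the paper rewrites $\cL_\infty(\phi)$ as that integral), which is immaterial.
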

\begin{proof} [Proof of \cref{cor:lyapunov:const}]
\Nobs that the fact that for all $x \in [a,b]^d$ it holds that $f(x) = f(0)$ implies that
\begin{equation}
    \cL _ \infty ( \phi ) = \int_{[a,b]^d} ( \realapprox{\phi}{\infty}(x) - f(0) ) ( \realapprox{\phi}{\infty}( x ) - f ( x ) ) \, \mu ( \d x ).
\end{equation}
Combining this with \cref{prop:lyapunov:gradient} completes the proof of \cref{cor:lyapunov:const}.
\end{proof}

\begin{cor} \label{cor:critical:points} 
Assume \cref{setting:snn}, assume for all $x \in [a,b]^d$ that $f(x) = f(0)$, and let $\phi \in \R^{\fd}$. Then it holds that $ \cG(\phi) = 0$ if and only if $\cL_\infty ( \phi ) = 0 $.
\end{cor}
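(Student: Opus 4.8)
The plan is to obtain both implications as immediate consequences of results already established in this section, namely the gradient bound in \cref{lem:gradient:est} and the scalar-product identity in \cref{cor:lyapunov:const}. There is essentially no technical obstacle; the proof is a two-line argument in each direction, and the only point requiring a moment's care is that the hypothesis ``$f(x) = f(0)$ for all $x \in [a,b]^d$'' is exactly what is needed to invoke \cref{cor:lyapunov:const}.

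First I would treat the implication that $\cL_\infty(\phi) = 0$ forces $\cG(\phi) = 0$. Assuming $\cL_\infty(\phi) = 0$, \cref{lem:gradient:est} directly gives
\begin{equation}
\norm{ \cG(\phi) }^2 \leq 4\rbr{ \bfa^2 (d+1) \norm{\phi}^2 + 1 } \cL_\infty(\phi) = 0,
\end{equation}
and hence $\cG(\phi) = 0$. Note that this direction does not even use the hypothesis that $f$ is constant.

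Conversely, to show that $\cG(\phi) = 0$ forces $\cL_\infty(\phi) = 0$, I would assume $\cG(\phi) = 0$, so that by bilinearity of the Euclidean scalar product $\langle (\nabla V)(\phi), \cG(\phi) \rangle = 0$. On the other hand, since $f(x) = f(0)$ for all $x \in [a,b]^d$, \cref{cor:lyapunov:const} yields $\langle (\nabla V)(\phi), \cG(\phi) \rangle = 8 \cL_\infty(\phi)$. Combining these two identities gives $8 \cL_\infty(\phi) = 0$ and therefore $\cL_\infty(\phi) = 0$. Putting the two implications together establishes the claimed equivalence and completes the proof.
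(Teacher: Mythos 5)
Your proof is correct. The direction $\cG(\phi) = 0 \Rightarrow \cL_\infty(\phi) = 0$ is handled exactly as in the paper, via the identity $\langle (\nabla V)(\phi), \cG(\phi)\rangle = 8\,\cL_\infty(\phi)$ from \cref{cor:lyapunov:const}. For the converse direction you take a genuinely different (and slicker) route: the paper argues that $\cL_\infty(\phi) = 0$ forces $\realapprox{\phi}{\infty}(x) = f(0)$ for $\mu$-almost every $x \in [a,b]^d$ and then reads off $\cG(\phi) = 0$ from the explicit integral representations of the components of $\cG$ in \cref{eq:loss:gradient}, whereas you invoke the quantitative bound $\norm{\cG(\phi)}^2 \leq 4\rbr{\bfa^2(d+1)\norm{\phi}^2+1}\cL_\infty(\phi)$ from \cref{lem:gradient:est}, which kills $\cG(\phi)$ in one line. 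Since \cref{lem:gradient:est} is established before \cref{cor:critical:points} and requires no constancy assumption on $f$, your observation that this implication holds for arbitrary $f \in C([a,b]^d,\R)$ is also accurate (the same is in fact true of the paper's argument, though it is phrased using $f(0)$); the paper's more hands-on version has the mild advantage of not relying on the norm estimate, but both are equally rigorous.
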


\begin{cproof} {cor:critical:points}
\Nobs that \cref{cor:lyapunov:const} implies that for all $\varphi \in \R^\fd$ with $\cG ( \varphi ) = 0$ it holds that $ \cL_\infty (\varphi) = \frac{1}{8} \langle (\nabla V ) ( \varphi) , \cG(\varphi) \rangle = 0$.
Moreover, \nobs that the fact that for all $\varphi \in \R^\fd$ it holds that $\cL_\infty (\varphi) = \int_{[a,b]^d} (\realapprox{\varphi}{\infty} (x) - f(0) )^2 \, \mu ( \d x ) $ ensures that for all $\varphi \in \R^\fd$ with $\cL_\infty ( \varphi ) = 0$ we have that
\begin{equation}
    \int_{[a,b]^d} (\realapprox{\varphi}{\infty} (x) - f(0) )^2 \, \mu ( \d x ) = 0.
\end{equation}
This shows that for all $\varphi \in \cu{ \psi \in \R^\fd \colon \rbr{ \cL _ \infty ( \psi ) = 0 } }$ and $\mu$-almost all $x \in [a,b]^d$ it holds that $\realapprox{\varphi}{\infty}(x) = f(0) $.
Combining this with \cref{eq:loss:gradient} demonstrates that for all $\varphi \in \cu{ \psi \in \R^\fd \colon \rbr{ \cL _ \infty ( \psi ) = 0 } }$ we have that $\cG(\varphi) = 0$.
\end{cproof}

\subsection{Lyapunov type estimates for GD processes}
\label{subsection:lyapunov:gd}

\cfclear
\begin{lemma} \label{lem:gradient:descent:gen}
Assume \cref{setting:snn}, assume for all $x \in [a,b]^d$ that $f(x) = f(0)$, and let $\gamma \in [0, \infty)$, $\theta \in \R^\fd$. Then
\begin{equation}
    V ( \theta - \gamma \cG ( \theta ) ) - V ( \theta ) = \gamma^2 \norm{\cG ( \theta ) }^2 + \gamma^2 \abs{\cG_\fd ( \theta ) } ^2 - 8 \gamma \cL_\infty ( \theta ) \leq 2 \gamma^2 \norm{\cG ( \theta ) } ^2 - 8 \gamma \cL_\infty ( \theta ).
\end{equation}
\end{lemma}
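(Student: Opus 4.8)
The plan is to expand $V(\theta - \gamma\cG(\theta))$ directly from the definition $V(\phi) = \norm{\phi}^2 + \abs{\c{\phi} - 2f(0)}^2$ in \cref{setting:snn}, and to recognise the resulting first-order term as (half of) the Euclidean inner product $\langle(\nabla V)(\theta),\cG(\theta)\rangle$, which under the constancy hypothesis on $f$ is identified with $8\cL_\infty(\theta)$ by \cref{cor:lyapunov:const}.

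Concretely, I would first abbreviate $\psi = \theta - \gamma\cG(\theta)$ and use bilinearity of $\langle\cdot,\cdot\rangle$ to write $\norm{\psi}^2 = \norm{\theta}^2 - 2\gamma\langle\theta,\cG(\theta)\rangle + \gamma^2\norm{\cG(\theta)}^2$. Since the $\fd$-th component of $\psi$ equals $\c{\theta} - \gamma\cG_\fd(\theta)$, expanding the remaining square analogously gives $\abs{\c{\psi} - 2f(0)}^2 = \abs{\c{\theta} - 2f(0)}^2 - 2\gamma\cG_\fd(\theta)(\c{\theta} - 2f(0)) + \gamma^2\abs{\cG_\fd(\theta)}^2$. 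Adding these two identities yields
\[
  V(\psi) - V(\theta) = -2\gamma\rbr[\big]{\langle\theta,\cG(\theta)\rangle + \cG_\fd(\theta)(\c{\theta} - 2f(0))} + \gamma^2\rbr[\big]{\norm{\cG(\theta)}^2 + \abs{\cG_\fd(\theta)}^2}.
\]

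Next I would invoke \cref{prop:v:gradient} (applied with $\xi \with f(0)$), which gives $(\nabla V)(\theta) = 2\theta + (0,\dots,0,2(\c{\theta} - 2f(0)))$, so that forming the inner product with $\cG(\theta)$ shows $\langle\theta,\cG(\theta)\rangle + \cG_\fd(\theta)(\c{\theta} - 2f(0)) = \tfrac12\langle(\nabla V)(\theta),\cG(\theta)\rangle$. Because the hypothesis that $f(x) = f(0)$ for all $x \in [a,b]^d$ makes \cref{cor:lyapunov:const} applicable, this equals $\tfrac12\cdot 8\cL_\infty(\theta) = 4\cL_\infty(\theta)$. Substituting back into the display above produces the claimed identity $V(\theta - \gamma\cG(\theta)) - V(\theta) = \gamma^2\norm{\cG(\theta)}^2 + \gamma^2\abs{\cG_\fd(\theta)}^2 - 8\gamma\cL_\infty(\theta)$.

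Finally, for the asserted inequality I would just note that $\abs{\cG_\fd(\theta)}^2 \le \sum_{i=1}^\fd\abs{\cG_i(\theta)}^2 = \norm{\cG(\theta)}^2$ and $\gamma^2 \ge 0$, hence $\gamma^2\abs{\cG_\fd(\theta)}^2 \le \gamma^2\norm{\cG(\theta)}^2$, which upgrades the identity to the stated bound $2\gamma^2\norm{\cG(\theta)}^2 - 8\gamma\cL_\infty(\theta)$. I do not expect any genuine obstacle: the whole argument is a one-line expansion of a square combined with the two cited results, and the only points needing care are to match the parameter $\xi$ in \cref{prop:v:gradient} and the shift ``$2f(0)$'' in the definition of $V$ in \cref{setting:snn}, and to record explicitly that the constancy assumption on $f$ is precisely what licenses the application of \cref{cor:lyapunov:const}.
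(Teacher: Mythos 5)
Your proposal is correct. It differs from the paper's proof only in how the exact second-order remainder is produced: the paper defines $g(t) = V(\theta - t\,\cG(\theta))$, applies the fundamental theorem of calculus, identifies the first-order term via \cref{cor:lyapunov:const}, and then integrates the explicit gradient difference from \cref{prop:v:gradient:item2} in \cref{prop:v:gradient} to obtain $\gamma^2\norm{\cG(\theta)}^2 + \gamma^2\abs{\cG_\fd(\theta)}^2$; you instead exploit that $V$ is an explicit quadratic and expand $\norm{\theta-\gamma\cG(\theta)}^2$ and the shifted last-coordinate square directly, then recognise the cross term as $\tfrac12\langle(\nabla V)(\theta),\cG(\theta)\rangle = 4\cL_\infty(\theta)$ via \cref{prop:v:gradient:item1} and \cref{cor:lyapunov:const}. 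The two computations are algebraically identical in substance and lean on the same two cited results; your direct expansion is slightly more elementary and avoids the integral, while the paper's formulation makes the ``descent lemma'' structure (zeroth-order term minus $\gamma$ times a directional derivative plus a remainder) explicit in a way that would survive even if $V$ were not exactly quadratic. Your handling of the final inequality via $\abs{\cG_\fd(\theta)}^2 \le \norm{\cG(\theta)}^2$ matches the intended argument.
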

\begin{cproof} {lem:gradient:descent:gen}
Throughout this proof let $\bfe \in \R^\fd$ satisfy $\bfe = ( 0 , 0 , \ldots, 0 , 1)$ and let $g \colon \R \to \R$ satisfy for all $t \in \R$ that
\begin{equation} \label{proof:lem:descent:eq1}
    g ( t ) = V ( \theta - t \cG ( \theta ) ).
\end{equation}
\Nobs that \cref{proof:lem:descent:eq1} and the fundamental theorem of calculus prove that
\begin{equation}
    \begin{split}
        V ( \theta - \gamma \cG ( \theta ) ) 
        &= g ( \gamma ) = g ( 0 ) + \int_0^\gamma g'(t) \, \d t = g ( 0 ) + \int_0^\gamma \langle (\nabla V) ( \theta - t \cG ( \theta ) ) , ( - \cG ( \theta ) ) \rangle \, \d t \\
        &= V ( \theta ) - \int_0^\gamma \langle ( \nabla V ) ( \theta - t \cG ( \theta ) ) , \cG ( \theta ) \rangle \, \d t.
    \end{split}
\end{equation}
\cref{cor:lyapunov:const} hence demonstrates that
\begin{equation}
    \begin{split}
         V ( \theta - \gamma \cG ( \theta ) ) 
        &= V ( \theta ) - \int_0^\gamma \langle ( \nabla V ) ( \theta ) , \cG ( \theta ) \rangle \, \d t \\
        & \quad + \int_0^\gamma \langle ( \nabla V ) ( \theta ) - ( \nabla V ) ( \theta - t \cG ( \theta ) ) , \cG ( \theta ) \rangle \, \d t \\
        &= V ( \theta ) - 8 \gamma \cL_\infty ( \theta ) + \int_0^\gamma \langle ( \nabla V ) ( \theta ) - ( \nabla V ) ( \theta - t \cG ( \theta ) ) , \cG ( \theta ) \rangle \, \d t.
    \end{split}
\end{equation}
\cref{prop:v:gradient} therefore proves that
\begin{equation}
    \begin{split}
         V ( \theta - \gamma \cG ( \theta ) ) 
        &= V ( \theta ) - 8 \gamma \cL_\infty ( \theta ) + \int_0^\gamma \langle 2 t \cG ( \theta ) + 2 \c{t \cG ( \theta )} \bfe , \cG ( \theta ) \rangle \, \d t \\
        &= V ( \theta ) - 8 \gamma \cL_\infty ( \theta ) + 2 \norm{\cG ( \theta ) } ^2  \br*{ \int_0^\gamma t \, \d t }
        + 2 \br*{ \int_0^\gamma \rbr[\big]{ \c{t \cG ( \theta )} \langle \bfe , \cG ( \theta ) \rangle } \, \d t }.
    \end{split}
\end{equation}
Hence, we obtain that
\begin{equation}
    \begin{split}
          V ( \theta - \gamma \cG ( \theta ) ) 
        &= V ( \theta ) - 8 \gamma \cL_\infty ( \theta ) + \gamma^2 \norm{\cG ( \theta ) } ^2 + 2 \abs{\langle \bfe , \cG ( \theta ) \rangle }^2 \br*{ \int_0^\gamma t \, \d t } \\
        &= V ( \theta ) - 8 \gamma \cL_\infty ( \theta ) + \gamma^2 \norm{\cG ( \theta ) } ^2 + \gamma^2 \abs{\langle \bfe , \cG ( \theta ) \rangle }^2 \\
        &= V ( \theta ) - 8 \gamma \cL_\infty ( \theta ) + \gamma^2 \norm{\cG ( \theta ) } ^2 + \gamma^2 \abs{\cG_\fd ( \theta ) } ^2.
    \end{split}
\end{equation}
\end{cproof}

\begin{cor} \label{cor:gradient:descent:gen}
Assume \cref{setting:snn}, assume for all $x \in [a,b]^d$ that $f(x) = f(0)$, and let $\gamma \in [0, \infty)$, $\theta \in \R^\fd$. Then
\begin{equation}
    V ( \theta - \gamma \cG ( \theta ) ) - V ( \theta ) \leq 8 \rbr*{\gamma^2 \br*{\bfa^2 ( d+1 ) V ( \theta ) + 1 } - \gamma } \cL_\infty ( \theta ).
\end{equation}
\end{cor}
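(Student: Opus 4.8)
The plan is to chain together the three preceding results in the obvious way. First I would invoke \cref{lem:gradient:descent:gen} (which applies since $f$ is assumed constant on $[a,b]^d$) to obtain the estimate
\begin{equation}
  V(\theta - \gamma \cG(\theta)) - V(\theta) \leq 2 \gamma^2 \norm{\cG(\theta)}^2 - 8 \gamma \cL_\infty(\theta).
\end{equation}
This reduces the problem to controlling the term $\norm{\cG(\theta)}^2$ in terms of $V(\theta)$ and $\cL_\infty(\theta)$.

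Next I would apply \cref{lem:gradient:est} with $\phi \with \theta$ to get $\norm{\cG(\theta)}^2 \leq 4(\bfa^2 (d+1) \norm{\theta}^2 + 1) \cL_\infty(\theta)$, so that $2\gamma^2 \norm{\cG(\theta)}^2 \leq 8\gamma^2(\bfa^2(d+1)\norm{\theta}^2 + 1)\cL_\infty(\theta)$. Then I would use the lower bound $\norm{\theta}^2 \leq V(\theta)$ from \cref{prop:lyapunov:norm} (applied with $\xi \with f(0)$, noting $V$ in \cref{setting:snn} has exactly the form $\norm{\phi}^2 + \abs{\c\phi - 2f(0)}^2$) together with the fact that $\cL_\infty(\theta) \geq 0$ to replace $\norm{\theta}^2$ by $V(\theta)$, yielding
\begin{equation}
  2\gamma^2 \norm{\cG(\theta)}^2 \leq 8\gamma^2\bigl(\bfa^2(d+1)V(\theta)+1\bigr)\cL_\infty(\theta).
\end{equation}

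Finally I would substitute this into the estimate from the first step and factor out $8\cL_\infty(\theta)$ to obtain
\begin{equation}
  V(\theta - \gamma\cG(\theta)) - V(\theta) \leq 8\gamma^2\bigl(\bfa^2(d+1)V(\theta)+1\bigr)\cL_\infty(\theta) - 8\gamma\cL_\infty(\theta) = 8\bigl(\gamma^2(\bfa^2(d+1)V(\theta)+1)-\gamma\bigr)\cL_\infty(\theta),
\end{equation}
which is the claim. There is essentially no obstacle here; this corollary is a routine bookkeeping combination of \cref{lem:gradient:descent:gen}, \cref{lem:gradient:est}, and \cref{prop:lyapunov:norm}, and the only thing to be careful about is that the nonnegativity of $\cL_\infty(\theta)$ is what makes the monotone replacement $\norm{\theta}^2 \leq V(\theta)$ preserve the inequality direction.
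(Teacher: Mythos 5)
Your proposal is correct and follows essentially the same route as the paper, which likewise combines \cref{lem:gradient:est} with the bound $\norm{\theta}^2 \leq V(\theta)$ from \cref{prop:lyapunov:norm} to estimate $\norm{\cG(\theta)}^2$ and then inserts this into \cref{lem:gradient:descent:gen}. Your remark that the nonnegativity of $\cL_\infty(\theta)$ is what justifies the monotone replacement is a correct observation that the paper leaves implicit.
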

\begin{cproof}{cor:gradient:descent:gen}
\Nobs that \cref{lem:gradient:est} and \cref{prop:lyapunov:norm} demonstrate that
\begin{equation}
    \norm{\cG ( \theta ) } ^2 \leq 4 \br*{\bfa^2 ( d+1 )\norm{\theta }^2 + 1 } \cL_\infty ( \theta ) \leq 4 \br*{\bfa^2 ( d+1 ) V ( \theta ) + 1 } \cL_\infty ( \theta ) .
\end{equation}
\cref{lem:gradient:descent:gen} therefore shows that
\begin{equation}
    \begin{split}
         V ( \theta - \gamma \cG ( \theta ) ) - V ( \theta )
         & \leq  8 \gamma^2 \br*{\bfa^2 ( d+1 ) V ( \theta ) + 1 } \cL_\infty ( \theta ) - 8 \gamma \cL_\infty (\theta ) \\
         &= 8 \rbr*{\gamma^2 \br*{\bfa^2 ( d+1 ) V ( \theta ) + 1 } - \gamma } \cL_\infty ( \theta ).
    \end{split}
\end{equation}
\end{cproof}

\cfclear
\begin{cor} \label{cor:est:vtheta_n}
Assume \cref{setting:snn}, assume for all $x \in [a,b]^d$ that $f(x) = f(0)$, let $( \gamma_n )_{n \in \N_0} \subseteq [ 0, \infty)$, let $(\Theta_n)_{n \in \N_0} \colon \N_0 \to \R^{\fd }$ satisfy for all $n \in \N_0$ that $\Theta_{n+1} = \Theta_n - \gamma_n \cG ( \Theta_n)$, and let $n \in \N_0$. Then 
\begin{equation} \label{cor:est:vtheta_n:eq1}
      V(\Theta_{n+1}) -  V ( \Theta_n) \leq 8 \rbr*{ (\gamma_n) ^2 \br{ \bfa^2 (d+1) V(\Theta_n) + 1}  -  \gamma_n } \cL_\infty ( \Theta _ n) .
\end{equation}
\end{cor}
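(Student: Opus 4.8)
The plan is to obtain \cref{cor:est:vtheta_n:eq1} as an immediate instance of \cref{cor:gradient:descent:gen}. Indeed, by hypothesis \cref{setting:snn} is in force and the target function satisfies $f(x) = f(0)$ for all $x \in [a,b]^d$, so \cref{cor:gradient:descent:gen} is applicable with the choices $\gamma \with \gamma_n \in [0,\infty)$ and $\theta \with \Theta_n \in \R^\fd$.

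First I would record that the defining recursion $\Theta_{n+1} = \Theta_n - \gamma_n \cG(\Theta_n)$ yields $V(\Theta_{n+1}) = V(\Theta_n - \gamma_n \cG(\Theta_n))$, so that the left-hand side of \cref{cor:est:vtheta_n:eq1} equals $V(\Theta_n - \gamma_n \cG(\Theta_n)) - V(\Theta_n)$. Then \cref{cor:gradient:descent:gen} (with the above substitutions) bounds this quantity by $8 \rbr{ (\gamma_n)^2 [ \bfa^2(d+1) V(\Theta_n) + 1 ] - \gamma_n } \cL_\infty(\Theta_n)$, which is exactly the claimed estimate.

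I expect no genuine obstacle here: the statement is purely a specialization of \cref{cor:gradient:descent:gen} to the concrete GD sequence, and the only thing to be careful about is matching notation (that $\Theta_{n+1}$ is literally the point $\theta - \gamma \cG(\theta)$ with $\theta = \Theta_n$, $\gamma = \gamma_n$). Thus the proof will consist of a single sentence invoking \cref{cor:gradient:descent:gen}.

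\begin{proof}[Proof of \cref{cor:est:vtheta_n}]
\Nobs that the assumption that for all $x \in [a,b]^d$ it holds that $f(x) = f(0)$, the assumption that for all $n \in \N_0$ it holds that $\Theta_{n+1} = \Theta_n - \gamma_n \cG(\Theta_n)$, and \cref{cor:gradient:descent:gen} (applied with $\gamma \with \gamma_n$, $\theta \with \Theta_n$) establish \cref{cor:est:vtheta_n:eq1}.
\end{proof}
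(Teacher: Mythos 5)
Your proposal is correct and coincides with the paper's own proof, which likewise obtains \cref{cor:est:vtheta_n:eq1} as an immediate specialization of \cref{cor:gradient:descent:gen} with $\gamma \with \gamma_n$ and $\theta \with \Theta_n$. The additional remark about matching $\Theta_{n+1}$ with $\theta - \gamma \cG(\theta)$ is exactly the right (and only) point to verify.
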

\begin{cproof} {cor:est:vtheta_n}
\Nobs that \cref{cor:gradient:descent:gen} establishes \cref{cor:est:vtheta_n:eq1}.
\end{cproof}

\cfclear
\begin{lemma} \label{lem:vthetan:decreasing} 
Assume \cref{setting:snn}, let $( \gamma_n )_{n \in \N_0} \subseteq [ 0, \infty)$, let $(\Theta_n)_{n \in \N_0} \colon \N_0 \to \R^{\fd}$ satisfy for all $n \in \N_0$ that $\Theta_{n+1} = \Theta_n - \gamma_n \cG ( \Theta_n)$, assume for all $x \in [a,b]^d$ that $f(x) = f(0)$, and assume $\sup_{n \in \N_0} \gamma_n \leq \br{\bfa^2(d+1)V(\Theta_0) + 1}^{-1} $. Then it holds  for all $n \in \N_0$ that 
\begin{equation} \label{lem:vthetan:decreasing:eq1} 
V (\Theta_{n+1}) - V ( \Theta_n) \leq - 8 \gamma_n \rbr*{ 1- \br{ \sup \nolimits_{m \in \N_0} \gamma_m } \br{\bfa^2 (d+1) V(\Theta_0) + 1} } \cL_\infty (\Theta_n) \leq 0.
\end{equation}
\end{lemma}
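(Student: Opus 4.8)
The plan is to argue by induction on $n \in \N_0$, proving simultaneously that $V(\Theta_n) \leq V(\Theta_0)$ and that the estimate \cref{lem:vthetan:decreasing:eq1} holds at index $n$. The engine of the argument is \cref{cor:est:vtheta_n} (more precisely \cref{cor:est:vtheta_n:eq1}), which in the constant-target situation under consideration gives for every $n \in \N_0$ that
\begin{equation}
  V(\Theta_{n+1}) - V(\Theta_n) \leq 8 \rbr*{ (\gamma_n)^2 \br{ \bfa^2 (d+1) V(\Theta_n) + 1 } - \gamma_n } \cL_\infty(\Theta_n) .
\end{equation}
I will also use the trivial facts that $\gamma_n \geq 0$ for all $n$ and that $\cL_\infty(\Theta_n) \geq 0$ (being the $\mu$-integral of a square).

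The base case $n = 0$ of the norm bound holds with equality. For the induction step, assume $V(\Theta_n) \leq V(\Theta_0)$. Since $(\gamma_n)^2 \geq 0$ and $\cL_\infty(\Theta_n) \geq 0$, replacing $V(\Theta_n)$ by the larger quantity $V(\Theta_0)$ inside the bracket above only enlarges the right-hand side, and then using $0 \leq \gamma_n \leq \sup_{m \in \N_0} \gamma_m$ together with the hypothesis $\sup_{m \in \N_0} \gamma_m \leq (\bfa^2(d+1) V(\Theta_0)+1)^{-1}$ I would obtain
\begin{equation}
\begin{split}
  V(\Theta_{n+1}) - V(\Theta_n)
  &\leq 8 \rbr*{ (\gamma_n)^2 \br{ \bfa^2(d+1) V(\Theta_0) + 1 } - \gamma_n } \cL_\infty(\Theta_n) \\
  &\leq 8 \gamma_n \rbr*{ \br{ \sup\nolimits_{m \in \N_0} \gamma_m } \br{ \bfa^2(d+1) V(\Theta_0) + 1 } - 1 } \cL_\infty(\Theta_n) \\
  &= - 8 \gamma_n \rbr*{ 1 - \br{ \sup\nolimits_{m \in \N_0} \gamma_m } \br{ \bfa^2(d+1) V(\Theta_0) + 1 } } \cL_\infty(\Theta_n) \leq 0 ,
\end{split}
\end{equation}
where the final inequality uses that the hypothesis on $\sup_{m \in \N_0} \gamma_m$ forces $1 - (\sup_{m \in \N_0} \gamma_m)(\bfa^2(d+1)V(\Theta_0)+1) \geq 0$. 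This is exactly \cref{lem:vthetan:decreasing:eq1} at index $n$, and in particular it yields $V(\Theta_{n+1}) \leq V(\Theta_n) \leq V(\Theta_0)$, which closes the induction.

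Since the induction delivers $V(\Theta_n) \leq V(\Theta_0)$ for every $n \in \N_0$, the displayed chain of inequalities is in fact valid for every $n \in \N_0$, which is the assertion. I do not foresee a genuine obstacle: the one point demanding care is the apparent circularity---the decrease estimate at step $n$ relies on $V(\Theta_n) \leq V(\Theta_0)$, while that norm bound is itself a consequence of the decrease estimates at the earlier steps---and this is precisely what running the induction on the two statements jointly resolves.
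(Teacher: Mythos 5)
Your proof is correct and follows essentially the same route as the paper: an induction driven by \cref{cor:est:vtheta_n}, using $(\gamma_n)^2 \leq \gamma_n \sup_{m \in \N_0}\gamma_m$ and the monotonicity $V(\Theta_n) \leq V(\Theta_0)$ to absorb the quadratic term. The only cosmetic difference is that the paper's induction hypothesis is the displayed estimate at all earlier indices (from which $V(\Theta_n)\leq V(\Theta_0)$ is then read off), whereas you carry the bound $V(\Theta_n)\leq V(\Theta_0)$ explicitly as part of the induction; these are logically equivalent.
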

\begin{cproof} {lem:vthetan:decreasing}
Throughout this proof let $\supgn  \in \R$ satisfy $\supgn  = \sup_{n \in \N_0} \gamma_n$.
We now prove \cref{lem:vthetan:decreasing:eq1} by induction on $n \in \N_0$. \Nobs that \cref{cor:est:vtheta_n} and the fact that $\gamma_0 \leq \supgn$ imply that
\begin{equation}
\begin{split}
    V(\Theta_1) - V(\Theta_0) 
    &\leq   \rbr*{  - 8 \gamma_0 + 8 ( \gamma_0 )^2 \br{ \bfa^2 (d+1) V(\Theta_0) + 1 }  } \cL_\infty ( \Theta _ 0) \\
    &\leq \rbr*{  - 8 \gamma_0 + 8 \gamma_0 \supgn \br{ \bfa^2 (d+1) V(\Theta_0) + 1 } } \cL_\infty ( \Theta _ 0) \\
    &= - 8 \gamma_0 ( 1-\supgn  \br{\bfa^2 (d+1)V(\Theta_0) + 1} ) \cL_\infty (\Theta_n) \leq 0.
    \end{split}
\end{equation}
This establishes \cref{lem:vthetan:decreasing:eq1} in the base case $n=0$. For the induction step let $n \in \N$ satisfy for all $m \in \{0, 1, \ldots, n-1\}$ that 
\begin{equation} \label{eq:induction:1}
V( \Theta_{m + 1}) - V ( \Theta_{m} ) \leq - 8 \gamma_m ( 1-\supgn  \br{\bfa^2 (d+1) V(\Theta_0) + 1} ) \cL_\infty (\Theta_m) \leq 0.
\end{equation}
\Nobs that \cref{eq:induction:1} shows that $V(\Theta_n) \leq V(\Theta_{n-1}) \leq \cdots  \leq V(\Theta_0)$. The fact that $\gamma_n \leq \supgn $ and \cref{cor:est:vtheta_n} hence demonstrate that
\begin{equation}
    \begin{split}
    V(\Theta_{n+1}) - V(\Theta_n) &\leq \rbr*{  - 8 \gamma_n + 8 ( \gamma_n ) ^2 \br{ \bfa^2 (d+1) V(\Theta_n) + 1 }  }  \cL_\infty ( \Theta _ n) \\
    &\leq  \rbr*{  - 8 \gamma_n + 8 \gamma_n \supgn  \br{\bfa^2 (d+1) V(\Theta_0) + 1}  } \cL_\infty ( \Theta _ n) \\ 
    &= - 8 \gamma_n ( 1-\supgn  \br{\bfa^2 (d+1) V(\Theta_0) + 1} ) \cL_\infty (\Theta_n) \leq 0.
    \end{split}
\end{equation}
Induction therefore establishes \cref{lem:vthetan:decreasing:eq1}.
\end{cproof}

\subsection{Convergence analysis for GD processes in the training of ANNs}
\label{subsection:gd:convergence}
\cfclear
\begin{theorem} \label{theo:gd:loss} 
Assume \cref{setting:snn}, assume for all $x \in [a,b]^d$ that $ f(x) = f(0)$, let $( \gamma_n )_{n \in \N_0} \subseteq [ 0, \infty)$, let $(\Theta_n)_{n \in \N_0} \colon \N_0 \to \R^{ \fd }$ satisfy for all $n \in \N_0$ that $\Theta_{n+1} = \Theta_n - \gamma_n \cG ( \Theta_n)$, and assume $\sup_{n \in \N_0} \gamma_n < \br{\bfa^2(d+1)V(\Theta_0) + 1}^{-1} $ and $\sum_{n=0}^\infty \gamma_n = \infty$. Then 
\begin{enumerate} [label=(\roman*)] 
    \item \label{theo:gd:item1} it holds that $\sup_{n \in \N_0} \norm{ \Theta_n } \leq \br{V(\Theta_0)}^{1/2} < \infty$ and
    \item \label{theo:gd:item2} it holds that $\limsup_{n \to \infty} \cL_\infty (\Theta_n) = 0$.
\end{enumerate}
\end{theorem}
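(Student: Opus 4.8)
The plan is to run a Lyapunov/dissipation argument to bound the iterates and to extract summability of $(\gamma_n\cL_\infty(\Theta_n))_{n\in\N_0}$, and then to upgrade the resulting $\liminf$ statement to a $\limsup$ statement by a contradiction argument in the spirit of Lei et al. First I would observe that the hypothesis $\sup_{n\in\N_0}\gamma_n<(\bfa^2(d+1)V(\Theta_0)+1)^{-1}$ in particular makes $\sup_{n\in\N_0}\gamma_n$ admissible for \cref{lem:vthetan:decreasing}, so that, writing $c:=8(1-(\sup_{m\in\N_0}\gamma_m)(\bfa^2(d+1)V(\Theta_0)+1))$, which is strictly positive precisely because of the strict inequality in the hypothesis, \cref{lem:vthetan:decreasing} yields $V(\Theta_{n+1})-V(\Theta_n)\leq -c\,\gamma_n\cL_\infty(\Theta_n)\leq 0$ for all $n\in\N_0$. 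Hence $(V(\Theta_n))_{n\in\N_0}$ is non-increasing, so $\norm{\Theta_n}^2\leq V(\Theta_n)\leq V(\Theta_0)$ by \cref{prop:lyapunov:norm}, which establishes \cref{theo:gd:item1}. Telescoping the dissipation inequality and using $V\geq 0$ yields $\sum_{n=0}^\infty\gamma_n\cL_\infty(\Theta_n)\leq c^{-1}V(\Theta_0)<\infty$, and together with the assumption $\sum_{n=0}^\infty\gamma_n=\infty$ this forces $\liminf_{n\to\infty}\cL_\infty(\Theta_n)=0$.

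The main work, and the step I expect to be the real obstacle, is promoting this $\liminf$ to $\limsup_{n\to\infty}\cL_\infty(\Theta_n)=0$, since the learning rates are bounded only by a data-dependent constant and need not tend to zero, so a single gradient step can move things by a non-negligible amount. By \cref{theo:gd:item1} every iterate lies in the compact ball $K:=\{\phi\in\R^\fd:\norm{\phi}\leq(V(\Theta_0))^{1/2}\}$, so \cref{lem:realization:lip} supplies a finite Lipschitz constant $\scrL$ for $\cL_\infty$ on $K$ and $\cL_\infty$ is bounded on $K$; combining this with $\Theta_{n+1}-\Theta_n=-\gamma_n\cG(\Theta_n)$ and the bound $\norm{\cG(\phi)}^2\leq 4(\bfa^2(d+1)V(\Theta_0)+1)\cL_\infty(\phi)$ valid on $K$ by \cref{lem:gradient:est} I would derive the one-step oscillation estimate $\abs{\cL_\infty(\Theta_{n+1})-\cL_\infty(\Theta_n)}\leq\beta\,\gamma_n(\cL_\infty(\Theta_n))^{1/2}$ for a finite constant $\beta$. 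The crucial feature here is the factor $(\cL_\infty(\Theta_n))^{1/2}$: it guarantees that the loss cannot jump away from a small value in a single step, and this is exactly what neutralizes the difficulty caused by possibly large learning rates.

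Finally I would argue by contradiction: assume $\limsup_{n\to\infty}\cL_\infty(\Theta_n)=2\epsilon>0$. Using the oscillation estimate, pick $\delta\in(0,\epsilon)$ so small that $\cL_\infty(\Theta_n)<\delta$ implies $\cL_\infty(\Theta_{n+1})<\epsilon$. The sets $\{n:\cL_\infty(\Theta_n)<\delta\}$ and $\{n:\cL_\infty(\Theta_n)\geq 2\epsilon\}$ are both infinite, by $\liminf=0$ and $\limsup=2\epsilon$ respectively. For each large $n$ in the second set let $m(n)<n$ be the largest index before $n$ lying in the first set; then $\cL_\infty(\Theta_k)\geq\delta$ for all $k\in\{m(n)+1,\ldots,n\}$, and $\cL_\infty(\Theta_{m(n)+1})<\epsilon$ by the choice of $\delta$, so in particular $n\geq m(n)+2$. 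Telescoping the oscillation estimate over $\{m(n)+1,\ldots,n-1\}$ and using the uniform bound on $\cL_\infty$ on $K$ forces $\sum_{k=m(n)+1}^{n-1}\gamma_k\geq\rho$ for some fixed $\rho>0$ independent of $n$, since the loss must climb from below $\epsilon$ to at least $2\epsilon$ across this interval; hence $\sum_{k=m(n)+1}^{n}\gamma_k\cL_\infty(\Theta_k)\geq\delta\rho>0$. Extracting a subsequence of such indices $n$ whose intervals $(m(n),n]$ are pairwise disjoint --- which is possible because between any two of them there lies an index where $\cL_\infty<\delta$, and this pushes the start $m(n)$ of the later interval past the end of the earlier one --- one obtains $\sum_{n=0}^\infty\gamma_n\cL_\infty(\Theta_n)=\infty$, contradicting the summability shown above. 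Therefore $\limsup_{n\to\infty}\cL_\infty(\Theta_n)=0$, which together with $\cL_\infty\geq 0$ is \cref{theo:gd:item2}.
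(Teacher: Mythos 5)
Your proof of \cref{theo:gd:item1}, the telescoping bound $\sum_{n=0}^\infty\gamma_n\cL_\infty(\Theta_n)<\infty$, and the deduction $\liminf_{n\to\infty}\cL_\infty(\Theta_n)=0$ coincide with the paper's argument (via \cref{lem:vthetan:decreasing} and \cref{prop:lyapunov:norm}). For the upgrade to $\limsup_{n\to\infty}\cL_\infty(\Theta_n)=0$ you take a genuinely different route. The paper selects \emph{downcrossing} windows $[m_k,n_k)$ on which $\cL_\infty(\Theta_j)\geq\varepsilon$, with $\cL_\infty(\Theta_{m_k})>2\varepsilon$ and $\cL_\infty(\Theta_{n_k})<\varepsilon$; summability of $\sum_j\gamma_j\cL_\infty(\Theta_j)$ forces $\sum_k\sum_{j=m_k}^{n_k-1}\gamma_j<\infty$, and then the \emph{uniform} gradient bound of \cref{cor:g:bounded} together with \cref{lem:realization:lip} makes the total displacement $\sum_k\norm{\Theta_{n_k}-\Theta_{m_k}}$ finite, so $\cL_\infty(\Theta_{n_k})-\cL_\infty(\Theta_{m_k})\to0$, contradicting the persistent gap $\varepsilon$. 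You instead track \emph{upcrossing} excursions from the set $\{\cL_\infty<\delta\}$ up to a high level and show that each such excursion costs at least a fixed amount $\delta\rho>0$ in the series $\sum_n\gamma_n\cL_\infty(\Theta_n)$, so that infinitely many pairwise disjoint excursions contradict its summability. Your route requires the refined one-step estimate $\abs{\cL_\infty(\Theta_{n+1})-\cL_\infty(\Theta_n)}\leq\beta\gamma_n(\cL_\infty(\Theta_n))^{1/2}$, which you correctly assemble from \cref{lem:gradient:est}, \cref{theo:gd:item1}, and \cref{lem:realization:lip}; the vanishing factor $(\cL_\infty(\Theta_n))^{1/2}$ is indeed essential for your step ``small loss cannot jump to $\epsilon$ in one iteration,'' since the learning rates need not tend to zero. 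The paper's route gets by with the cruder uniform bound of \cref{cor:g:bounded}. Both arguments are correct and of comparable length; yours localizes the contradiction to a single excursion, while the paper's aggregates over all downcrossing windows.

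One small repair is needed: from $\limsup_{n\to\infty}\cL_\infty(\Theta_n)=2\epsilon$ you may not conclude that $\{n\in\N_0\colon\cL_\infty(\Theta_n)\geq2\epsilon\}$ is infinite (consider a sequence increasing strictly to its limit superior). Replace the high threshold by, say, $\tfrac{3}{2}\epsilon$, so that $\{n\in\N_0\colon\cL_\infty(\Theta_n)>\tfrac{3}{2}\epsilon\}$ is guaranteed to be infinite; the climb across each excursion is then at least $\tfrac{1}{2}\epsilon$ instead of $\epsilon$, and the remainder of your argument goes through unchanged.
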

\begin{cproof}{theo:gd:loss}
Throughout this proof let $\eta \in (0, \infty)$ satisfy $\eta = 8( 1- \br{ \sup_{n \in \N_0} \gamma_n } \br{\bfa^2 (d+1) V(\Theta_0) + 1} )$ and let $\varepsilon \in \R$ satisfy $\varepsilon = ( \nicefrac{1}{3} ) [ \min \{ 1 , \limsup_{n \to \infty} \cL_\infty ( \Theta_n ) \} ] $.
\Nobs that \cref{lem:vthetan:decreasing} implies that for all $n \in \N_0$ we have that $V(\Theta_n ) \leq V ( \Theta_{n-1}) \leq \cdots \leq V ( \Theta_0 ) $. 
Combining this and the fact that for all $n \in \N_0$ it holds that $\norm{ \Theta_n } \leq  \br{V ( \Theta_n )}^{1/2}$ establishes \cref{theo:gd:item1}.
Next \nobs that \cref{lem:vthetan:decreasing} implies for all $N \in \N$ that
\begin{equation}
    \eta  \br*{ \sum_{n=0}^{N-1} \gamma_n \cL_\infty (\Theta_n) } \leq \sum_{n=0}^{N-1} \rbr[\big]{ V(\Theta_{n}) - V(\Theta_{n+1}) } = V(\Theta_0) - V( \Theta_N) \leq V(\Theta_0).
\end{equation}
Hence, we have that
\begin{equation} \label{eq:sum:gamman:finite}
    \sum_{n=0}^\infty \br*{ \gamma_n \cL_\infty (\Theta_n) } \leq \frac{V ( \Theta_0 )}{\eta} < \infty.
\end{equation}
This and the assumption that $\sum_{n=0}^\infty \gamma_n = \infty$ ensure that $\liminf_{n \to \infty} \cL_\infty ( \Theta_n ) = 0$.
We intend to complete the proof of \cref{theo:gd:item2} by a contradiction. In the following we thus assume that 
\begin{equation} \label{eq:assumption:limsup>0}
\limsup_{n \to \infty} \cL_\infty ( \Theta_n ) > 0.
\end{equation}
\Nobs that \cref{eq:assumption:limsup>0} implies that 
\begin{equation}
    0 = \liminf_{n \to \infty} \cL_\infty ( \Theta_n ) < \varepsilon < 2 \varepsilon < \limsup_{n \to \infty} \cL_\infty ( \Theta_n ).
\end{equation}
This shows that there exist $(m_k, n_k) \in \N^2$, $k \in \N$, which satisfy for all $k \in \N$ that $m_k < n_k < m_{k+1}$, $ \cL_\infty ( \Theta_{m_k}) > 2 \varepsilon$, and $ \cL_\infty ( \Theta_{n_k}) <  \varepsilon \leq \min_{j \in \N \cap [m_k, n_k ) } \cL_\infty ( \Theta_j )$.
\Nobs that \cref{eq:sum:gamman:finite} and the fact that for all $k \in \N$, $j \in \N \cap [m_k, n_k )$ it holds that $1 \leq \frac{1}{\varepsilon} \cL_\infty ( \Theta_j )$ assure that
\begin{equation} \label{theo:gd:proof1}
    \sum_{k=1}^\infty \sum_{j=m_k}^{n_k - 1} \gamma_j 
    \leq \frac{1}{\varepsilon} \br*{ \sum_{k=1}^\infty \sum_{j=m_k}^{n_k - 1} \rbr*{ \gamma_j \cL_\infty ( \Theta_j ) } }
    \leq \frac{1}{\varepsilon } \br*{ \sum_{j=0}^\infty \rbr*{ \gamma_j \cL_\infty ( \Theta_j ) } }
    < \infty.
\end{equation}
Next \nobs that \cref{cor:g:bounded,theo:gd:item1} ensure that there exists $\fC \in \R$ which satisfies that 
\begin{equation} \label{theo:gd:gradient:bounded:eq}
    \sup\nolimits_{n \in \N_0} \norm{ \cG ( \Theta_n ) } \leq \fC. 
\end{equation}
\Nobs that the triangle inequality, \cref{theo:gd:proof1}, and \cref{theo:gd:gradient:bounded:eq} prove that
\begin{equation} \label{theo:gd:proof2}
    \sum_{k=1}^\infty \norm{ \Theta_{n_k} - \Theta_{m_k} } 
    \leq\sum_{k=1}^\infty \sum_{j=m_k}^{n_k - 1} \norm{ \Theta_{j+1} - \Theta _j }
    = \sum_{k=1}^\infty \sum_{j=m_k}^{n_k - 1} ( \gamma_j \norm{ \cG ( \Theta_j ) } )
    \leq \fC \br*{ \sum_{k=1}^\infty \sum_{j=m_k}^{n_k - 1} \gamma_j }  < \infty.
\end{equation}
Moreover, \nobs that \cref{lem:realization:lip,theo:gd:item1} demonstrate that there exists $\scrL \in \R $ which satisfies for all $m, n \in \N_0$ that $ \abs{ \cL_\infty ( \Theta_m ) - \cL_\infty ( \Theta_n ) } \leq \scrL \norm{ \Theta_m - \Theta_n }$. This and \cref{theo:gd:proof2} show that
\begin{equation}
    \limsup_{k \to \infty} \abs{ \cL_\infty ( \Theta_{n_k} ) - \cL_\infty ( \Theta_{m_k} ) } \leq \limsup_{k \to \infty}  \rbr[\big]{ \scrL  \norm{ \Theta_{n_k} - \Theta_{m_k} } } = 0.
\end{equation}
Combining this and the fact that for all $k \in \N_0$ it holds that $\cL_\infty ( \Theta_{n_k} ) < \varepsilon < 2 \varepsilon < \cL_\infty ( \Theta_{m_k})$ ensures that
\begin{equation}
    0 < \varepsilon \leq \inf_{k \in \N} \abs{ \cL_\infty ( \Theta_{n_k} ) - \cL_\infty ( \Theta_{m_k} ) } \leq \limsup_{k \to \infty} \abs{ \cL_\infty ( \Theta_{n_k} ) - \cL_\infty ( \Theta_{m_k} ) } = 0.
\end{equation}
This contradiction establishes \cref{theo:gd:item2}.
\end{cproof}

\section{Convergence of stochastic gradient descent (SGD) processes}

In this section we establish in \cref{theorem:sgd} in \cref{subsection:sgd:convergence} below that the true risks of SGD processes 
converge in the training of ANNs with ReLU activation to zero if the target function under consideration 
is a constant. 
In this section we thereby transfer the convergence analysis for GD processes 
from \cref{section:gd} above to a convergence analysis for SGD processes.

\cref{theorem:sgd} in \cref{subsection:sgd:convergence} postulates the mathematical setup in \cref{setting:sgd} in \cref{subsection:setting:gd} below. 
In \cref{setting:sgd} we formally introduce, among other things, the constant $ \xi \in \R $ with which the target function coincides, 
the realization functions $\realapprox{\phi}{\infty} \colon \R^d \to \R $, $ \phi \in \R^\fd $, of the considered ANNs 
(see \cref{setting:sgd:eq:realization} in \cref{setting:sgd}), 
the true risk function $ \mathcal{L} \colon \R^\fd \to \R $, 
the sizes $ M_n \in \N $, $ n \in \N_0 $, of the employed 
mini-batches in the SGD optimization method, 
the empirical risk functions $ \mathfrak{L}^n_{ \infty } \colon \R^\fd \times \Omega \to \R $, $ n \in \N _0$, 
a sequence of smooth approximations 
$ \Rect_r \colon \R \to \R$, $ r \in \N $, 
of the ReLU activation function 
(see \cref{setting:sgd:eq:relu} in \cref{setting:sgd}), 
the learning rates $ \gamma_n \in [0, \infty) $, $ n \in \N_0 $, 
used in the SGD optimization method, 
the appropriately generalized gradient functions  
$ \mathfrak{G}^n = ( \fG_1^n, \ldots, \fG_\fd^n) \colon \R^\fd \times \Omega \to \R^\fd $, 
$ n \in \N_0 $, 
associated to the empirical risk functions, 
as well as
the SGD process 
$\Theta = ( \Theta_n )_{ n \in \N_0 } \colon \N_0 \times \Omega \to \R^\fd$.

\Cref{theo:sgd:item2,theo:sgd:item3} in \cref{theorem:sgd} in \cref{subsection:sgd:convergence} below prove that the true risk 
$ \mathcal{L}( \Theta_n ) $ of the 
SGD process $ \Theta \colon \N_0 \times \Omega \to \R^\fd $ 
converges in the almost sure and $ L^1 $-sense to zero 
as the number of stochastic gradient descent steps 
$ n \in \N $ increases to infinity. 
In our proof of \cref{theorem:sgd} we employ 
the elementary local Lipschitz continuity estimate 
for the true risk function in \cref{lem:realization:lip} in \cref{subsection:loss:lipschitz} above, 
the upper estimates for the standard norm of the generalized gradient functions 
$ \mathfrak{G}^n \colon \R^\fd \times \Omega \to \R^\fd $, $ n \in \N_0 $, 
in \cref{lem:empgradient:est} and \cref{lem:empgradient:bounded} in \cref{subsection:empirical:gradient:est} below, 
the elementary representation results for expectations of empirical risks 
of SGD processes in \cref{cor:expected:loss} in \cref{subsection:expectation:risk} below, 
as well as the Lyapunov type estimates for SGD processes 
in \cref{lem:emp:lyapunov}, \cref{lem:sgd:gen}, \cref{lem:lyapunov:sgd}, and \cref{cor:lyapunov:sgd} in \cref{subsection:lyapunov:sgd} below.

Our proof of \cref{lem:empgradient:bounded} uses \cref{lem:realization:lip} and \cref{lem:empgradient:est}. 
Our proof of \cref{lem:empgradient:est}, in turn, uses the elementary representation result 
for the generalized gradient functions 
$ \mathfrak{G}^n \colon \R^\fd \times \Omega \to \R^\fd $, $ n \in \N_0 $, 
in \cref{emp:loss:differentiable} in \cref{subsection:empirical:gradient} below. 
Our proof of \cref{cor:expected:loss} employs 
the elementary representation result for expectations of the empirical risk functions 
in \cref{prop:unbiased} in \cref{subsection:expectation:risk}
and the elementary measurability result in \cref{lem:sgd:measurable} in \cref{subsection:expectation:risk}.

Very roughly speaking, 
\cref{emp:loss:differentiable} in \cref{subsection:empirical:gradient} below transfers \cref{prop:limit:lr} in \cref{subsection:approx:gradient} above to the SGD setting, 
\cref{lem:empgradient:est} in \cref{subsection:empirical:gradient:est} below transfers \cref{lem:gradient:est} in \cref{subsection:gradient:est} above to the SGD setting, 
\cref{lem:empgradient:bounded} in \cref{subsection:empirical:gradient:est} below transfers \cref{cor:g:bounded} in \cref{subsection:gradient:est} above to the SGD setting, 
\cref{lem:emp:lyapunov} in \cref{subsection:lyapunov:sgd} below transfers \cref{cor:lyapunov:const} in \cref{subsection:lyapunov:elementary} above 
to the SGD setting, 
\cref{lem:sgd:gen} in \cref{subsection:lyapunov:sgd} below transfers \cref{lem:gradient:descent:gen} 
in \cref{subsection:lyapunov:gd} above to the SGD setting, 
\cref{lem:lyapunov:sgd} in \cref{subsection:lyapunov:sgd} below transfers 
\cref{cor:est:vtheta_n} in \cref{subsection:lyapunov:gd} above to the SGD setting, 
\cref{cor:lyapunov:sgd} in \cref{subsection:lyapunov:sgd} below transfers \cref{lem:vthetan:decreasing} in \cref{subsection:lyapunov:gd} above to the SGD setting, 
and \cref{theorem:sgd} in \cref{subsection:sgd:convergence} below transfers \cref{theo:gd:loss} in \cref{subsection:gd:convergence} above to the SGD setting. 

\subsection{Description of the SGD optimization method in the training of ANNs}
\label{subsection:sgd:setting}

\begin{setting} \label{setting:sgd}
Let $d, \width, \fd \in \N$, $\xi  , \bfa , a \in \R$, $b \in (a, \infty)$ satisfy $\fd = d\width + 2 \width + 1$ and $\bfa = \max \{ \abs{a}, \abs{b}, 1 \}$,
let $\Rect _r \colon \R \to \R$, $r \in \N \cup \{ \infty\}$, satisfy for all $x \in \R$ that $ \rbr*{ \bigcup_{r \in \N } \{ \Rect _r \}  } \subseteq C^1 ( \R , \R)$, $\Rect _\infty ( x ) = \max \{ x , 0 \}$, and 
\begin{equation} \label{setting:sgd:eq:relu}
\limsup\nolimits_{r \to \infty}  \rbr*{ \abs { \Rect _r ( x ) - \Rect _\infty ( x ) } + \abs { (\Rect _r)' ( x ) - \indicator{(0, \infty)} ( x ) } } = 0,
\end{equation}
let $\fw  = (( \w{\phi} _ {i,j}  )_{(i,j) \in \{1, \ldots, \width \} \times \{1, \ldots, d \} })_{ \phi \in \R^{\fd}} \colon \R^{\fd} \to \R^{ \width \times d}$,
$\fb =  (( \b{\phi} _ 1 , \ldots, \b{\phi} _ \width ))_{ \phi \in \R^{\fd}} \colon \R^{\fd} \to \R^{\width}$,
$\fv = (( \v{\phi} _ 1 , \ldots, \v{\phi} _ \width ))_{ \phi \in \R^{\fd}} \colon \R^{\fd} \to \R^{\width}$, and
$\fc = (\c{\phi})_{\phi \in \R^{\fd }} \colon \R^{\fd} \to \R$
 satisfy for all $\phi  = ( \phi_1 ,  \ldots, \phi_{\fd}) \in \R^{\fd}$, $i \in \{1, 2, \ldots, \width \}$, $j \in \{1, 2, \ldots, d \}$ that $\w{\phi}_{i , j} = \phi_{ (i - 1 ) d + j}$, $\b{\phi}_i = \phi_{\width d + i}$, 
$\v{\phi}_i = \phi_{ \width ( d+1 )  + i}$, and $\c{\phi} = \phi_{\fd}$,
let $\scrN_ r = (\realapprox{\phi}{r})_{\phi \in \R^{\fd } } \colon \R^{\fd } \to C(\R^d , \R)$, $r \in \N \cup \{ \infty \}$, 
satisfy for all $r \in \N \cup \{ \infty \}$, $\phi \in \R^{\fd}$, $x = (x_1, \ldots, x_d) \in \R^d$ that 
\begin{equation} \label{setting:sgd:eq:realization}
    \realapprox{\phi}{r} (x) = \c{\phi} + \smallsum_{i = 1}^\width \v{\phi}_i \Rect _r \rbr[\big]{ \b{\phi}_i + \smallsum_{j = 1}^d \w{\phi}_{i,j} x_j } ,
\end{equation}
let $\norm{ \cdot } \colon \rbr*{  \bigcup_{n \in \N} \R^n  } \to \R$ and $\langle \cdot , \cdot \rangle \colon \rbr*{  \bigcup_{n \in \N} (\R^n \times \R^n )  } \to \R$ satisfy for all $n \in \N$, $x=(x_1, \ldots, x_n)$, $y=(y_1, \ldots, y_n ) \in \R^n $ that $\norm{ x } = [ \sum_{i=1}^n \abs*{ x_i } ^2 ] ^{1/2}$ and $\langle x , y \rangle = \sum_{i=1}^n x_i y_i$,
let $(\Omega , \cF , \P)$ be a probability space,
let $X^{n , m} = (X^{n,m}_1, \ldots, X^{n,m}_d) \colon \Omega \to [a,b]^d$, $n, m \in \N_0$, be i.i.d.\ random variables,
let $\cL  \colon \R^\fd \to \R$, $V \colon \R^{\fd } \to \R$, and $I_i^\phi \subseteq \R^d$, $\phi \in \R^{\fd }$, $i \in \{1, 2, \ldots, \width \}$, satisfy for all 
$\phi \in \R^{\fd}$, $i \in \{1, 2, \ldots, \width \}$ that $\cL  ( \phi) =  \E \br[\big]{( \realapprox{\phi}{\infty} ( X^{0,0} ) - \xi  ) ^2 }$, $V(\phi) = \norm{ \phi } ^2 + \abs{ \c{\phi} -  2 \xi  } ^2$, and 
\begin{equation}
I_i^\phi = \cu[\big]{ x = (x_1, \ldots, x_d) \in [a,b]^d \colon \b{\phi}_i + \smallsum_{j = 1}^d \w{\phi}_{i,j} x_j  > 0 },
\end{equation}
let $(M_n)_{n \in \N_0} \subseteq \N$,
let $\fL^n_r \colon  \R^{\fd} \times \Omega  \to \R$, $n \in \N_0$, $r \in \N \cup \{ \infty \}$, satisfy for all
$n \in \N_0$, $r \in \N \cup \{ \infty \}$, $\phi \in \R^{\fd}$, $\omega \in \Omega$ that 
$\fL^n_r ( \phi , \omega ) = \frac{1}{M_n} \sum_{m=1}^{M_n}( \realapprox{\phi}{r} ( X^{n,m} ( \omega )) - \xi  ) ^2$,
let $\fG^n = (\fG^n_1, \ldots, \fG^n_{\fd}) \colon \R^{\fd} \times \Omega \to \R^{\fd}$, $n \in \N_0$, satisfy for all
$n \in \N_0$, $\phi \in \R^\fd$, $\omega \in  \{  \mathscr{w} \in \Omega \colon  ((\nabla _\phi \fL^n_r ) ( \phi , \mathscr{w} ) )_{r \in \N } \text{ is convergent} \}$ that $\fG^n ( \phi , \omega ) = \lim_{r \to \infty} (\nabla _\phi \fL^n_r ) ( \phi, \omega )$,
let $\Theta = (\Theta_n)_{n \in \N_0} \colon \N_0 \times \Omega \to \R^{ \fd }$
be a stochastic process, 
let $(\gamma_n)_{n \in \N_0} \subseteq [0, \infty)$,
assume that $\Theta_0$ and $( X^{n,m} )_{(n,m) \in ( \N_0 ) ^2}$ are independent,
and assume for all $n \in \N_0$, $\omega \in \Omega$ that
$\Theta_{n+1} ( \omega) = \Theta_n (\omega) - \gamma_n \fG ^{n} ( \Theta_n (\omega), \omega)$.
\end{setting}

\subsection{Properties of the approximating empirical risk functions and their gradients}
\label{subsection:empirical:gradient}

\begin{prop} \label{emp:loss:differentiable}
Assume \cref{setting:sgd} and let $n \in \N_0$, $\phi \in \R^\fd$, $\omega \in \Omega$. Then 
\begin{enumerate} [label=(\roman*)]
\item \label{emp:loss:differentiable:item0} it holds for all $r \in \N$, $i \in \{1, 2, \ldots, \width \}$, $j \in \{1, 2, \ldots, d \}$ that
\begin{equation} \label{eq:empgradient:approx}
    \begin{split}
        &\rbr[\big]{ \tfrac{\partial}{\partial \phi_{(i-1)d+j}} \fL_r^n } ( \phi , \omega ) \\
        &= \frac{2}{M_n} \sum_{m=1}^{M_n} \Bigl[ \v{\phi}_i \br[\big]{ X^{n,m}_j ( \omega ) } \rbr[\big]{ \realapprox{\phi}{r} (X^{n,m} ( \omega)) - \xi }
         \br[\big]{ (\Rect _r)' \rbr[\big]{\b{\phi}_i + \smallsum_{k=1}^d \w{\phi}_{i,k} X^{n,m}_k ( \omega ) } } \Bigr] , \\
        &\rbr[\big]{ \tfrac{\partial}{\partial \phi_{\width d + i}} \fL_r^n } ( \phi , \omega ) \\
        &= \frac{2}{M_n} \sum_{m=1}^{M_n} \Bigl[ \v{\phi}_i \rbr[\big]{ \realapprox{\phi}{r} (X^{n,m} ( \omega)) - \xi } 
        \br[\big]{ (\Rect _r)'  \rbr[\big]{\b{\phi}_i + \smallsum_{k=1}^d \w{\phi}_{i,k} X^{n,m}_k ( \omega ) } } \Bigr] , \\
        &\rbr[\big]{ \tfrac{\partial}{\partial \phi_{\width ( d+1 ) + i}} \fL_r^n } ( \phi , \omega ) \\
        &= \frac{2}{M_n} \sum_{m=1}^{M_n} \Bigl[ \br[\big]{ \Rect _r \rbr[\big]{\b{\phi}_i + \smallsum_{k=1}^d \w{\phi}_{i,k} X^{n,m}_k ( \omega ) } } 
         \rbr[\big]{ \realapprox{\phi}{r} (X^{n,m} ( \omega)) - \xi } \Bigr] , \\
        &\text{and} \qquad \rbr[\big]{ \tfrac{\partial}{\partial \phi_{\fd }} \fL_r^n } ( \phi , \omega ) = \frac{2}{M_n} \sum_{m=1}^{M_n} \br*{ \rbr[\big]{ \realapprox{\phi}{r} (X^{n,m} ( \omega)) - \xi } } ,
    \end{split}
\end{equation}
    \item \label{emp:loss:differentiable:item1} it holds that $\limsup_{r \to \infty} \norm{ (\nabla \fL^n_r )(\phi , \omega) - \fG^n ( \phi , \omega)} = 0$, and
    \item \label{emp:loss:differentiable:item2} it holds for all $i \in \{1, 2, \ldots, \width \}$, $j \in \{1, 2, \ldots, d \}$ that
    \begin{equation} \label{eq:def:sgd}
    \begin{split}
         \fG^n_{ (i - 1 ) d + j } ( \phi , \omega) &=  \frac{2}{M_n} \sum_{m=1}^{M_n} \br*{ \v{\phi}_i  \br[\big]{ X^{n,m}_j ( \omega ) } \rbr[\big]{ \realapprox{\phi}{\infty} (X^{n,m} ( \omega)) - \xi }  \indicator{I_i^\phi} ( X^{n,m} ( \omega) )} , \\
        \fG^n_{ \width d + i} ( \phi , \omega) &=  \frac{2}{M_n} \sum_{m=1}^{M_n} \br*{ \v{\phi}_i   \rbr[\big]{ \realapprox{\phi}{\infty} (X^{n,m} ( \omega)) - \xi }\indicator{I_i^\phi} ( X^{n,m} ( \omega) )  } , \\
        \fG^n_{\width ( d+1 )  + i} ( \phi , \omega ) &=  \frac{2}{M_n} \sum_{m=1}^{M_n}  \br*{ \br[\big]{ \Rect _\infty \rbr[\big]{ \b{\phi}_i + \smallsum_{k=1}^d\w{\phi}_{i, k} X^{n,m}_k ( \omega) } } \rbr[\big]{ \realapprox{\phi}{\infty}( X^{n,m} ( \omega)) - \xi } } , \\
        \text{and} \qquad \fG^n_{ \fd } ( \phi , \omega ) &=  \frac{2}{M_n} \sum_{m=1}^{M_n}  \rbr[\big]{ \realapprox{\phi}{\infty} ( X^{n,m} ( \omega) ) - \xi  } .
    \end{split}
\end{equation}
\end{enumerate}
\end{prop}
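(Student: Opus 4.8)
The plan is to follow the same route as the proof of \cref{prop:limit:lr}, with the simplification that the empirical risks $\fL^n_r$ are averages over the finite index set $\{1, 2, \ldots, M_n\}$ rather than integrals, so that every limit below is taken termwise over this finite set and no dominated convergence argument is required.

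First I would fix $n \in \N_0$, $r \in \N$, and $\omega \in \Omega$ and observe that, by \cref{setting:sgd:eq:realization} and the assumption $\Rect_r \in C^1(\R, \R)$, the map $\R^\fd \ni \phi \mapsto \fL^n_r(\phi, \omega) \in \R$ is a finite sum of squares of affine-linear combinations of compositions of $\Rect_r$ with affine-linear maps in $\phi$; hence it is continuously differentiable, and computing $\nabla_\phi \fL^n_r(\cdot, \omega)$ via the chain rule yields \cref{eq:empgradient:approx}. This establishes \cref{emp:loss:differentiable:item0}.

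Next I would establish \cref{emp:loss:differentiable:item2} by passing to the limit $r \to \infty$ in each of the four identities in \cref{eq:empgradient:approx}. Here I would first record that \cref{setting:sgd:eq:realization} together with the pointwise convergence $\Rect_r(y) \to \Rect_\infty(y)$ assumed in \cref{setting:sgd:eq:relu} gives, for each fixed $x \in \R^d$, that $\realapprox{\phi}{r}(x) \to \realapprox{\phi}{\infty}(x)$ as $r \to \infty$; applying this at the points $X^{n,m}(\omega)$ (which are fixed once $\omega$ is fixed), combining it with the assumed pointwise convergence $(\Rect_r)'(y) \to \indicator{(0,\infty)}(y)$ --- note that at $y = 0$ this forces $(\Rect_r)'(0) \to 0 = \indicator{(0,\infty)}(0)$ --- and using the elementary identity $\indicator{(0,\infty)}(\b{\phi}_i + \smallsum_{k=1}^d \w{\phi}_{i,k} X^{n,m}_k(\omega)) = \indicator{I_i^\phi}(X^{n,m}(\omega))$ (immediate from the definition of $I_i^\phi$), one sees that each summand in \cref{eq:empgradient:approx} converges to the corresponding summand in \cref{eq:def:sgd}. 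Since the sums are finite, the limit passes through summation, which proves \cref{emp:loss:differentiable:item2}.

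Finally, \cref{emp:loss:differentiable:item1} follows at once: the convergence just established shows that $((\nabla_\phi \fL^n_r)(\phi, \omega))_{r \in \N}$ is convergent, so by the very definition of $\fG^n$ in \cref{setting:sgd} its limit equals $\fG^n(\phi, \omega)$, i.e.\ $\limsup_{r \to \infty} \norm{(\nabla \fL^n_r)(\phi, \omega) - \fG^n(\phi, \omega)} = 0$. I do not expect a genuine obstacle in this argument; the only steps demanding any care are the bookkeeping of the four families of partial derivatives and the correct treatment of the boundary value $y = 0$ in the convergence of $(\Rect_r)'$, which is precisely why the hypothesis \cref{setting:sgd:eq:relu} is formulated with $\indicator{(0,\infty)}$.
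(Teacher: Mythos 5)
Your proposal is correct and follows essentially the same route as the paper's own (very terse) proof: item (i) by the chain rule and the $C^1$ assumption on $\Rect_r$, and items (ii) and (iii) by letting $r\to\infty$ termwise in the finite sums using the pointwise convergence assumption \cref{setting:sgd:eq:relu} together with the definition of $\fG^n$ as the limit of $(\nabla_\phi\fL^n_r)(\phi,\omega)$. Your additional remarks (no dominated convergence needed since the sums are finite, and the identification of $\indicator{(0,\infty)}$ with $\indicator{I_i^\phi}$ at the sample points) simply make explicit what the paper leaves implicit.
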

\begin{cproof} {emp:loss:differentiable}
\Nobs that the assumption that for all $r \in \N$ it holds that $\Rect _r \in C^1( \R , \R)$ and the chain rule prove \cref{emp:loss:differentiable:item0}. Next \nobs that \cref{emp:loss:differentiable:item0} and the assumption that for all $x \in \R$ we have that $\limsup_{r \to \infty} \rbr{ \abs { \Rect _r ( x ) - \Rect _\infty ( x ) } + \abs { (\Rect _r)' ( x ) - \indicator{(0, \infty)} ( x ) }} = 0$ establish \cref{emp:loss:differentiable:item1,emp:loss:differentiable:item2}.
\end{cproof}

\subsection{Properties of the expectations of  the empirical risk functions}
\label{subsection:expectation:risk}

\begin{prop} \label{prop:unbiased}
Assume \cref{setting:sgd}. Then
it holds for all $n \in \N_0$, $\phi \in \R^{\fd}$ that $\E [ \fL ^n_\infty ( \phi ) ] = \cL  ( \phi )$.
\end{prop}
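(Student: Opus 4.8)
The plan is to reduce the claim to the elementary observation that, for each $n \in \N_0$, the empirical risk $\fL^n_\infty ( \phi )$ is an arithmetic mean of $M_n$ real random variables, each of which has the same distribution as $( \realapprox{\phi}{\infty} ( X^{0,0} ) - \xi )^2$, and whose expectation therefore equals $\cL ( \phi )$ by the very definition of $\cL$. No deeper input is needed; the argument is essentially just linearity of the expectation together with the identical-distribution hypothesis.

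First I would fix $n \in \N_0$ and $\phi \in \R^\fd$ and record the measurability and integrability that make the expectations below meaningful: the map $[a,b]^d \ni x \mapsto ( \realapprox{\phi}{\infty} ( x ) - \xi )^2 \in \R$ is continuous (by \cref{setting:sgd:eq:realization} together with the continuity of $\Rect_\infty$), hence Borel measurable, and it is bounded on the compact set $[a,b]^d$; consequently, for every $m \in \{ 1, 2, \ldots, M_n \}$ the function $\Omega \ni \omega \mapsto ( \realapprox{\phi}{\infty} ( X^{n,m} ( \omega ) ) - \xi )^2 \in \R$ is $\cF$-measurable and bounded, so each expectation appearing below is well defined and finite.

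Next I would exploit the assumption that the random variables $X^{k,m}$, $k, m \in \N_0$, are independent and identically distributed; in particular, since they are identically distributed, for every $m \in \{ 1, 2, \ldots, M_n \}$ the pushforward of $\P$ under $X^{n,m}$ agrees with the pushforward of $\P$ under $X^{0,0}$ on $\cB ( [a,b]^d )$. The change-of-variables formula for pushforward measures then yields
\begin{align*}
  \E \br[\big]{ ( \realapprox{\phi}{\infty} ( X^{n,m} ) - \xi )^2 }
  &= \int_{[a,b]^d} ( \realapprox{\phi}{\infty} ( x ) - \xi )^2 \, \rbr[\big]{ \P \circ ( X^{0,0} )^{-1} } ( \d x ) \\
  &= \E \br[\big]{ ( \realapprox{\phi}{\infty} ( X^{0,0} ) - \xi )^2 } = \cL ( \phi ) .
\end{align*}
Combining this with the definition $\fL^n_\infty ( \phi , \omega ) = \frac{1}{M_n} \sum_{m=1}^{M_n} ( \realapprox{\phi}{\infty} ( X^{n,m} ( \omega ) ) - \xi )^2$ and the linearity of the expectation gives $\E [ \fL^n_\infty ( \phi ) ] = \frac{1}{M_n} \sum_{m=1}^{M_n} \cL ( \phi ) = \cL ( \phi )$, which is the assertion of \cref{prop:unbiased}.

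I do not expect a genuine obstacle here: the statement is a routine consequence of the definitions and the i.i.d.\ hypothesis. The only two points deserving a sentence of care are the measurability and boundedness of the integrands, which must be checked before one may take expectations, and the explicit identification of the laws of $X^{n,m}$ and $X^{0,0}$, which is exactly where the identical-distribution assumption enters.
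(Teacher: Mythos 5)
Your proposal is correct and follows essentially the same route as the paper: the paper's proof likewise just combines linearity of the expectation with the identical-distribution hypothesis to replace each $\E [ ( \realapprox{\phi}{\infty} ( X^{n,m} ) - \xi )^2 ]$ by $\E [ ( \realapprox{\phi}{\infty} ( X^{0,0} ) - \xi )^2 ] = \cL ( \phi )$. Your additional remarks on measurability, boundedness, and the pushforward identification are fine but are left implicit in the paper's one-line argument.
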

\begin{cproof}{prop:unbiased}
\Nobs that the assumption that $X^{n,m} \colon \Omega \to [a,b]^d$, $n,m \in \N_0$, are i.i.d.\ random variables ensures that for all $n \in \N_0$, $\phi \in \R^{\fd}$ it holds that
\begin{equation}
   \E [ \fL_\infty ^n ( \phi ) ] = \tfrac{1}{M_n} \smallsum_{m=1}^{M_n}  \E \br[\big]{ ( \realapprox{\phi}{\infty} ( X^{n,m} ) - \xi  ) ^2 } = \E \br[\big]{ ( \realapprox{\phi}{\infty} ( X^{0,0} ) - \xi  ) ^2} = \cL  ( \phi ).
\end{equation}
\end{cproof}

\begin{lemma} \label{lem:sgd:measurable}
Assume \cref{setting:sgd} and let $\bbF_n \subseteq \cF$, $n \in \N_0$, satisfy for all $n \in \N$ that $\bbF_0 = \sigma ( \Theta_0)$ and $\bbF_n = \sigma \rbr[\big]{ \Theta_0 , \rbr{ X^{\fn, \fm}}_{(\fn , \fm) \in (\N \cap [0,n) ) \times \N_0  } }$. Then
\begin{enumerate} [label=(\roman*)]
    \item \label{lem:sgd:measurable:item0} it holds for all $n \in \N_0$ that $\R^{\fd} \times \Omega \ni (\phi , \omega) \mapsto \fG^n ( \phi , \omega) \in \R^\fd$ is $(\cB(\R^\fd ) \otimes \bbF_{n+1} )/\cB(\R^\fd)$-measurable,
    \item \label{lem:sgd:measurable:item1} it holds for all $n \in \N_0$ that $\Theta_n$ is $\bbF_n/\cB(\R^\fd)$-measurable,
    and
    \item \label{lem:sgd:measurable:item2} it holds for all $m, n \in \N_0$ that $\sigma ( X^{n , m} )$ and $\bbF_n$ are independent.
\end{enumerate}
\end{lemma}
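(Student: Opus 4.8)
The plan is to prove \cref{lem:sgd:measurable:item0,lem:sgd:measurable:item1,lem:sgd:measurable:item2} in this order, with \cref{lem:sgd:measurable:item0} feeding the induction in \cref{lem:sgd:measurable:item1} and \cref{lem:sgd:measurable:item2} handled separately by an independence-of-families argument. The structural input throughout is the explicit closed-form representation of the generalized gradient $\fG^n$ supplied by \cref{emp:loss:differentiable} (specifically \cref{eq:def:sgd}), which holds for \emph{every} $\phi\in\R^\fd$ and \emph{every} $\omega\in\Omega$ (the underlying $r\to\infty$ limits converge pointwise in $(\phi,\omega)$) and which exhibits $\fG^n(\phi,\cdot)$ as a single fixed measurable function evaluated at the mini-batch data $X^{n,1},\ldots,X^{n,M_n}$.

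For \cref{lem:sgd:measurable:item0}, I would first read off from \cref{eq:def:sgd} that, for each $n\in\N_0$, every component $\fG^n_k$ ($k\in\{1,\ldots,\fd\}$) is a finite sum — with constant prefactor $\nicefrac{1}{M_n}$ — of products of the quantities $\v{\phi}_i$, the coordinates $X^{n,m}_j(\omega)$, the values $\realapprox{\phi}{\infty}(X^{n,m}(\omega))$, the values of $\Rect_\infty$ applied to the affine map $x\mapsto \b{\phi}_i+\smallsum_{j=1}^d\w{\phi}_{i,j}x_j$ at $x=X^{n,m}(\omega)$, and the indicators $\indicator{I_i^\phi}(X^{n,m}(\omega))$, the last of which equal $\indicator{(0,\infty)}$ of that same affine map. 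Hence there is a map $G\colon\R^\fd\times(\R^d)^{M_n}\to\R^\fd$, assembled by sums and products from (a) the jointly continuous maps $(\phi,x)\mapsto\v{\phi}_i$, $(\phi,x)\mapsto x_j$, $(\phi,x)\mapsto\realapprox{\phi}{\infty}(x)$, and $(\phi,x)\mapsto\Rect_\infty(\b{\phi}_i+\smallsum_{j=1}^d\w{\phi}_{i,j}x_j)$, and (b) the maps $(\phi,x)\mapsto\indicator{(0,\infty)}(\b{\phi}_i+\smallsum_{j=1}^d\w{\phi}_{i,j}x_j)$, which are $(\cB(\R^\fd)\otimes\cB(\R^d))/\cB(\R)$-measurable as compositions of continuous maps with the Borel function $\indicator{(0,\infty)}$, such that $\fG^n(\phi,\omega)=G(\phi,X^{n,1}(\omega),\ldots,X^{n,M_n}(\omega))$ for all $(\phi,\omega)\in\R^\fd\times\Omega$. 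Since sums and products of Borel functions are Borel and the Borel $\sigma$-algebra of the second-countable space $\R^\fd\times(\R^d)^{M_n}$ coincides with $\cB(\R^\fd)\otimes\cB((\R^d)^{M_n})$, the map $G$ is $(\cB(\R^\fd)\otimes\cB((\R^d)^{M_n}))/\cB(\R^\fd)$-measurable. As each $X^{n,m}$ is $\bbF_{n+1}/\cB(\R^d)$-measurable by the definition of $\bbF_{n+1}$ and the first-coordinate projection $\R^\fd\times\Omega\to\R^\fd$ is $(\cB(\R^\fd)\otimes\bbF_{n+1})/\cB(\R^\fd)$-measurable, the map $(\phi,\omega)\mapsto(\phi,X^{n,1}(\omega),\ldots,X^{n,M_n}(\omega))$ is $(\cB(\R^\fd)\otimes\bbF_{n+1})/(\cB(\R^\fd)\otimes\cB((\R^d)^{M_n}))$-measurable, and composing it with $G$ yields \cref{lem:sgd:measurable:item0}.

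For \cref{lem:sgd:measurable:item1} I would argue by induction on $n\in\N_0$: the base case $n=0$ is immediate because $\Theta_0$ is $\sigma(\Theta_0)=\bbF_0$-measurable, and for the induction step, assuming $\Theta_n$ is $\bbF_n/\cB(\R^\fd)$-measurable and using $\bbF_n\subseteq\bbF_{n+1}$, the map $\omega\mapsto(\Theta_n(\omega),\omega)$ is $\bbF_{n+1}/(\cB(\R^\fd)\otimes\bbF_{n+1})$-measurable, so composing with $\fG^n$ (measurable by \cref{lem:sgd:measurable:item0}) shows that $\omega\mapsto\fG^n(\Theta_n(\omega),\omega)$ is $\bbF_{n+1}/\cB(\R^\fd)$-measurable, whence $\Theta_{n+1}=\Theta_n-\gamma_n\fG^n(\Theta_n,\cdot)$ is $\bbF_{n+1}/\cB(\R^\fd)$-measurable. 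For \cref{lem:sgd:measurable:item2} I would use that the assumed independence of $\Theta_0$ and the family $(X^{n,m})_{(n,m)\in(\N_0)^2}$, together with the fact that $(X^{n,m})_{(n,m)\in(\N_0)^2}$ are i.i.d., implies that the family of $\sigma$-algebras $\{\sigma(\Theta_0)\}\cup\{\sigma(X^{\fn,\fm})\colon(\fn,\fm)\in(\N_0)^2\}$ is independent (verify the product rule on finite sub-collections, splitting off the $\Theta_0$-event first and then invoking independence of the $X$-family); since $\bbF_n$ is generated by the sub-collection consisting of $\sigma(\Theta_0)$ together with those $\sigma(X^{\fn,\fm})$ having first index $\fn<n$, which does not include $\sigma(X^{n,m})$, the grouping (coalition) lemma for independent families of $\sigma$-algebras gives that $\sigma(X^{n,m})$ and $\bbF_n$ are independent.

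The main obstacle is \cref{lem:sgd:measurable:item0}: the delicate point is the joint $(\phi,\omega)$-measurability, in particular that $(\phi,x)\mapsto\indicator{I_i^\phi}(x)$ is jointly Borel measurable even though $\phi\mapsto\indicator{I_i^\phi}$ is highly discontinuous, and the identification of the Borel $\sigma$-algebra of the product space with the product $\sigma$-algebra so that $G$ may legitimately be composed with a random element valued in that product $\sigma$-algebra. Once \cref{lem:sgd:measurable:item0} is established, \cref{lem:sgd:measurable:item1,lem:sgd:measurable:item2} are routine.
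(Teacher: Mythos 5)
Your proposal is correct, but for \cref{lem:sgd:measurable:item0} you take a genuinely different route from the paper. You work directly with the limiting representation \cref{eq:def:sgd}, factor $\fG^n(\phi,\omega)$ as $G(\phi,X^{n,1}(\omega),\ldots,X^{n,M_n}(\omega))$ for a single Borel map $G$ on $\R^\fd\times(\R^d)^{M_n}$, and handle the discontinuous indicator terms by writing $\indicator{I_i^\phi}(x)=\indicator{(0,\infty)}(\b{\phi}_i+\smallsum_{j=1}^d\w{\phi}_{i,j}x_j)$, i.e.\ as a Borel function composed with a jointly continuous one; joint measurability then follows from the identification $\cB(\R^\fd\times(\R^d)^{M_n})=\cB(\R^\fd)\otimes\cB((\R^d)^{M_n})$ and composition. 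The paper instead never confronts the discontinuity in $\phi$ directly: it first establishes joint measurability of each smooth approximation $(\phi,\omega)\mapsto(\nabla_\phi\fL^n_r)(\phi,\omega)$ via a Carath\'eodory-type argument (continuity in $\phi$ for fixed $\omega$ from \cref{lem:realization:lip} and \cref{eq:empgradient:approx}, measurability in $\omega$ for fixed $\phi$, then \cite[Lemma 2.4]{BeckBeckerGrohsJaafariJentzen2018arXiv}), and then uses \cref{emp:loss:differentiable:item1} to realize $\fG^n$ as an everywhere pointwise limit of jointly measurable maps. Your version is more self-contained and makes the role of the explicit gradient formula transparent, at the cost of having to argue joint Borel measurability of a map that is discontinuous in $\phi$; the paper's version outsources that difficulty to a standard Carath\'eodory lemma plus closure of measurability under pointwise limits. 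Your treatments of \cref{lem:sgd:measurable:item1,lem:sgd:measurable:item2} coincide with the paper's (your item~(iii) is in fact more detailed than the paper's one-line appeal to the independence assumptions, spelling out the joint independence of the collection $\{\sigma(\Theta_0)\}\cup\{\sigma(X^{\fn,\fm})\}$ and the grouping lemma), and all steps check out.
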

\begin{cproof}{lem:sgd:measurable}
\Nobs that \cref{lem:realization:lip} and \cref{eq:empgradient:approx} prove that for all $n \in \N_0$, $r \in \N$, $\omega \in \Omega$ it holds that $\R^\fd \ni \phi \mapsto (\nabla _\phi \fL_r^n ) ( \phi , \omega ) \in \R^\fd$ is continuous.
Furthermore, \nobs that \cref{eq:empgradient:approx} and the fact that for all $n, m \in \N_0$ it holds that $X^{n,m}$ is $\bbF_{n+1}/\cB ( [a,b]^d)$-measurable assure that for all $n \in \N_0$, $r \in \N$, $\phi \in \R^\fd$ it holds that $\Omega \ni \omega \mapsto (\nabla _\phi \fL_r^n ) ( \phi , \omega ) \in \R^\fd$ is $\bbF_{n+1}/\cB(\R^\fd)$-measurable.
This and, e.g., \cite[Lemma 2.4]{BeckBeckerGrohsJaafariJentzen2018arXiv} show that for all $n \in \N_0$, $r \in \N$ it holds that $\R^{\fd} \times \Omega \ni (\phi , \omega) \mapsto ( \nabla_\phi \fL_r^n ) ( \phi , \omega) \in \R^\fd$ is $(\cB(\R^\fd ) \otimes \bbF_{n+1} )/\cB(\R^\fd)$-measurable.
Combining this with \cref{emp:loss:differentiable:item1} in \cref{emp:loss:differentiable} demonstrates that for all $n \in \N_0$ it holds that 
\begin{equation} 
\R^{\fd} \times \Omega \ni (\phi , \omega) \mapsto \fG^n ( \phi , \omega) \in \R^\fd
\end{equation} 
is $(\cB(\R^\fd ) \otimes \bbF_{n+1} )/\cB(\R^\fd)$-measurable.
This establishes \cref{lem:sgd:measurable:item0}.
In the next step we prove \cref{lem:sgd:measurable:item1} by induction on $n \in \N_0$.
\Nobs that the fact that $\bbF_0 = \sigma  ( \Theta_0)$ ensures that $\Theta_0$ is $\bbF_0/\cB(\R^\fd)$-measurable.
For the induction step let $n \in \N_0$ satisfy that $\Theta_n$ is $\bbF_n/\cB ( \R^\fd)$-measurable.
\Nobs that \cref{lem:sgd:measurable:item0} and the fact that $\bbF_n \subseteq \bbF_{n+1}$ ensure that $\fG^n ( \Theta_n)$ is $ \bbF_{n+1}/\cB(\R^\fd)$-measurable.
Combining this, the fact that $\bbF_n \subseteq \bbF_{n+1}$, and the assumption that $\Theta_{n+1} = \Theta_n - \gamma_n \fG ^{n} ( \Theta_n)$ demonstrates that $\Theta_{n+1}$ is $\bbF_{n+1}/\cB(\R^\fd)$-measurable.
Induction thus establishes \cref{lem:sgd:measurable:item1}.
Next \nobs that the assumption that $X^{n,m}$, $n, m \in \N_0$, are independent and the assumption that $\Theta_0$ and $( X^{n,m} )_{(n,m) \in ( \N_0 ) ^2}$ are independent establish \cref{lem:sgd:measurable:item2}.
\end{cproof}

\begin{cor} \label{cor:expected:loss}
Assume \cref{setting:sgd}. Then it holds for all $n \in \N_0$ that $\E [ \fL_\infty ^n ( \Theta_n ) ] = \E [ \cL  ( \Theta_n ) ]$.
\end{cor}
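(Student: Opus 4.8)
The plan is to deduce the identity from the unbiasedness result $\E[\fL^n_\infty(\phi)] = \cL(\phi)$ in \cref{prop:unbiased} together with the measurability and independence statements in \cref{lem:sgd:measurable}, by means of a standard disintegration (``freezing the parameter'') argument. Fix $n \in \N_0$, let $\bbF_k \subseteq \cF$, $k \in \N_0$, be as in \cref{lem:sgd:measurable}, and let $\Psi \colon \R^\fd \to [0, \infty]$ be the function given by $\Psi(\phi) = \E[\fL^n_\infty(\phi)]$ for $\phi \in \R^\fd$.

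First I would record the relevant measurability. Since the realization map $\R^\fd \times \R^d \ni (\phi, x) \mapsto \realapprox{\phi}{\infty}(x) \in \R$ is continuous (hence Borel measurable) and the random variables $X^{n,m}$, $m \in \N_0$, are $\cF/\cB([a,b]^d)$-measurable, the map $\R^\fd \times \Omega \ni (\phi, \omega) \mapsto \fL^n_\infty(\phi, \omega) \in [0, \infty)$ is $(\cB(\R^\fd) \otimes \cF)/\cB(\R)$-measurable and, for every $\phi \in \R^\fd$, the random variable $\omega \mapsto \fL^n_\infty(\phi, \omega)$ is $\sigma((X^{n,m})_{m \in \N_0})/\cB(\R)$-measurable. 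Tonelli's theorem then ensures that $\Psi$ is Borel measurable, and \cref{prop:unbiased} shows that $\Psi = \cL$.

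Second I would establish the needed independence. \cref{lem:sgd:measurable:item1} shows that $\Theta_n$ is $\bbF_n/\cB(\R^\fd)$-measurable, and the assumptions in \cref{setting:sgd} that the random variables $X^{\fn,\fm}$, $(\fn, \fm) \in (\N_0)^2$, are i.i.d.\ and that $\Theta_0$ is independent of $(X^{\fn,\fm})_{(\fn,\fm) \in (\N_0)^2}$ imply that the $\sigma$-algebra $\sigma((X^{n,m})_{m \in \N_0})$ is independent of $\bbF_n$; this is the joint-independence strengthening of \cref{lem:sgd:measurable:item2} and is proved in exactly the same way. In particular, $\Theta_n$ and the random vector $(X^{n,1}, \ldots, X^{n,M_n})$ are independent.

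Finally, a disintegration argument (for instance, by applying Tonelli's theorem to the product of the distribution of $\Theta_n$ and the joint law of $(X^{n,1}, \ldots, X^{n,M_n})$, which is valid since $\fL^n_\infty \geq 0$ so that no integrability hypothesis is needed) yields $\E[\fL^n_\infty(\Theta_n)] = \E[\Psi(\Theta_n)]$, and combining this with $\Psi = \cL$ gives $\E[\fL^n_\infty(\Theta_n)] = \E[\cL(\Theta_n)]$, which is the assertion. I expect the only point requiring care to be this last ``freezing'' step, namely the verification that $\Theta_n$ is measurable with respect to a $\sigma$-algebra that is independent of every data point entering $\fL^n_\infty$ at step $n$; once the measurability and independence bookkeeping of the first two steps is settled, the conclusion follows immediately.
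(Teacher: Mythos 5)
Your proposal is correct and follows essentially the same route as the paper: the paper also reduces the claim to \cref{prop:unbiased} together with the measurability and independence statements of \cref{lem:sgd:measurable}, and then applies a ``freezing the parameter'' lemma (citing \cite[Lemma 2.8]{JentzenKuckuckNeufeldVonWurstemberger2018arxiv}) with $\cG = \bbF_n$, $X = (X^{n,1},\ldots,X^{n,M_n})$, and $Y = \Theta_n$. Your only deviation is that you prove the freezing step directly via Tonelli's theorem on the product of the laws instead of citing that external lemma, and you correctly flag the one point needing care, namely that the joint independence of $\bbF_n$ from the whole data vector $(X^{n,1},\ldots,X^{n,M_n})$ (rather than from each $X^{n,m}$ separately, as literally stated in \cref{lem:sgd:measurable:item2}) is what is actually used.
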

\begin{cproof} {cor:expected:loss}
Throughout this proof let $\bbF_n \subseteq \cF$, $n \in \N_0$, satisfy for all $n \in \N$ that $\bbF_0 = \sigma ( \Theta_0)$ and $\bbF_n = \sigma \rbr{ \Theta_0 , \rbr{ X^{\fn, \fm}}_{(\fn , \fm) \in (\N \cap [0,n) ) \times \N_0  } }$ and let $\bfL^n \colon ([a,b]^d)^{ M_n} \times \R^{ \fd } \to [0, \infty)$, $n \in \N_0$, satisfy for all $n \in \N_0$, $ x_1, x_2, \ldots, x_{M_n} \in [a,b]^{d }$, $\phi \in \R^{ \fd }$ that 
\begin{equation} \label{cor:expectedloss:eq1}
\bfL^n ( x_1, \ldots, x_{M_n} , \phi ) = \tfrac{1}{M_n} \smallsum_{m=1}^{M_n} ( \realapprox{\phi}{\infty} ( x_m ) - \xi  ) ^ 2.
\end{equation}
\Nobs that \cref{cor:expectedloss:eq1} implies that for all $n \in \N_0$, $\phi \in \R^\fd$, $\omega \in \Omega$ it holds that 
\begin{equation} \label{cor:expectedloss:eq2}
\fL ^n _\infty ( \phi , \omega ) = \bfL^n ( X^{n, 1} ( \omega ), \ldots, X^{n, M_n}(\omega) , \phi).
\end{equation}
Hence, we obtain that for all $n \in \N_0$ it holds that
\begin{equation} \label{cor:expectedloss:eq3}
    \fL_\infty^n ( \Theta_n ) = \bfL^n ( X^{n,1}, \ldots, X^{n , M_n}, \Theta_n ).
\end{equation}
Furthermore, \nobs that \cref{cor:expectedloss:eq2} and \cref{prop:unbiased} imply that for all $n \in \N_0$, $\phi \in \R^{\fd}$ we have that $\E [ \bfL^n ( (X^{n, 1}, \ldots, X^{n, M_n}) , \phi ) ] = \cL  ( \phi)$.
This, \cref{lem:sgd:measurable}, \cref{cor:expectedloss:eq3}, and, e.g., \cite[Lemma 2.8]{JentzenKuckuckNeufeldVonWurstemberger2018arxiv} (applied with $(\Omega, \cF, \P) \with (\Omega, \cF, \P)$, $\cG \with \bbF_n$, $(\bbX, \cX) \with (([a , b]^{d}) ^{ M_n} , \cB (([a , b]^{d}) ^{ M_n}) )$, $(\bbY, \cY) \with ( \R^{\fd}, \cB ( \R^\fd) )$, $X \with (\Omega \ni \omega \mapsto ( X^{n, 1} (\omega), \ldots, X^{n, M_n} ( \omega) ) \in ([a , b]^{d}) ^{ M_n} )$, $Y \with (\Omega \ni \omega \mapsto  \Theta_n ( \omega) \in \R^\fd )$ in the notation of \cite[Lemma 2.8]{JentzenKuckuckNeufeldVonWurstemberger2018arxiv}) demonstrate that for all $n \in \N_0$ it holds that $\E [ \fL_\infty ^n ( \Theta_n ) ] = \E [ \bfL^n ( X^{n, 1}, \ldots, X^{n, M_n} , \Theta_n) ] = \E [ \cL  ( \Theta_n ) ]$.
\end{cproof}

\subsection{Upper estimates for generalized gradients of the empirical risk functions}
\label{subsection:empirical:gradient:est}

\begin{lemma} \label{lem:empgradient:est}
Assume \cref{setting:sgd} and let $n \in \N_0$, $\phi \in \R^\fd$, $\omega \in \Omega$. Then $\norm{ \fG ^n ( \phi , \omega ) } ^2 \leq 4( \bfa^2 (d+1) \norm{ \phi } ^2 + 1 ) \fL_\infty ^n ( \phi , \omega)$.
\end{lemma}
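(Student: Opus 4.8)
The plan is to repeat the proof of \cref{lem:gradient:est} with the integral $\int_{[a,b]^d}\cdots\,\mu(\d x)$ replaced by the empirical average $\frac{1}{M_n}\sum_{m=1}^{M_n}\cdots$ over the mini-batch and with the representation \cref{eq:loss:gradient} replaced by the representation \cref{eq:def:sgd} of $\fG^n$ provided by \cref{emp:loss:differentiable:item2} in \cref{emp:loss:differentiable}. The role that Jensen's inequality plays in \cref{lem:grad:est:eq1} is here taken by the discrete Jensen inequality $\bigl(\frac{1}{M_n}\sum_{m=1}^{M_n}a_m\bigr)^2\le\frac{1}{M_n}\sum_{m=1}^{M_n}(a_m)^2$, valid for all $a_1,\ldots,a_{M_n}\in\R$.

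First I would apply this inequality with $a_m=\realapprox{\phi}{\infty}(X^{n,m}(\omega))-\xi$ to read off from the last line of \cref{eq:def:sgd} that $\abs{\fG^n_\fd(\phi,\omega)}^2\le 4\fL^n_\infty(\phi,\omega)$. Next, using the bounds $\abs{X^{n,m}_j(\omega)}\le\bfa$ and $\indicator{I_i^\phi}(X^{n,m}(\omega))\in\{0,1\}$ (which hold since the $X^{n,m}$ take values in $[a,b]^d$ and $\bfa=\max\{\abs{a},\abs{b},1\}$) together with the same discrete Jensen inequality, I would deduce the componentwise bounds $\abs{\fG^n_{(i-1)d+j}(\phi,\omega)}^2\le 4\bfa^2(\v{\phi}_i)^2\fL^n_\infty(\phi,\omega)$ and $\abs{\fG^n_{\width d+i}(\phi,\omega)}^2\le 4(\v{\phi}_i)^2\fL^n_\infty(\phi,\omega)$ for all $i\in\{1,\ldots,\width\}$, $j\in\{1,\ldots,d\}$. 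For the outer-weight derivatives I would use, exactly as in the proof of \cref{lem:gradient:est}, the elementary estimate $\abs{\Rect_\infty(\b{\phi}_i+\sum_{k=1}^d\w{\phi}_{i,k}X^{n,m}_k(\omega))}^2\le\bfa^2(d+1)(\abs{\b{\phi}_i}^2+\sum_{k=1}^d\abs{\w{\phi}_{i,k}}^2)$ (a consequence of $\bfa\ge1$ and the Cauchy--Schwarz inequality) together with discrete Jensen to arrive at $\abs{\fG^n_{\width(d+1)+i}(\phi,\omega)}^2\le 4\bfa^2(d+1)(\abs{\b{\phi}_i}^2+\sum_{k=1}^d\abs{\w{\phi}_{i,k}}^2)\fL^n_\infty(\phi,\omega)$.

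Finally I would sum these four families of estimates over $i\in\{1,\ldots,\width\}$ and $j\in\{1,\ldots,d\}$ and bound the resulting coefficient of $\fL^n_\infty(\phi,\omega)$ using $\bfa^2 d+1\le\bfa^2(d+1)$ and $\sum_{i=1}^\width\bigl[(\v{\phi}_i)^2+\abs{\b{\phi}_i}^2+\sum_{j=1}^d\abs{\w{\phi}_{i,j}}^2\bigr]\le\norm{\phi}^2$, which yields the claim $\norm{\fG^n(\phi,\omega)}^2\le 4(\bfa^2(d+1)\norm{\phi}^2+1)\fL^n_\infty(\phi,\omega)$. There is no genuine obstacle here: the argument runs entirely parallel to that of \cref{lem:gradient:est}, and the only points requiring a little care are that the indicators $\indicator{I_i^\phi}$ and the coordinate factors $X^{n,m}_j(\omega)$ are uniformly bounded (by $1$ and $\bfa$, respectively), so that Jensen's inequality may be applied termwise in $m$, together with the final bookkeeping of the constants via $\bfa^2 d+1\le\bfa^2(d+1)$.
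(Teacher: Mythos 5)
Your proposal is correct and follows essentially the same route as the paper's proof: the paper likewise applies the discrete Jensen inequality in place of the integral version, uses the representation \cref{eq:def:sgd} from \cref{emp:loss:differentiable} to bound each of the four families of components (with the same constants $4\bfa^2(\v{\phi}_i)^2$, $4(\v{\phi}_i)^2$, $4\bfa^2(d+1)(\abs{\b{\phi}_i}^2+\sum_j\abs{\w{\phi}_{i,j}}^2)$, and $4$), and concludes with the same bookkeeping $\bfa^2 d+1\le\bfa^2(d+1)$. No gaps.
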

\begin{cproof} {lem:empgradient:est}
\Nobs that Jensen's inequality implies that
\begin{equation} \label{lem:empgrad:est:eq1}
    \rbr*{ \tfrac{1}{M_n} \smallsum_{m=1}^{M_n} \abs{ \realapprox{\phi}{\infty} ( X^{n,m} ( \omega) ) - \xi  } } ^{ \! 2 } \leq \tfrac{1}{M_n} \smallsum_{m=1}^{M_n} (\realapprox{\phi}{\infty} \rbr[\big]{ X^{n,m} ( \omega) ) - \xi }^2 = \fL_\infty ^n ( \phi , \omega ).
\end{equation}
This and \cref{eq:def:sgd} ensure that for all $i \in \{1, 2, \ldots, \width \}$, $j \in \{1,2, \ldots, d \}$ we have that
\begin{equation} \label{eq:lem:empgradient:est1}
    \begin{split}
        \abs{ \fG^n_{ (i - 1 ) d + j }( \phi , \omega ) } ^2 &= (\v{\phi}_i) ^2 \rbr*{ \frac{2}{M_n} \sum_{m=1}^{M_n} \br*{  \br[\big]{ X^{n,m}_j ( \omega ) } \rbr[\big]{ \realapprox{\phi}{\infty} (X^{n,m} ( \omega)) - \xi }  \indicator{I_i^\phi} ( X^{n,m} ( \omega) ) } } ^{\! \! 2 } \\
        & \leq (\v{\phi}_i) ^2 \rbr*{ \frac{2}{M_n} \sum_{m=1}^{M_n} \br*{  \abs[\big]{ X^{n,m}_j ( \omega ) } \abs[\big]{ \realapprox{\phi}{\infty} (X^{n,m} ( \omega)) - \xi }  \indicator{I_i^\phi} ( X^{n,m} ( \omega) ) }  } ^{ \! \! 2} \\
        &\leq 4 \bfa ^2 (\v{\phi}_i) ^2 \rbr*{ \frac{1}{M_n} \sum_{m=1}^{M_n}  \abs[\big]{ \realapprox{\phi}{\infty} (X^{n,m} ( \omega)) - \xi } } ^{\! \! 2} \leq 4 \bfa ^2 (\v{\phi}_i)^2 \fL^n_\infty (\phi , \omega ).
    \end{split}
\end{equation}
In addition, \nobs that \cref{eq:def:sgd,lem:empgrad:est:eq1} assure that for all $i \in \{1,2, \ldots, \width \}$ it holds that
\begin{equation} \label{eq:lem:empgradient:est2}
\begin{split}
         \abs{ \fG^n_{\width d + i}( \phi , \omega ) }^2 &=  (\v{\phi}_i) ^2 \rbr*{ \frac{2}{M_n} \sum_{m=1}^{M_n} \br*{ \rbr[\big]{ \realapprox{\phi}{\infty} (X^{n,m} ( \omega)) - \xi }  \indicator{I_i^\phi} ( X^{n,m} ( \omega) ) } } ^{\! \! 2} \\
         &\leq 4 (\v{\phi}_i) ^2  \rbr*{ \frac{1}{M_n} \sum_{m=1}^{M_n}  \abs[\big]{ \realapprox{\phi}{\infty} (X^{n,m} ( \omega)) - \xi } } ^{\! \! 2} \leq 4 (\v{\phi}_i)^2 \fL^n_\infty (\phi , \omega ).
         \end{split}
\end{equation}
Furthermore, \nobs that for all $x = (x_1, \ldots, x_d) \in [a,b]^d$, $i \in \{1,2, \ldots, \width \}$ it holds that $\abs[\big]{ \Rect _\infty \rbr[\big]{ \b{\phi}_i + \smallsum_{j = 1}^d \w{\phi}_{i,j} x_j } } ^2 \leq  \rbr[\big]{ \abs{ \b{\phi}_i } + \bfa  \smallsum_{j = 1}^d \abs{\w{\phi}_{i,j} } } ^2 \leq \bfa^2 (d+1) \rbr[\big]{ \abs{ \b{\phi}_i }^2 + \smallsum_{j = 1}^d \abs{\w{\phi}_{i,j} }^2  }$. Combining this, the fact that for all $m,n \in \N_0$, $\omega \in \Omega$ it holds that $X^{n,m} ( \omega ) \in [a,b]^d$, \cref{eq:def:sgd}, and Jensen's inequality demonstrates that for all $i \in \{1,2, \ldots, \width \}$ it holds that
\begin{equation} \label{eq:lem:empgradient:est3}
\begin{split}
    \abs{ \fG^n_{ \width ( d+1 )  + i} ( \phi , \omega ) } ^2 &= \rbr*{ \frac{2}{M_n} \sum_{m=1}^{M_n}  \br*{ \br[\big]{ \Rect _\infty \rbr[\big]{ \b{\phi}_i + \smallsum_{k=1}^d\w{\phi}_{i, k} X^{n,m}_k ( \omega) } } \rbr[\big]{ \realapprox{\phi}{\infty}( X^{n,m} ( \omega)) - \xi } }  } ^{\! \! 2 } \\ 
    &\leq \frac{4}{M_n} \sum_{m=1}^{M_n}  \abs[\big]{ \Rect _\infty \rbr[\big]{ \b{\phi}_i + \smallsum_{k=1}^d\w{\phi}_{i, k} X^{n,m}_k ( \omega) } }^2 \rbr[\big]{ \realapprox{\phi}{\infty}( X^{n,m} ( \omega)) - \xi } ^2  \\
    &\leq 4 \bfa ^2 (d+1) \br*{ \abs{ \b{\phi}_i }^2 + \smallsum_{j = 1}^d \abs{\w{\phi}_{i,j} }^2 } \fL^n_\infty (\phi , \omega).
    \end{split}
\end{equation}
Moreover, \nobs that \cref{eq:def:sgd,lem:empgrad:est:eq1} show that
\begin{equation} \label{eq:lem:empgradient:est4}
    \abs{ \fG^n _ { \fd } ( \phi , \omega ) } ^2 = 4 \rbr*{ \tfrac{1}{M_n} \smallsum_{m=1}^{M_n}  \rbr[\big]{ \realapprox{\phi}{\infty} ( X^{n,m} ( \omega) ) - \xi  } } ^{ \! 2 } \leq 4 \fL^n_\infty ( \phi , \omega ).
\end{equation}
Combining \cref{eq:lem:empgradient:est1,eq:lem:empgradient:est2,eq:lem:empgradient:est3,eq:lem:empgradient:est4} yields
\begin{equation}
\begin{split}
    &\norm{ \fG^n ( \phi , \omega ) } ^2 \\
    &\leq 4 \br*{ \smallsum_{i = 1}^\width \rbr*{\bfa^2 \br*{ \sum_{j = 1}^d  \abs{\v{\phi}_i} ^2 } + \abs{\v{\phi}_i} ^2 + \bfa^2 (d+1) \br*{ \abs{\b{\phi}_i} ^2 + \smallsum_{j = 1}^d \abs{\w{\phi}_{i,j}} ^2  } } } 
    \fL^n_\infty ( \phi , \omega ) + 4 \fL^n_\infty ( \phi , \omega ) \\
    &\leq 4 \bfa^2 \br*{\smallsum_{i=1}^\width \rbr*{(d+1) \abs{\v{\phi}_i}^2 + (d+1) \br*{\abs{\b{\phi}_i}^2 + \smallsum_{j = 1}^d \abs{\w{\phi}_{i,j}} ^2} } }  \fL^n_\infty ( \phi , \omega ) + 4 \fL^n_\infty ( \phi , \omega )\\
    & = 4(\bfa^2(d+1) \norm{ \phi } ^2 + 1) \fL^n_\infty ( \phi , \omega ).
    \end{split}
\end{equation}
\end{cproof}

\begin{lemma} \label{lem:empgradient:bounded}
Assume \cref{setting:sgd} and let $K \subseteq \R^\fd$ be compact. Then
\begin{equation} 
\sup\nolimits_{n \in \N_0} \sup\nolimits_{\phi \in K } \sup\nolimits_{\omega \in \Omega} \norm{ \fG ^n ( \phi , \omega ) } < \infty.
\end{equation}
\end{lemma}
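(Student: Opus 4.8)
The plan is to reduce the claim to a uniform bound on the empirical risks $\fL^n_\infty$ over the compact set $K$ via the pointwise gradient estimate in \cref{lem:empgradient:est}. First I would apply \cref{lem:empgradient:est} to obtain, for all $n \in \N_0$, $\phi \in \R^\fd$, $\omega \in \Omega$, the inequality $\norm{\fG^n(\phi,\omega)}^2 \leq 4(\bfa^2(d+1)\norm{\phi}^2 + 1)\fL^n_\infty(\phi,\omega)$. Since $K$ is compact, $\kappa \coloneqq \sup_{\phi \in K}\norm{\phi} < \infty$, so it suffices to show that $\sup_{n \in \N_0}\sup_{\phi \in K}\sup_{\omega \in \Omega}\fL^n_\infty(\phi,\omega) < \infty$.

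To this end I would invoke \cref{lem:realization:lip}. Note that $K \cup \{0\}$ is again compact and that the ReLU realization function associated to the zero parameter vanishes identically, i.e.\ $\realapprox{0}{\infty}(x) = 0$ for all $x \in \R^d$ (this is immediate from \cref{setting:sgd:eq:realization} together with $\Rect_\infty(y) = \max\{y,0\}$). Applying \cref{lem:realization:lip} to the compact set $K \cup \{0\}$ (with an arbitrary probability measure on $\cB([a,b]^d)$, since only the supremum term of \cref{lem:realization:lip} is needed) yields a constant $\scrL \in \R$ such that $\sup_{x \in [a,b]^d}\abs{\realapprox{\phi}{\infty}(x) - \realapprox{\psi}{\infty}(x)} \leq \scrL\norm{\phi - \psi}$ for all $\phi,\psi \in K \cup \{0\}$. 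Specialising $\psi = 0$ gives $\sup_{\phi \in K}\sup_{x \in [a,b]^d}\abs{\realapprox{\phi}{\infty}(x)} \leq \scrL\kappa$.

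Since $X^{n,m}(\omega) \in [a,b]^d$ for all $n,m \in \N_0$ and all $\omega \in \Omega$, the definition of $\fL^n_\infty$ then gives $\fL^n_\infty(\phi,\omega) = \tfrac{1}{M_n}\sum_{m=1}^{M_n}(\realapprox{\phi}{\infty}(X^{n,m}(\omega)) - \xi)^2 \leq (\scrL\kappa + \abs{\xi})^2$ for all $n \in \N_0$, $\phi \in K$, $\omega \in \Omega$. Plugging this and $\norm{\phi} \leq \kappa$ into the estimate from the first paragraph yields $\norm{\fG^n(\phi,\omega)}^2 \leq 4(\bfa^2(d+1)\kappa^2 + 1)(\scrL\kappa + \abs{\xi})^2$, which is independent of $n$, $\phi$, and $\omega$; taking the supremum over these variables establishes the lemma.

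There is no substantial obstacle here: the argument is a routine combination of \cref{lem:empgradient:est} and \cref{lem:realization:lip}. The only points requiring a little care are enlarging $K$ to the still-compact set $K \cup \{0\}$ so that \cref{lem:realization:lip} is applicable with $\psi = 0$, and observing that $\realapprox{0}{\infty}$ is the zero function so that this choice of $\psi$ controls $\realapprox{\phi}{\infty}$ itself (and hence $\fL^n_\infty$) rather than merely differences of realizations.
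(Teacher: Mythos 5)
Your proposal is correct and follows essentially the same route as the paper: reduce via \cref{lem:empgradient:est} to a uniform bound on $\fL^n_\infty$ over $K$, which in turn follows from a uniform bound on $\sup_{x\in[a,b]^d}\abs{\realapprox{\phi}{\infty}(x)}$ for $\phi\in K$ obtained from \cref{lem:realization:lip}. Your explicit justification of that last bound (enlarging $K$ to $K\cup\{0\}$ and using $\realapprox{0}{\infty}=0$) spells out a step the paper leaves implicit, but the argument is the same.
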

\begin{proof} [Proof of \cref{lem:empgradient:bounded}]
\Nobs that \cref{lem:realization:lip} proves that there exists $\fC \in \R$ which satisfies for all $\phi \in K$ that $ \sup_{x \in [a,b]^d} \abs{ \realapprox{\phi}{\infty} ( x) } \leq \fC$. The fact that for all $n , m\in \N_0$, $\omega \in \Omega$ it holds that $X^{n , m } (\omega) \in [a,b]^d$ hence establishes that for all $n \in \N_0$, $\phi \in K$, $\omega \in \Omega$ we have that
\begin{equation}
\begin{split}
    \fL_\infty ^n ( \phi , \omega ) &= \tfrac{1}{M_n}\smallsum_{m=1}^{M_n} ( \realapprox{\phi}{\infty} ( X^{n , m} ( \omega )) - \xi  ) ^2 \\
    &\leq \tfrac{2}{M_n}\smallsum_{m=1}^{M_n} \br[\big]{ \abs{ \realapprox{\phi}{\infty} ( (X^{n , m} ( \omega )) }^2 +  \xi  ^2 } \leq 2 \fC ^2 + 2 \xi  ^2.
\end{split}
\end{equation}
Combining this and \cref{lem:empgradient:est} completes the proof of \cref{lem:empgradient:bounded}.
\end{proof}

\subsection{Lyapunov type estimates for SGD processes}
\label{subsection:lyapunov:sgd}

\begin{lemma} \label{lem:emp:lyapunov}
Assume \cref{setting:sgd} and let $n \in \N_0$, $\phi \in \R^\fd$, $\omega \in \Omega$. Then $\langle \nabla V ( \phi ) , \fG ^n ( \phi , \omega ) \rangle = 8 \fL_\infty ^n ( \phi , \omega )$.
\end{lemma}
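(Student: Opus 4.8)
The plan is to run exactly the computation from the proof of \cref{prop:lyapunov:gradient} (and its constant-target specialization \cref{cor:lyapunov:const}), but now with the empirical risk $\fL^n_\infty$ and its generalized gradient $\fG^n$ in place of the true risk and $\cG$. First I would invoke \cref{prop:v:gradient} to record $(\nabla V)(\phi)$ coordinatewise: the weight coordinate $\phi_{(i-1)d+j}$ contributes $2\w{\phi}_{i,j}$, the bias coordinate $\phi_{\width d+i}$ contributes $2\b{\phi}_i$, the outer-weight coordinate $\phi_{\width(d+1)+i}$ contributes $2\v{\phi}_i$, and the last coordinate $\phi_\fd$ contributes $2\phi_\fd + 2(\phi_\fd-2\xi) = 4(\c{\phi}-\xi)$.

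Next I would form the Euclidean inner product of this vector with $\fG^n(\phi,\omega)$, inserting the explicit formulas from \cref{eq:def:sgd}. For each fixed $m\in\{1,\ldots,M_n\}$ and each $i\in\{1,\ldots,\width\}$, the weight and bias contributions combine to $\tfrac{4}{M_n}\v{\phi}_i\big(\b{\phi}_i + \smallsum_{j=1}^d \w{\phi}_{i,j}X^{n,m}_j(\omega)\big)\big(\realapprox{\phi}{\infty}(X^{n,m}(\omega))-\xi\big)\indicator{I_i^\phi}(X^{n,m}(\omega))$. The one elementary observation is that for every $x=(x_1,\ldots,x_d)\in[a,b]^d$ one has $\big(\b{\phi}_i + \smallsum_{j=1}^d \w{\phi}_{i,j}x_j\big)\indicator{I_i^\phi}(x) = \Rect_\infty\big(\b{\phi}_i + \smallsum_{j=1}^d \w{\phi}_{i,j}x_j\big)$, since on $I_i^\phi$ the left side is the argument itself and off $I_i^\phi$ both sides vanish. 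Using this and adding the outer-weight contributions (which already carry $\Rect_\infty$) together with the $\phi_\fd$-contribution, the whole inner product collapses to $\tfrac{8}{M_n}\smallsum_{m=1}^{M_n}\Big[(\c{\phi}-\xi) + \smallsum_{i=1}^\width \v{\phi}_i\Rect_\infty\big(\b{\phi}_i + \smallsum_{j=1}^d \w{\phi}_{i,j}X^{n,m}_j(\omega)\big)\Big]\big(\realapprox{\phi}{\infty}(X^{n,m}(\omega))-\xi\big)$.

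Finally I would recognize, directly from the definition of $\realapprox{\phi}{\infty}$ in \cref{setting:sgd:eq:realization}, that the bracketed factor equals $\realapprox{\phi}{\infty}(X^{n,m}(\omega))-\xi$, so the inner product equals $\tfrac{8}{M_n}\smallsum_{m=1}^{M_n}(\realapprox{\phi}{\infty}(X^{n,m}(\omega))-\xi)^2 = 8\fL^n_\infty(\phi,\omega)$, which is the assertion. I do not expect a genuine obstacle: this is a bookkeeping computation entirely parallel to \cref{prop:lyapunov:gradient}, with the constant $\xi$ playing simultaneously the roles of $f(0)$ and $f(x)$ there and the empirical measure $\tfrac{1}{M_n}\sum_{m=1}^{M_n}\delta_{X^{n,m}(\omega)}$ playing the role of $\mu$; the only mild care needed is tracking the extra summand in $\nabla V$ coming from the $\phi_\fd$-coordinate.
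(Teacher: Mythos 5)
Your proposal is correct and follows essentially the same route as the paper's proof: compute $(\nabla V)(\phi)$ via \cref{prop:v:gradient}, pair it coordinatewise with the formulas in \cref{eq:def:sgd}, merge the weight and bias contributions with the outer-weight contribution using the identity $\rbr[\big]{\b{\phi}_i + \smallsum_{j=1}^d \w{\phi}_{i,j}x_j}\indicator{I_i^\phi}(x) = \Rect_\infty\rbr[\big]{\b{\phi}_i + \smallsum_{j=1}^d \w{\phi}_{i,j}x_j}$, and recognize the resulting bracket as $\realapprox{\phi}{\infty}(X^{n,m}(\omega))-\xi$. No gaps.
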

\begin{cproof}{lem:emp:lyapunov}
\Nobs that the fact that $V(\phi ) = \norm{\phi}^2 + \abs{\c{\phi} - 2 \xi }^2$ ensures that
\begin{align}
    &(\nabla V) ( \phi ) \\
    &= 2 \rbr[\big]{ \w{\phi}_{1,1}, \ldots, \w{\phi}_{1 , d}, \w{\phi}_{2 , 1}, \ldots, \w{\phi}_{2 , d}, \ldots, \w{\phi}_{\width , 1}, \ldots, \w{\phi}_{\width , d },  
     \b{\phi}_1, \ldots, \b{\phi}_{\width}, \v{\phi}_1, \ldots, \v{\phi}_{\width},  2 ( \c{\phi} - \xi ) }  . \nonumber
\end{align} 
This and \cref{eq:def:sgd} imply that
\begin{equation}
    \begin{split}
       & \langle (\nabla V) ( \phi) , \fG^n(\phi , \omega) \rangle \\
        &= \frac{4}{M_n} \br*{ \sum_{i = 1}^\width \sum_{j = 1}^d  \w{\phi}_{i,j} \v{\phi}_i \rbr*{ \sum_{m=1}^{M_n} \br*{ \br[\big]{ X^{n,m}_j ( \omega ) } \rbr[\big]{ \realapprox{\phi}{\infty} (X^{n,m} ( \omega)) - \xi }  \indicator{I_i^\phi} ( X^{n,m} ( \omega) ) } } }\\
        &+ \frac{4}{M_n} \br*{ \sum_{i = 1}^\width \b{\phi}_i \v{\phi}_i \rbr*{ \sum_{m=1}^{M_n} \br*{ \rbr[\big]{ \realapprox{\phi}{\infty} (X^{n,m} ( \omega)) - \xi }  \indicator{I_i^\phi} ( X^{n,m} ( \omega) ) } } } \\
        &+ \frac{4}{M_n} \br*{ \sum_{i = 1}^\width \v{\phi}_i \rbr*{ \sum_{m=1}^{M_n}  \br*{  \br[\big]{ \Rect _\infty \rbr[\big]{ \b{\phi}_i + \smallsum_{k=1}^d\w{\phi}_{i, k} X^{n,m}_k ( \omega) } } \rbr[\big]{ \realapprox{\phi}{\infty}( X^{n,m} ( \omega)) - \xi } } } } \\
        &+ \frac{8 (\c{\phi} - \xi )}{M_n} \br*{ \sum_{m=1}^{M_n} \rbr[\big]{ \realapprox{\phi}{\infty}( X^{n,m} ( \omega)) - \xi } } .
    \end{split}
\end{equation}
Hence, we obtain that
\begin{equation} 
    \begin{split} 
     & \langle (\nabla V) ( \phi) , \fG^n(\phi , \omega) \rangle \\ 
        &= \frac{4}{M_n} \br*{ \sum_{i = 1}^\width \v{\phi}_i \rbr*{ \sum_{m=1}^{M_n} \br*{  \rbr[\big]{ \b{\phi}_i + \smallsum_{j = 1}^d \w{\phi}_{i,j}  X^{n,m}_j ( \omega ) } \rbr[\big]{ \realapprox{\phi}{\infty} (X^{n,m} ( \omega)) - \xi }  \indicator{I_i^\phi} ( X^{n,m} ( \omega) ) } } } \\
         &+ \frac{4}{M_n} \br*{ \sum_{i = 1}^\width \v{\phi}_i \rbr*{ \sum_{m=1}^{M_n}  \br*{  \br[\big]{ \Rect _\infty \rbr[\big]{ \b{\phi}_i + \smallsum_{k=1}^d\w{\phi}_{i, k} X^{n,m}_k ( \omega) } } \rbr[\big]{ \realapprox{\phi}{\infty}( X^{n,m} ( \omega)) - \xi } } } } \\
        &+ \frac{8 (\c{\phi} - \xi )}{M_n} \br*{ \sum_{m=1}^{M_n} \rbr[\big]{ \realapprox{\phi}{\infty}( X^{n,m} ( \omega)) - \xi } } \\
        & = \frac{8}{M_n} \sum_{m=1}^{M_n} \br*{ \rbr*{ (\c{\phi} - \xi) + \smallsum_{i = 1}^\width \br*{ \v{\phi}_i \br[\big]{ \Rect _\infty \rbr[\big]{ \b{\phi}_i + \smallsum_{j = 1}^d \w{\phi}_{i,j} X^{n,m}_j ( \omega ) } } }  } \rbr[\big]{ \realapprox{\phi}{\infty}( X^{n,m} ( \omega)) - \xi } }\\
        &= \frac{8}{M_n} \sum_{m=1}^{M_n} \rbr[\big]{ \realapprox{\phi}{\infty}( X^{n,m} ( \omega)) - \xi }^2 = 8 \fL^n_\infty ( \phi , \omega ).
    \end{split}
\end{equation}
\end{cproof}

\cfclear
\begin{lemma} \label{lem:sgd:gen}
Assume \cref{setting:sgd} and let $n \in \N_0$, $\theta \in \R^\fd$, $\omega \in \Omega$. Then
\begin{equation}
\begin{split}
    V ( \theta - \gamma_n \fG^n ( \theta , \omega ) ) - V ( \theta ) 
    &= (\gamma_n)^2 \norm{\fG^n ( \theta , \omega ) }^2 + (\gamma_n) ^2 \abs{\fG^n_\fd ( \theta , \omega ) } ^2 - 8 \gamma_n \fL^n_\infty ( \theta , \omega ) \\
    &\leq 2 (\gamma_n)^2 \norm{\fG^n ( \theta , \omega ) } ^2 - 8 \gamma_n \fL^n_\infty ( \theta , \omega ).
\end{split}
\end{equation}
\end{lemma}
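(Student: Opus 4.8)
The plan is to transfer the proof of \cref{lem:gradient:descent:gen} to the SGD setting essentially verbatim, working pointwise in $\omega \in \Omega$: for each fixed $n \in \N_0$, $\theta \in \R^\fd$, $\omega \in \Omega$ the vector $\fG^n(\theta,\omega)$ is just a fixed element of $\R^\fd$, so the whole argument reduces to a deterministic one-dimensional calculus computation and the randomness plays no role. Concretely, I would fix $n$, $\theta$, $\omega$, set $\bfe = (0,0,\ldots,0,1) \in \R^\fd$, and define $g \colon \R \to \R$ by $g(t) = V(\theta - t\,\fG^n(\theta,\omega))$. Since $V$ is a polynomial, $g \in C^1(\R,\R)$, and the fundamental theorem of calculus together with the chain rule give
\[
  V(\theta - \gamma_n\,\fG^n(\theta,\omega)) = g(\gamma_n) = g(0) + \int_0^{\gamma_n} g'(t)\,\d t = V(\theta) - \int_0^{\gamma_n} \langle (\nabla V)(\theta - t\,\fG^n(\theta,\omega)), \fG^n(\theta,\omega)\rangle\,\d t .
\]

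Next I would split the integrand as $\langle (\nabla V)(\theta - t\,\fG^n(\theta,\omega)), \fG^n(\theta,\omega)\rangle = \langle (\nabla V)(\theta), \fG^n(\theta,\omega)\rangle - \langle (\nabla V)(\theta) - (\nabla V)(\theta - t\,\fG^n(\theta,\omega)), \fG^n(\theta,\omega)\rangle$. For the first summand, \cref{lem:emp:lyapunov} (applied with $\phi \with \theta$) yields $\langle (\nabla V)(\theta), \fG^n(\theta,\omega)\rangle = 8\,\fL^n_\infty(\theta,\omega)$, which does not depend on $t$. For the second summand, \cref{prop:v:gradient} (applied with $\phi \with \theta$, $\psi \with \theta - t\,\fG^n(\theta,\omega)$) gives $(\nabla V)(\theta) - (\nabla V)(\theta - t\,\fG^n(\theta,\omega)) = 2t\,\fG^n(\theta,\omega) + 2\,\c{t\,\fG^n(\theta,\omega)}\,\bfe = 2t\,\fG^n(\theta,\omega) + 2t\,\fG^n_\fd(\theta,\omega)\,\bfe$, hence $\langle (\nabla V)(\theta) - (\nabla V)(\theta - t\,\fG^n(\theta,\omega)), \fG^n(\theta,\omega)\rangle = 2t\,\norm{\fG^n(\theta,\omega)}^2 + 2t\,\abs{\fG^n_\fd(\theta,\omega)}^2$.

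Substituting these two identities into the display above and using $\int_0^{\gamma_n} t\,\d t = (\gamma_n)^2/2$ produces
\[
  V(\theta - \gamma_n\,\fG^n(\theta,\omega)) - V(\theta) = (\gamma_n)^2\norm{\fG^n(\theta,\omega)}^2 + (\gamma_n)^2\abs{\fG^n_\fd(\theta,\omega)}^2 - 8\gamma_n\,\fL^n_\infty(\theta,\omega),
\]
which is the claimed equality. The claimed inequality then follows at once from the elementary bound $\abs{\fG^n_\fd(\theta,\omega)}^2 \leq \norm{\fG^n(\theta,\omega)}^2$ (a single coordinate is dominated by the full Euclidean norm) together with $\gamma_n \geq 0$. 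I do not expect any genuine obstacle here: the only points worth checking are that $V$ being a polynomial makes the fundamental theorem of calculus applicable to $g$, and that \cref{lem:emp:lyapunov} and \cref{prop:v:gradient} are invoked with the correct substitutions; everything else is the same routine integration as in the deterministic case.
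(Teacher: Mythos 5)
Your proposal is correct and follows essentially the same route as the paper's proof: the same auxiliary function $g(t)=V(\theta-t\,\fG^n(\theta,\omega))$, the same splitting of the integrand, and the same invocations of \cref{lem:emp:lyapunov} and \cref{prop:v:gradient}, with the final inequality obtained from $\abs{\fG^n_\fd(\theta,\omega)}^2\leq\norm{\fG^n(\theta,\omega)}^2$. No gaps.
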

\begin{cproof} {lem:sgd:gen}
Throughout this proof let $\bfe \in \R^\fd$ satisfy $\bfe = ( 0 , 0 , \ldots, 0 , 1)$ and let $g \colon \R \to \R$ satisfy for all $t \in \R$ that
\begin{equation} \label{proof:lem:sgd:eq1}
    g ( t ) = V ( \theta - t \fG^n ( \theta , \omega ) ).
\end{equation}
\Nobs that \cref{proof:lem:sgd:eq1} and the fundamental theorem of calculus prove that
\begin{equation}
    \begin{split}
        V ( \theta - \gamma_n \fG^n ( \theta , \omega ) ) 
        &= g ( \gamma_n ) = g ( 0 ) + \int_0^{\gamma_n} g'(t) \, \d t \\
        &= g ( 0 ) + \int_0^{\gamma _n} \langle (\nabla V) ( \theta - t \fG^n ( \theta , \omega ) ) , ( - \fG ( \theta , \omega) ) \rangle \, \d t \\
        &= V ( \theta ) - \int_0^{\gamma_n} \langle ( \nabla V ) ( \theta - t \fG^n ( \theta , \omega ) ) , \fG ( \theta , \omega ) \rangle \, \d t.
    \end{split}
\end{equation}
\cref{lem:emp:lyapunov} hence demonstrates that
\begin{equation}
    \begin{split}
        & V ( \theta - \gamma_n \fG^n ( \theta , \omega ) ) \\
        &= V ( \theta ) - \int_0^{\gamma_n} \langle ( \nabla V ) ( \theta ) , \fG^n ( \theta , \omega ) \rangle \, \d t \\
        & \quad + \int_0^{\gamma_n} \langle ( \nabla V ) ( \theta ) - ( \nabla V ) ( \theta - t \fG^n ( \theta , \omega ) ) , \fG^n ( \theta , \omega ) \rangle \, \d t \\
        &= V ( \theta ) - 8 \gamma_n \fL^n_\infty ( \theta ,\omega ) + \int_0^{\gamma_n} \langle ( \nabla V ) ( \theta ) - ( \nabla V ) ( \theta - t \fG^n ( \theta , \omega ) ) , \fG^n ( \theta , \omega ) \rangle \, \d t.
    \end{split}
\end{equation}
\cref{prop:v:gradient} therefore proves that
\begin{equation}
    \begin{split}
         V ( \theta - \gamma_n \fG^n ( \theta , \omega ) ) 
        &= V ( \theta ) - 8 \gamma_n \fL^n_\infty ( \theta , \omega ) + \int_0^{\gamma_n} \langle 2 t \fG^n ( \theta , \omega ) + 2 \c{t \fG^n ( \theta , \omega )} \bfe , \fG^n ( \theta , \omega ) \rangle \, \d t \\
        &= V ( \theta ) - 8 \gamma_n \fL^n_\infty ( \theta , \omega) + 2 \norm{\fG^n ( \theta , \omega ) } ^2  \br*{ \int_0^{\gamma_n} t \, \d t } \\
        & \quad + 2 \br*{ \int_0^{\gamma_n} \rbr[\big]{ \c{t \fG^n ( \theta , \omega )} \langle \bfe , \fG^n ( \theta , \omega ) \rangle } \, \d t }.
    \end{split}
\end{equation}
Hence, we obtain that
\begin{equation}
    \begin{split}
          &V ( \theta - \gamma_n \fG^n ( \theta , \omega ) ) \\
        &= V ( \theta ) - 8 \gamma_n \fL^n_\infty ( \theta , \omega ) + (\gamma_n) ^2 \norm{\fG^n ( \theta , \omega ) } ^2 + 2 \abs{\langle \bfe , \fG^n ( \theta , \omega ) \rangle }^2 \br*{ \int_0^{\gamma_n} t \, \d t } \\
        &= V ( \theta ) - 8 \gamma_n \fL^n_\infty ( \theta , \omega ) + (\gamma_n)^2 \norm{\fG^n ( \theta , \omega ) } ^2 + (\gamma_n) ^2 \abs{\langle \bfe , \fG^n ( \theta , \omega ) \rangle }^2 \\
        &= V ( \theta ) - 8 \gamma_n \fL^n_\infty ( \theta , \omega ) + (\gamma_n) ^2 \norm{\fG^n ( \theta , \omega ) } ^2 + (\gamma_n) ^2 \abs{\fG^n_\fd ( \theta , \omega ) } ^2.
    \end{split}
\end{equation}
\end{cproof}

\begin{lemma} \label{lem:lyapunov:sgd}
Assume \cref{setting:sgd}. Then it holds for all $n \in \N_0$ that
\begin{equation} \cfadd{def:lyapunov}
      V(\Theta_{n+1}) -  V ( \Theta_n) \leq 8 \rbr*{ (\gamma_n) ^2 \br{ \bfa^2 (d+1) V(\Theta_n) + 1 } - \gamma_n }\fL^n_\infty ( \Theta _ n) .
\end{equation}
\end{lemma}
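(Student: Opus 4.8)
The plan is to mimic, pointwise in $\omega \in \Omega$, the derivation of \cref{cor:est:vtheta_n} from the GD setting, with the deterministic gradient $\cG$ replaced by the empirical gradient $\fG^n(\cdot,\omega)$ and the true risk $\cL_\infty$ replaced by the empirical risk $\fL^n_\infty(\cdot,\omega)$.

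First I would fix $n \in \N_0$ and $\omega \in \Omega$ and put $\theta = \Theta_n(\omega) \in \R^\fd$, so that the recursion in \cref{setting:sgd} reads $\Theta_{n+1}(\omega) = \theta - \gamma_n \fG^n(\theta,\omega)$. Applying \cref{lem:sgd:gen} with this $\theta$ and $\omega$ then yields
\[
    V(\Theta_{n+1}(\omega)) - V(\Theta_n(\omega)) \leq 2(\gamma_n)^2 \norm{\fG^n(\theta,\omega)}^2 - 8\gamma_n \fL^n_\infty(\theta,\omega).
\]
Next I would control the gradient term: \cref{lem:empgradient:est} gives $\norm{\fG^n(\theta,\omega)}^2 \leq 4(\bfa^2(d+1)\norm{\theta}^2 + 1)\fL^n_\infty(\theta,\omega)$, and since $V(\phi) = \norm{\phi}^2 + \abs{\c{\phi} - 2\xi}^2 \geq \norm{\phi}^2$ (this is the lower bound in \cref{prop:lyapunov:norm}), this can be upgraded to $\norm{\fG^n(\theta,\omega)}^2 \leq 4(\bfa^2(d+1)V(\theta) + 1)\fL^n_\infty(\theta,\omega)$.

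Plugging this estimate into the previous display and factoring out $8\fL^n_\infty(\Theta_n(\omega),\omega) \geq 0$ gives exactly
\[
    V(\Theta_{n+1}(\omega)) - V(\Theta_n(\omega)) \leq 8\bigl((\gamma_n)^2[\bfa^2(d+1)V(\Theta_n(\omega)) + 1] - \gamma_n\bigr)\fL^n_\infty(\Theta_n(\omega),\omega),
\]
and since $\omega \in \Omega$ was arbitrary the lemma follows. I do not expect a genuine obstacle here: the statement is just the $\omega$-wise analog of \cref{cor:est:vtheta_n}, and the substantive work — the Lyapunov expansion, the gradient bound, and the comparison $\norm{\cdot}^2 \leq V(\cdot)$ — has already been carried out in \cref{lem:sgd:gen,lem:empgradient:est,prop:lyapunov:norm}; the only care needed is to hold $\omega$ fixed throughout so that no measurability or integrability considerations enter (those are needed only later, e.g.\ in \cref{cor:expected:loss} and in the convergence theorem).
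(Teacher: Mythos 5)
Your proposal is correct and follows essentially the same route as the paper: combine \cref{lem:sgd:gen} with the gradient bound from \cref{lem:empgradient:est}, upgraded via the lower bound $\norm{\cdot}^2 \leq V(\cdot)$ from \cref{prop:lyapunov:norm}. The paper carries this out directly for the random variables $\Theta_n$, which amounts to exactly the pointwise-in-$\omega$ argument you describe.
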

\begin{cproof}{lem:lyapunov:sgd}
\Nobs that \cref{prop:lyapunov:norm} and \cref{lem:empgradient:est} prove that for all $n \in \N_0$ it holds that
\begin{equation}
    \norm{\fG^n ( \Theta_n ) } ^2 \leq 4 \br*{ \bfa^2 (d+1) \norm{ \Theta_n } ^2 + 1 } \fL_\infty ^n ( \Theta_n ) \leq 4 \br*{ \bfa^2 (d+1) V ( \Theta_n ) + 1 } \fL_\infty ^n ( \Theta_n ).
\end{equation}
\cref{lem:sgd:gen} hence demonstrates that for all $n \in \N_0$ it holds that
\begin{equation}
\begin{split}
    V ( \Theta_{n+1} ) - V ( \Theta_n ) 
    &\leq 2 ( \gamma_n)^2 \norm{\fG^n ( \Theta_n ) } ^2 - 8 \gamma_n \fL_\infty^n ( \Theta_n ) \\
    & \leq 8 (\gamma_n)^2 \br*{ \bfa^2 (d+1) V ( \Theta_n ) + 1 } \fL_\infty ^n ( \Theta_n ) - 8 \gamma_n \fL_\infty^n ( \Theta_n ) \\
    & = 8 \rbr*{ (\gamma_n) ^2 \br{ \bfa^2 (d+1) V(\Theta_n) + 1 } - \gamma_n }\fL^n_\infty ( \Theta _ n) .
    \end{split}
\end{equation}
\end{cproof}

\begin{cor} \label{cor:lyapunov:sgd}
Assume \cref{setting:sgd} and assume $\P \rbr{  \sup_{n \in \N_0} \gamma_n  \leq \br{\bfa^2(d+1)V(\Theta_0) + 1}^{-1} } = 1 $. Then it holds for all $n \in \N_0$ that 
\begin{equation} \label{cor:lyapunov:sgd:eq1}
  \P \rbr[\Big]{  V(\Theta_{n+1} ) - V(\Theta_n) \leq - 8 \gamma_n  \rbr*{ 1 - \br{ \sup\nolimits_{m \in \N_0} \gamma_m } \br{\bfa^2(d+1)V(\Theta_0) + 1} } \fL_\infty ^n ( \Theta_n ) \leq 0 } = 1.
    \end{equation}
\end{cor}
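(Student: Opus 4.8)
The plan is to transfer the deterministic induction argument from the proof of \cref{lem:vthetan:decreasing} to the present stochastic setting, carrying it out pathwise on the full-probability event on which the learning rate bound holds. First I would introduce the event $A = \{ \omega \in \Omega \colon \sup_{n \in \N_0} \gamma_n \leq (\bfa^2(d+1)V(\Theta_0(\omega)) + 1)^{-1} \}$, which by hypothesis satisfies $\P(A) = 1$ (and for which the inverse on the right-hand side is well defined since $\bfa^2(d+1)V(\Theta_0) + 1 \geq 1$), and I would record the elementary pathwise nonnegativity $\fL^n_\infty(\phi,\omega) = \tfrac{1}{M_n}\sum_{m=1}^{M_n}(\realapprox{\phi}{\infty}(X^{n,m}(\omega)) - \xi)^2 \geq 0$, valid for every $n \in \N_0$, $\phi \in \R^\fd$, $\omega \in \Omega$.

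Next, for a fixed $\omega \in A$ I would prove by induction on $n \in \N_0$ that $V(\Theta_{n+1}(\omega)) - V(\Theta_n(\omega)) \leq -8\gamma_n(1 - [\sup_{m \in \N_0}\gamma_m][\bfa^2(d+1)V(\Theta_0(\omega)) + 1])\fL^n_\infty(\Theta_n(\omega),\omega) \leq 0$. In the base case $n = 0$, \cref{lem:lyapunov:sgd} gives $V(\Theta_1) - V(\Theta_0) \leq 8((\gamma_0)^2[\bfa^2(d+1)V(\Theta_0)+1] - \gamma_0)\fL^0_\infty(\Theta_0)$; bounding $(\gamma_0)^2[\bfa^2(d+1)V(\Theta_0)+1] \leq \gamma_0[\sup_m\gamma_m][\bfa^2(d+1)V(\Theta_0)+1]$ via $\gamma_0 \leq \sup_m\gamma_m$, using $1 - [\sup_m\gamma_m][\bfa^2(d+1)V(\Theta_0)+1] \geq 0$ (which holds precisely because $\omega \in A$), and invoking $\fL^0_\infty(\Theta_0) \geq 0$ then yields the claim for $n = 0$. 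For the induction step, the induction hypothesis forces $V(\Theta_n) \leq V(\Theta_{n-1}) \leq \cdots \leq V(\Theta_0)$, hence $\bfa^2(d+1)V(\Theta_n) + 1 \leq \bfa^2(d+1)V(\Theta_0) + 1$; substituting this together with $\gamma_n \leq \sup_m\gamma_m$ into the estimate from \cref{lem:lyapunov:sgd} and repeating the computation from the base case establishes the desired inequality for $n$.

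Finally, since the displayed estimate holds for every $\omega \in A$ and every $n \in \N_0$, and since $\P(A) = 1$, I would conclude that \cref{cor:lyapunov:sgd:eq1} is valid for every $n \in \N_0$. I do not anticipate a genuine obstacle in this proof; the only points requiring mild care are the bookkeeping involved in running the deterministic induction of \cref{lem:vthetan:decreasing} simultaneously for $\P$-almost every $\omega$ and the use of the nonnegativity of $\fL^n_\infty$ to upgrade the signed Lyapunov identity from \cref{lem:lyapunov:sgd} to the one-sided inequality $\leq 0$.
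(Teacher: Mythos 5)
Your proposal is correct and follows essentially the same route as the paper: an induction on $n$ using \cref{lem:lyapunov:sgd}, the bound $\gamma_n\leq\sup_{m\in\N_0}\gamma_m$, the monotonicity $V(\Theta_n)\leq\cdots\leq V(\Theta_0)$ supplied by the induction hypothesis, and the nonnegativity of $\fL^n_\infty$. The only cosmetic difference is that you fix $\omega$ in a single full-measure event and argue pathwise, whereas the paper carries the ``$\P$-a.s.'' qualifier through each step of the induction; since \cref{lem:lyapunov:sgd} holds pointwise in $\omega$, the two formulations are equivalent.
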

\begin{cproof} {cor:lyapunov:sgd}
Throughout this proof let $\supgn  \in \R$ satisfy $\supgn  = \sup_{n \in \N_0} \gamma_n$. We now prove \cref{cor:lyapunov:sgd:eq1} by induction on $n \in \N_0$. \Nobs that \cref{lem:lyapunov:sgd} and the fact that $\gamma_0 \leq \supgn$ imply that it holds $\P$-a.s.\ that
\begin{equation}
\begin{split}
    V(\Theta_1) - V(\Theta_0) 
    &\leq 8  \rbr*{  ( \gamma_0 )^2 \br{ \bfa^2 (d+1) V(\Theta_0) + 1 } - \gamma_0 } \fL^0_\infty ( \Theta _ 0) \\
    &\leq  8  \rbr*{  \gamma_0 \supgn \br{ \bfa^2 (d+1) V(\Theta_0) + 1 } - \gamma_0 } \fL^0_\infty ( \Theta _ 0) \\
    & = - 8 \gamma_0 \rbr*{  1 - \supgn  \br{\bfa^2(d+1)V(\Theta_0) + 1} } \fL^0_\infty( \Theta _ 0).
    \end{split}
\end{equation}
This establishes \cref{cor:lyapunov:sgd:eq1} in the base case $n=0$. For the induction step let $n \in \N$ satisfy that for all $m \in \{0, 1, \ldots, n-1\}$ it holds $\P$-a.s. that
\begin{equation} \label{eq:induction:sgd1}
V( \Theta_{m + 1}) - V ( \Theta_{m} ) \leq  - 8 \gamma_m  \rbr*{ 1 - \supgn  \br{ \bfa^2 (d+1) V(\Theta_0) + 1} } \fL_\infty ^m ( \Theta_m ) \leq 0.
\end{equation}
\Nobs that \cref{eq:induction:sgd1} shows that it holds $\P$-a.s.\ that $V(\Theta_n) \leq V(\Theta_{n-1}) \leq \cdots  \leq V(\Theta_0)$. The fact that $\gamma_n \leq \supgn $ and \cref{lem:lyapunov:sgd} hence demonstrate that it holds $\P$-a.s.\ that 
\begin{equation}
    \begin{split}
    V(\Theta_{n+1}) - V(\Theta_n) &\leq 8 \rbr*{ ( \gamma_n )^2 \br{ \bfa^2 (d+1)  V(\Theta_n) + 1 }  - \gamma_n }  \fL_\infty^n ( \Theta _ n) \\
    &\leq 8 \rbr*{ \gamma_n \supgn \br{ \bfa^2 (d+1)  V(\Theta_n) + 1 }  - \gamma_n }  \fL_\infty^n ( \Theta _ n) \\
    &=  - 8 \gamma_n  \rbr*{ 1 - \supgn \br{\bfa^2(d+1)V(\Theta_0) + 1} } \fL_\infty ^n ( \Theta_n ) \leq 0.
    \end{split}
\end{equation}
Induction therefore establishes \cref{cor:lyapunov:sgd:eq1}.
\end{cproof}

\subsection{Convergence analysis for SGD processes in the training of ANNs} \label{subsection:sgd:convergence}

\begin{theorem} \label{theorem:sgd}
Assume \cref{setting:sgd}, let $\delta \in (0, 1)$, assume $\sum_{n=0}^\infty \gamma_n = \infty$, and assume for all $n \in \N_0$ that $\P  \rbr*{ \gamma_n  \br{\bfa^2 (d+1) V ( \Theta _ 0 ) \allowbreak + 1} \leq \delta } = 1$. Then
\begin{enumerate} [label=(\roman*)]
\item \label{theo:sgd:item1} there exists $\fC \in \R$ such that $\P  \rbr*{  \sup_{n \in \N_0}  \norm{ \Theta_n } \leq \fC  } = 1$,
    \item \label{theo:sgd:item2} it holds that $\P  \rbr*{  \limsup_{n \to \infty} \cL  ( \Theta_n ) = 0  } = 1$, and
    \item \label{theo:sgd:item3} it holds that $\limsup_{n \to \infty} \E [ \cL  ( \Theta_n ) ] = 0$.
\end{enumerate}
\end{theorem}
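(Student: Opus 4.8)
The plan is to transcribe the proof of \cref{theo:gd:loss} to the stochastic setting, replacing the deterministic Lyapunov descent by the almost sure one from \cref{cor:lyapunov:sgd} and bridging the empirical and true risks via \cref{cor:expected:loss} together with Tonelli's theorem. For \cref{theo:sgd:item1} I would first note that, since $\sum_{n=0}^\infty\gamma_n=\infty$ and $(\gamma_n)_{n\in\N_0}\subseteq[0,\infty)$, there is some $n_0\in\N_0$ with $\gamma_{n_0}>0$; the hypothesis $\P(\gamma_{n_0}(\bfa^2(d+1)V(\Theta_0)+1)\leq\delta)=1$ together with $\bfa^2(d+1)V(\Theta_0)+1\geq1$ then forces $\gamma_{n_0}\leq\delta<1$ and $V(\Theta_0)\leq(\delta(\gamma_{n_0})^{-1}-1)(\bfa^2(d+1))^{-1}=:c$ almost surely, where $c\in[0,\infty)$ is a deterministic constant. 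The hypotheses also give $\sup_{m\in\N_0}\gamma_m\leq\delta(\bfa^2(d+1)V(\Theta_0)+1)^{-1}<(\bfa^2(d+1)V(\Theta_0)+1)^{-1}$ a.s., so \cref{cor:lyapunov:sgd} applies, and intersecting the almost sure events therein over $n\in\N_0$ yields that a.s.\ $V(\Theta_{n+1})\leq V(\Theta_n)$ for all $n\in\N_0$. Hence a.s.\ $\sup_{n\in\N_0}\norm{\Theta_n}^2\leq\sup_{n\in\N_0}V(\Theta_n)=V(\Theta_0)\leq c$ by \cref{prop:lyapunov:norm}, which proves \cref{theo:sgd:item1} with $\fC=c^{1/2}$.

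Next I would sum the one-step estimate of \cref{cor:lyapunov:sgd}: writing $\supgn=\sup_{m\in\N_0}\gamma_m$ and using $\supgn(\bfa^2(d+1)V(\Theta_0)+1)\leq\delta$ a.s., we get for every $N\in\N$ that a.s.\ $8(1-\delta)\sum_{n=0}^{N-1}\gamma_n\fL^n_\infty(\Theta_n)\leq V(\Theta_0)-V(\Theta_N)\leq c$, and hence a.s.\ $\sum_{n=0}^\infty\gamma_n\fL^n_\infty(\Theta_n)<\infty$. Taking expectations, using Tonelli's theorem and \cref{cor:expected:loss} (which gives $\E[\fL^n_\infty(\Theta_n)]=\E[\cL(\Theta_n)]$), we obtain $\sum_{n=0}^\infty\gamma_n\E[\cL(\Theta_n)]\leq c(8(1-\delta))^{-1}<\infty$, and a second application of Tonelli's theorem then shows that a.s.\ $\sum_{n=0}^\infty\gamma_n\cL(\Theta_n)<\infty$. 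Since $\sum_{n=0}^\infty\gamma_n=\infty$, this forces a.s.\ $\liminf_{n\to\infty}\cL(\Theta_n)=0$.

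It remains to upgrade this to \cref{theo:sgd:item2}, which I would do by the contradiction argument of Lei et al.~\cite{LeiHuLiTang2020}, run pathwise on the full measure event on which $\sup_{n\in\N_0}\norm{\Theta_n}\leq\fC$, $\sum_{n=0}^\infty\gamma_n\cL(\Theta_n)<\infty$, and $\liminf_{n\to\infty}\cL(\Theta_n)=0$ all hold. On this event \cref{lem:empgradient:bounded}, applied with the compact set $\{\phi\in\R^\fd\colon\norm{\phi}\leq\fC\}$, provides a deterministic constant $\fC'$ with $\norm{\Theta_{n+1}-\Theta_n}=\gamma_n\norm{\fG^n(\Theta_n)}\leq\gamma_n\fC'$, and \cref{lem:realization:lip}, applied with the constant target function $[a,b]^d\ni x\mapsto\xi$ and with $\mu$ the distribution of $X^{0,0}$ (so that $\cL$ here coincides with the risk there), yields $\scrL\in\R$ with $\abs{\cL(\phi)-\cL(\psi)}\leq\scrL\norm{\phi-\psi}$ for all $\phi,\psi$ in that ball. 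If $\limsup_{n\to\infty}\cL(\Theta_n)>0$, put $\varepsilon=\tfrac13\min\{1,\limsup_{n\to\infty}\cL(\Theta_n)\}\in(0,\infty)$ and choose $(m_k,n_k)\in\N^2$, $k\in\N$, with $m_k<n_k<m_{k+1}$, $\cL(\Theta_{m_k})>2\varepsilon$, $\cL(\Theta_{n_k})<\varepsilon$, and $\cL(\Theta_j)\geq\varepsilon$ for $m_k\leq j<n_k$; then $\sum_{k=1}^\infty\sum_{j=m_k}^{n_k-1}\gamma_j\leq\varepsilon^{-1}\sum_{j=0}^\infty\gamma_j\cL(\Theta_j)<\infty$, hence $\sum_{k=1}^\infty\norm{\Theta_{n_k}-\Theta_{m_k}}\leq\fC'\sum_{k=1}^\infty\sum_{j=m_k}^{n_k-1}\gamma_j<\infty$, so $\abs{\cL(\Theta_{n_k})-\cL(\Theta_{m_k})}\leq\scrL\norm{\Theta_{n_k}-\Theta_{m_k}}\to0$, which contradicts $\cL(\Theta_{m_k})-\cL(\Theta_{n_k})>\varepsilon$. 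This establishes \cref{theo:sgd:item2}, and then $0\leq\cL(\Theta_n)\leq\sup_{\norm{\phi}\leq\fC}\cL(\phi)<\infty$ a.s.\ (by \cref{lem:realization:lip}) together with the dominated convergence theorem gives \cref{theo:sgd:item3}. The main obstacle is precisely the discrepancy between the empirical risks $\fL^n_\infty(\Theta_n)$, which the Lyapunov descent controls pathwise, and the true risk $\cL$, which appears in the conclusion; the two Tonelli steps combined with \cref{cor:expected:loss} are what close this gap, after which the proof of \cref{theo:gd:loss} transfers essentially verbatim.
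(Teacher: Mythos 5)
Your proposal is correct and follows essentially the same route as the paper's proof: the a.s.\ bound on $V(\Theta_0)$, the telescoped Lyapunov estimate from \cref{cor:lyapunov:sgd}, the passage from $\fL^n_\infty(\Theta_n)$ to $\cL(\Theta_n)$ via \cref{cor:expected:loss} and two interchanges of sum and expectation (the paper invokes monotone convergence where you invoke Tonelli, which is the same step), the pathwise contradiction argument using \cref{lem:empgradient:bounded} and \cref{lem:realization:lip}, and dominated convergence for \cref{theo:sgd:item3}. No gaps.
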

\begin{cproof} {theorem:sgd}
Throughout this proof let $\supgn  \in [0 , \infty]$ satisfy $\supgn  = \sup_{n \in \N_0} \gamma_n$.
\Nobs that the assumption that $\delta < 1$, the assumption that $\sum_{n=0}^\infty \gamma_n = \infty$, and the fact that $\P  \rbr*{ \supgn  \br{\bfa^2 (d+1) V ( \Theta _ 0 ) + 1} \leq \delta } = 1$ demonstrates that $\supgn \in (0, \infty)$. Combining this with the fact that $\P  \rbr*{ \supgn  \br{\bfa^2 (d+1) V ( \Theta _ 0 ) + 1} \leq \delta } = 1$ shows that there exists $\fC \in [1 , \infty)$ which satisfies that 
\begin{equation} \label{eq:v:theta0:bounded}
\P ( V(\Theta_0 ) \leq \fC ) = 1.
\end{equation}
\Nobs that \cref{eq:v:theta0:bounded} and \cref{cor:lyapunov:sgd} ensure that $\P  \rbr{ \sup_{ n \in \N_0 } V(\Theta_n) \leq \fC } = 1$.
Combining this and the fact that for all $\phi \in \R^\fd$ it holds that $\norm{ \phi } \leq \br{ V ( \phi ) }^{1/2}$ demonstrates that 
\begin{equation} \label{eq:norm:thetan:bounded}
    \P \rbr*{ \sup\nolimits_{n \in \N_0} \norm{\Theta_n} \leq \fC } = 1.
\end{equation}
This establishes \cref{theo:sgd:item1}.
Next \nobs that \cref{cor:lyapunov:sgd} and the fact that $\P ( \supgn  \br{\bfa^2 (d+1) V ( \Theta _ 0 ) + 1} \leq \delta ) = 1$ prove that for all $n \in \N_0$ it holds $\P$-a.s.\ that
\begin{equation}
    - \rbr[\big]{V(\Theta_{n} ) - V(\Theta_{n+1}) } \leq - 8 \gamma_n  \rbr*{ 1 - \supgn \br{\bfa^2(d+1)V(\Theta_0) + 1} } \fL_\infty ^n ( \Theta_n ).
\end{equation}
This assures that for all $n \in \N_0$ it holds $\P$-a.s.\ that
\begin{equation}
    \gamma_n \fL_\infty ^n ( \Theta_n ) \leq \frac{ V ( \Theta_n ) - V ( \Theta_{n+1})}{ 8 ( 1 - \supgn \br*{ \bfa^2 ( d+1) V ( \Theta_0 ) + 1 } ) }.
\end{equation}
The fact that $\P  \rbr*{ \supgn  \br{\bfa^2 (d+1) V ( \Theta _ 0 ) + 1} \leq \delta } = 1$ and \cref{eq:v:theta0:bounded} hence show that for all $N \in \N$ it holds $\P$-a.s.\ that
\begin{equation}
\begin{split}
    \sum_{n = 0}^{N - 1} \gamma_n \fL_\infty ^n ( \Theta _ n ) &\leq \frac{\smallsum_{n=0}^{N-1} ( V ( \Theta_n ) - V ( \Theta_{n+1} ) )}{8 ( 1 - \supgn \br*{ \bfa^2 ( d+1) V ( \Theta_0 ) + 1 } )} =
    \frac{ V ( \Theta_{0}) - V ( \Theta_N ) }{8 (1 - \supgn  \br{\bfa^2 (d+1) V ( \Theta _ 0 ) + 1} )} \\
    &\leq \frac{V ( \Theta_0) }{8 (1-\delta) } \leq \frac{\fC}{8(1-\delta)} < \infty.
    \end{split}
\end{equation}
This implies that
\begin{equation} \label{proof:theo:sgd:eq1}
    \sum_{n=0}^\infty \gamma_n \E [ \fL_\infty ^n ( \Theta_n )] = \lim_{N \to \infty} \br*{ \sum_{n=0}^{N-1} \gamma_n \E [ \fL_\infty ^n ( \Theta_n ) ] }   \leq \frac{\fC}{8(1-\delta)} < \infty.
\end{equation}
Furthermore, \nobs that \cref{cor:expected:loss} shows for all $n \in \N_0$ that $\E [ \fL _\infty ^n ( \Theta_n )] = \E [ \cL  ( \Theta_n )]$. 
Combining this with \cref{proof:theo:sgd:eq1} proves that
\begin{equation}
    \sum_{n=0}^\infty \E [\gamma_n  \cL  ( \Theta_n ) ] < \infty.
\end{equation}
The monotone convergence theorem and the fact that for all $n \in \N_0$ it holds that $\cL  ( \Theta_n ) \geq 0$ hence demonstrate that
\begin{equation}
    \E  \br*{ \sum_{n=0}^\infty \gamma_n \cL  ( \Theta_n )  } =    \sum_{n=0}^\infty \E [\gamma_n \cL  ( \Theta_n ) ] < \infty.
\end{equation}
Hence, we obtain that $\P \rbr[\big]{ \sum_{n=0}^\infty \gamma_n \cL  ( \Theta_n ) < \infty } = 1$.
Next let $A \subseteq \Omega$ satisfy
\begin{equation}
    A =  \cu*{ \omega \in \Omega \colon \br*{  \rbr*{  \smallsum_{n=0}^\infty \gamma_n \cL  ( \Theta_n (\omega) ) < \infty  } \wedge  \rbr*{  \sup\nolimits_{n \in \N_0} \norm{ \Theta_n ( \omega) } \leq \fC } } }.
\end{equation}
\Nobs that \cref{eq:norm:thetan:bounded} and the fact that $ \P ( \smallsum_{n=0}^\infty \gamma_n \cL  ( \Theta_n) < \infty ) = 1$ prove that $A \in \cF$ and $\P ( A ) = 1$.
In the following let $\omega \in A$ be arbitrary. \Nobs that the assumption that $\sum_{n=0}^\infty \gamma_n = \infty$ and the fact that $ \smallsum_{n=0}^\infty \gamma_n \cL  ( \Theta_n (\omega) ) < \infty$ demonstrate that $\liminf_{n \to \infty} \cL  ( \Theta_n ( \omega)) = 0$. We intend to prove by a contradiction that $ \limsup_{n \to \infty} \cL  ( \Theta_n ( \omega)) = 0$. In the following we thus assume that $\limsup_{n \to \infty} \cL  ( \Theta_n ( \omega)) > 0$. This implies that there exists $\varepsilon \in (0 , \infty)$ which satisfies that 
\begin{equation}
    0 = \liminf_{n \to \infty} \cL  ( \Theta_n ( \omega)) < \varepsilon < 2 \varepsilon < \limsup_{n \to \infty} \cL  ( \Theta_n ( \omega)).
\end{equation}
Hence, we obtain that there exist $(m_k, n_k) \in \N^2$, $k \in \N$, which satisfy for all $k \in \N$ that $m_k < n_k < m_{k+1}$, $ \cL ( \Theta_{m_k} ( \omega ) ) > 2 \varepsilon$, and $ \cL ( \Theta_{n_k} ( \omega ) ) <  \varepsilon \leq \min_{j \in \N \cap [m_k, n_k ) } \cL ( \Theta_j ( \omega ) )$.
\Nobs that the fact that $\sum_{n=0}^\infty \gamma_n \cL  ( \Theta_n (\omega) ) < \infty$ and the fact that for all $k \in \N$, $j \in \N \cap [m_k, n_k )$ it holds that $1 \leq \varepsilon^{-1} \cL ( \Theta_j ( \omega ) )$ assure that
\begin{equation} \label{theo:sgd:proof1}
    \sum_{k=1}^\infty \sum_{j=m_k}^{n_k - 1} \gamma_j 
    \leq \frac{1}{\varepsilon} \br*{ \sum_{k=1}^\infty \sum_{j=m_k}^{n_k - 1} \rbr*{ \gamma_j \cL  ( \Theta_j (\omega) ) } }
    \leq \frac{1}{\varepsilon } \br*{ \sum_{j=0}^\infty \rbr*{ \gamma_j \cL  ( \Theta_j ( \omega ) ) } }
    < \infty.
\end{equation}
Next \nobs that the fact that $\sup\nolimits_{n \in \N_0} \norm{ \Theta_n ( \omega) } \leq \fC$ and \cref{lem:empgradient:bounded} ensure that there exists $\fD \in \R $ which satisfies for all $n \in \N_0$ that $\norm{ \fG^n ( \Theta_n (\omega) , \omega ) } \leq \fD$. Combining this and \cref{theo:sgd:proof1} proves that
\begin{equation}\label{theo:sgd:proof2}
\begin{split}
    \sum_{k=1}^\infty \norm{ \Theta_{n_k}(\omega) - \Theta_{m_k}(\omega) } 
    &\leq \sum_{k=1}^\infty \sum_{j=m_k}^{n_k - 1} \norm{ \Theta_{j+1}(\omega) - \Theta _j (\omega) }
    = \sum_{k=1}^\infty \sum_{j=m_k}^{n_k - 1} \rbr*{ \gamma_j \norm{ \fG^j ( \Theta_j (\omega) , \omega ) } }
    \\ 
    &\leq \fD  \br*{ \sum_{k=1}^\infty \sum_{j=m_k}^{n_k - 1} \gamma_j } < \infty.
    \end{split}
\end{equation}
Moreover, \nobs that the fact that $\sup\nolimits_{n \in \N_0} \norm{ \Theta_n ( \omega) } \leq \fC$ and \cref{lem:realization:lip} show that there exists $\scrL \in \R$ which satisfies for all
$ m, n \in \N_0$ that $\abs{ \cL  ( \Theta_m ( \omega) ) - \cL  ( \Theta_n ( \omega) ) } \leq \scrL \norm{ \Theta_m ( \omega) - \Theta_n ( \omega) }$.
This and \cref{theo:sgd:proof2} demonstrate that
\begin{equation}
    \limsup_{k \to \infty} \abs{ \cL  ( \Theta_{n_k} ( \omega) ) - \cL  ( \Theta_{m_k} ( \omega) ) } \leq \limsup_{k \to \infty} \rbr[\big]{ \scrL  \norm{ \Theta_{n_k} ( \omega ) - \Theta_{m_k} ( \omega ) } } = 0.
\end{equation}
Combining this and the fact that for all $k \in \N_0$ it holds that $\cL ( \Theta_{n_k} (\omega ) ) < \varepsilon < 2 \varepsilon < \cL ( \Theta_{m_k} ( \omega ) )$ ensures that
\begin{equation}
    0 < \varepsilon \leq \inf_{k \in \N} \abs{ \cL ( \Theta_{n_k} (\omega ) ) - \cL ( \Theta_{m_k} ( \omega ) ) } \leq \limsup_{k \to \infty} \abs{ \cL ( \Theta_{n_k} ( \omega ) ) - \cL ( \Theta_{m_k} ( \omega ) ) } = 0.
\end{equation}
This contradiction proves that $\limsup_{n \to \infty} \cL ( \Theta_n ( \omega)) = 0$. This and the fact that $\P ( A ) = 1 $ establish \cref{theo:sgd:item2}.
Next \nobs that \cref{theo:sgd:item1} and the fact that $\cL$ is continuous show that there exists $\scrC \in \R$ which satisfies that $\P \rbr*{  \sup_{n \in \N_0} \cL ( \Theta_n ) \leq \scrC  } = 1 $. This, \cref{theo:sgd:item2}, and the dominated convergence theorem establish \cref{theo:sgd:item3}.
\end{cproof}

\begin{cor} \label{cor:sgd:norm}
Assume \cref{setting:sgd}, let $\bfA \in \R$ satisfy $\bfA = \max \{ \bfa , \abs{\xi }, d \}$, assume $\sum_{n=0}^\infty \gamma_n = \infty$, and assume for all $n \in \N_0$ that $\P  \rbr*{ 18 \bfA ^5 \gamma_n  \leq ( \norm{\Theta_0} + 1 )^{-2} } = 1$. Then
\begin{enumerate} [label=(\roman*)]
\item \label{cor:sgd:norm:item1} there exists $\fC \in \R$ such that $\P  \rbr*{  \sup_{n \in \N_0}  \norm{ \Theta_n } \leq \fC  } = 1$,
    \item \label{cor:sgd:norm:item2} it holds that $\P  \rbr*{  \limsup_{n \to \infty} \cL  ( \Theta_n ) = 0  } = 1$, and
    \item \label{cor:sgd:norm:item3} it holds that $\limsup_{n \to \infty} \E [ \cL  ( \Theta_n ) ] = 0$.
\end{enumerate}
\end{cor}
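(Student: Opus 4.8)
The plan is to deduce \cref{cor:sgd:norm} directly from \cref{theorem:sgd}. The three conclusions \cref{cor:sgd:norm:item1,cor:sgd:norm:item2,cor:sgd:norm:item3} are word-for-word the conclusions \cref{theo:sgd:item1,theo:sgd:item2,theo:sgd:item3} of \cref{theorem:sgd}, and $\sum_{n=0}^{\infty}\gamma_n=\infty$ is assumed in both places, so the entire task reduces to producing a single $\delta\in(0,1)$ for which the learning-rate smallness hypothesis of \cref{cor:sgd:norm}, namely $\P(18\bfA^5\gamma_n\le(\norm{\Theta_0}+1)^{-2})=1$ for all $n\in\N_0$, implies the learning-rate smallness hypothesis of \cref{theorem:sgd}, namely $\P(\gamma_n(\bfa^2(d+1)V(\Theta_0)+1)\le\delta)=1$ for all $n\in\N_0$.

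To do this I would first record the elementary consequences of $\bfA=\max\{\bfa,\abs{\xi},d\}$ and $\bfa=\max\{\abs{a},\abs{b},1\}$, namely $\bfA\ge\bfa\ge1$, $\bfA\ge d\ge1$, and $\bfA\ge\abs{\xi}$; in particular $d+1\le 2d\le 2\bfA$ and hence $\bfa^2(d+1)\le 2\bfA^3$. Next I would invoke \cref{prop:lyapunov:norm} (with the $\fd$ and $\xi$ of \cref{setting:sgd}) to get $V(\Theta_0)\le 3\norm{\Theta_0}^2+8\xi^2$, and then use $\xi^2\le\bfA^2$ together with $\bfA\ge1$ to bound $V(\Theta_0)\le 3\norm{\Theta_0}^2+8\bfA^2\le 8\bfA^2(\norm{\Theta_0}^2+1)\le 8\bfA^2(\norm{\Theta_0}+1)^2$. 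Combining these two estimates and using $1\le\bfA^5(\norm{\Theta_0}+1)^2$ yields
\begin{equation}
\bfa^2(d+1)V(\Theta_0)+1\le 2\bfA^3\cdot 8\bfA^2(\norm{\Theta_0}+1)^2+1\le 17\bfA^5(\norm{\Theta_0}+1)^2 .
\end{equation}

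Finally, on the almost sure event $\{18\bfA^5\gamma_n\le(\norm{\Theta_0}+1)^{-2}\}$ one has $\gamma_n\le(18\bfA^5(\norm{\Theta_0}+1)^2)^{-1}$, so multiplying the previous display by $\gamma_n$ gives $\gamma_n(\bfa^2(d+1)V(\Theta_0)+1)\le\tfrac{17}{18}$ almost surely. Thus the hypotheses of \cref{theorem:sgd} are satisfied with $\delta=\tfrac{17}{18}\in(0,1)$, and \cref{theorem:sgd} delivers \cref{cor:sgd:norm:item1,cor:sgd:norm:item2,cor:sgd:norm:item3}. I do not expect a genuine obstacle in this argument; the only point requiring care is the bookkeeping of the numerical constants so that the resulting $\delta$ comes out strictly below $1$, and the exponent $5$ on $\bfA$ together with the constant $18$ in the statement of \cref{cor:sgd:norm} is exactly what makes this succeed, with a little room to spare.
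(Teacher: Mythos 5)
Your proposal is correct and follows essentially the same route as the paper: bound $\bfa^2(d+1)V(\Theta_0)+1$ by $17\bfA^5(\norm{\Theta_0}+1)^2$ via \cref{prop:lyapunov:norm} and the inequalities $\bfA\ge\bfa\ge1$, $\bfA\ge d\ge 1$, $\bfA\ge\abs{\xi}$, and then apply \cref{theorem:sgd} with $\delta=\tfrac{17}{18}$. The only difference is cosmetic bookkeeping of the intermediate constants; the final bound and the choice of $\delta$ coincide with the paper's.
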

\begin{cproof} {cor:sgd:norm}
Observe that \cref{prop:lyapunov:norm} proves that it holds $\P$-a.s.\ that
\begin{equation}
   \bfa^2 (d+1) V ( \Theta _ 0 ) + 1
    \leq 3 \bfa^2 (d+1) \norm{\Theta_0}^2 + 8 \xi  ^2 \bfa^2 (d+1) + 1  .
\end{equation}
The fact that $\bfA \geq d \geq 1$ hence shows that it holds $\P$-a.s.\ that
\begin{equation}
    \bfa^2 (d+1) V ( \Theta _ 0 )  + 1\leq  6 \bfA ^3 \norm{\Theta_0}^2 + 16 \bfA ^5 + 1  \leq 17 \bfA ^5 (\norm{\Theta_0}^2 + 1) \leq 17 \bfA ^5 (\norm{\Theta_0} + 1)^2. 
\end{equation}
This and the assumption that for all $n \in \N_0$ it holds that $\P  \rbr*{ 18 \bfA ^5 \gamma_n  \leq ( \norm{\Theta_0} + 1 )^{-2} } = 1$ ensure that for all $n \in \N_0$ it holds $\P$-a.s.\ that
\begin{equation}
    \gamma_n  \rbr{\bfa^2 (d+1) V ( \Theta _ 0 ) \allowbreak + 1} \leq 17 \bfA ^5 \gamma_n (\norm{\Theta_0} + 1)^2 \leq \tfrac{17}{18} < 1.
\end{equation}
\cref{theorem:sgd} hence establishes items \ref{cor:sgd:norm:item1}, \ref{cor:sgd:norm:item2}, and \ref{cor:sgd:norm:item3}.
\end{cproof}

\subsection{A {\sc Python} code for generalized gradients of the loss functions}
\label{subsection:python:code}

In this subsection we include a short illustrative example {\sc Python} code for the computation of appropriate generalized gradients of the risk function.
In the notation of \cref{setting:sgd} we assume in the {\sc Python} code in \cref{list:python} below that $d=1$, $\width = 3$, $\fd = 10$, $\phi = (-1, 1, 2, 2, -2, 0, 1, -1, 2, 3) \in \R^{10}$, $\xi = 3$, $\omega \in \Omega$, and $X^{1,1}(\omega) = 2$. \Nobs that in this situation it holds that $\w{\phi}_{1,1} X^{1,1} ( \omega ) + \b{\phi}_1 = \w{\phi}_{2,1} X^{1,1} ( \omega ) + \b{\phi}_2=0$. 
\cref{listing:output} presents the output of a call of the {\sc Python} code in \cref{list:python}.
\cref{listing:output} illustrates that the computed generalized partial derivatives of the loss with respect to $\w{\phi}_{1,1}$, $\w{\phi}_{2,1}$, $\b{\phi}_1$, $\b{\phi}_2$, $\v{\phi}_1$, and $\v{\phi}_2$ vanish. 
\Nobs that \cref{eq:def:sgd} and the fact that $\indicator{I_1^\phi} (X^{1,1}(\omega)) = \indicator{I_2^\phi} ( X^{1,1}(\omega) ) = 0$ prove that the generalized partial derivatives of the loss with respect to $\w{\phi}_{1,1}$, $\w{\phi}_{2,1}$, $\b{\phi}_1$, $\b{\phi}_2$, $\v{\phi}_1$, and $\v{\phi}_2$ do vanish.

\begin{lstlisting}[language=Python, caption=Generalized gradients of the loss functions using {\sc TensorFlow}, label=list:python] 
import tensorflow as tf
from tensorflow.python.framework.ops import disable_eager_execution

disable_eager_execution()

# batch size = 1
inputs = tf.compat.v1.placeholder(shape=(1, 1), dtype=tf.float64)
xi = 3

# first layer with constant initialization \R -> \R^3
w = tf.compat.v1.Variable(name='w', initial_value=[[-1., 1., 2.]], dtype=tf.float64, trainable=True)
b = tf.compat.v1.Variable(name='b', initial_value=[2., -2., 0.], dtype=tf.float64, trainable=True)

# second layer with constant initialization \R^3 -> \R
v = tf.compat.v1.Variable(name='v', initial_value=[[1.], [-1.], [2.]], dtype=tf.float64, trainable=True)
c = tf.compat.v1.Variable(name='c', initial_value=[3], dtype=tf.float64, trainable=True)

output = tf.matmul(tf.nn.relu(tf.matmul(inputs, w) + b), v) + c

loss = tf.reduce_mean((output - xi) ** 2)

gradw = tf.compat.v1.gradients(loss, w)
gradb = tf.compat.v1.gradients(loss, b)
gradv = tf.compat.v1.gradients(loss, v)
gradc = tf.compat.v1.gradients(loss, c)

with tf.compat.v1.Session() as sess:

    sess.run(tf.compat.v1.global_variables_initializer())

    gradw = sess.run(gradw, feed_dict={inputs: [[2.]]})
    print('gradient with respect to w: ', gradw)
    gradb = sess.run(gradb, feed_dict={inputs: [[2.]]})
    print('gradient with respect to b: ', gradb)
    gradv = sess.run(gradv, feed_dict={inputs: [[2.]]})
    print('gradient with respect to v: ', gradv)
    gradc = sess.run(gradc, feed_dict={inputs: [[2.]]})
    print('gradient with respect to c: ', gradc)
\end{lstlisting}
\begin{lstlisting} [caption = Result of the {\sc Python} code in \cref{list:python}, label = listing:output]
gradient with respect to w:  [array([[ 0.,  0., 64.]])]
gradient with respect to b:  [array([ 0.,  0., 32.])]
gradient with respect to v:  [array([[ 0.],
       [ 0.],
       [64.]])]
gradient with respect to c:  [array([16.])]
\end{lstlisting}

\subsection*{Acknowledgments}
The authors would like to thank Benno Kuckuck and Sebastian Becker for their helpful assistance and suggestions.
This work has been funded by the Deutsche Forschungsgemeinschaft (DFG, German Research Foundation) under Germany’s Excellence Strategy EXC 2044-390685587, Mathematics Münster: Dynamics-Geometry-Structure.


\end{document}